\theoremstyle{plain}
\newtheorem{prop}{Proposition}[section]
\newtheorem{theorem}[prop]{Theorem}
\newtheorem{thmx}{Theorem}
\newtheorem{lemma}[prop]{Lemma}
\newtheorem{cor}[prop]{Corollary}
\theoremstyle{definition}
\newtheorem{definit}[prop]{Definition}
\theoremstyle{remark}
\newtheorem{remark}[prop]{\textsc{Remark}}
\newcommand{\jac}{\mathcal{J}ac}
\newcommand{\A}{\mathbb{A}}
\newcommand{\E}{\mathcal{E}}
\newcommand{\Gm}{\mathbb G_m}
\newcommand{\N}{\mathbb{N}}
\newcommand{\PP}{\mathbb{P}}
\newcommand{\Sch}{\textup{Sch}}
\newcommand{\Z}{\mathbb{Z}}
\newcommand{\alA}{\mathscr{A}}
\newcommand{\arr}{\longrightarrow}
\newcommand{\arrdi}[1]{\xlongrightarrow{#1}}
\newcommand{\comma}{,\ }
\newcommand{\duale}[1]{{#1}^{\vee}}
\newcommand{\id}{\textup{id}}
\newcommand{\odi}[1]{\mathcal{O}_{#1}}
\newcommand{\sets}{(\textup{sets})}
\newcommand{\shB}{\mathcal{B}}
\newcommand{\shC}{\mathcal{C}}
\newcommand{\shD}{\mathcal{D}}
\newcommand{\shF}{\mathcal{F}}
\newcommand{\shG}{\mathcal{G}}
\newcommand{\shH}{\mathcal{H}}
\newcommand{\shI}{\mathcal{I}}
\newcommand{\shK}{\mathcal{K}}
\newcommand{\shL}{\mathcal{L}}
\newcommand{\shM}{\mathcal{M}}
\newcommand{\shN}{\mathcal{N}}
\newcommand{\shP}{\mathcal{P}}
\newcommand{\shQ}{\mathcal{Q}}
\newcommand{\shR}{\mathcal{R}}
\newcommand{\shS}{\mathcal{S}}
\newcommand{\shT}{\mathcal{T}}
\newcommand{\shU}{\mathcal{U}}
\newcommand{\shW}{\mathcal{W}}
\newcommand{\st}{\ | \ }
\newcommand{\stB}{\mathcal{B}}
\newcommand{\stC}{\mathcal{C}}
\newcommand{\stF}{\mathcal{F}}
\newcommand{\stH}{\mathcal{H}}
\newcommand{\stM}{\mathcal{M}}
\newcommand{\stS}{\mathcal{S}}
\newcommand{\stU}{\mathcal{U}}
\newcommand{\stV}{\mathcal{V}}
\newcommand{\stW}{\mathcal{W}}
\newcommand{\stX}{\mathcal{X}}
\newcommand{\stY}{\mathcal{Y}}
\newcommand{\stZ}{\mathcal{Z}}
\newcommand{\then}{\ \Longrightarrow \ }
\renewcommand{\l}{\textup{l}}
\DeclareMathOperator{\Aut}{\textup{Aut}}
\DeclareMathOperator{\Autsh}{\underline{\textup{Aut}}}
\DeclareMathOperator{\Bi}{\textup{B}}
\DeclareMathOperator{\Coker}{\textup{Coker}}
\DeclareMathOperator{\GL}{\textup{GL}}
\DeclareMathOperator{\Hilb}{\textup{Hilb}}
\DeclareMathOperator{\Hl}{\textup{H}}
\DeclareMathOperator{\Hom}{\textup{Hom}}
\DeclareMathOperator{\Homsh}{\underline{\textup{Hom}}}
\DeclareMathOperator{\Imm}{\textup{Im}}
\DeclareMathOperator{\Iso}{\textup{Iso}}
\DeclareMathOperator{\Isosh}{\underline{\textup{Iso}}}
\DeclareMathOperator{\Ker}{\textup{Ker}}
\DeclareMathOperator{\PGL}{\textup{PGL}}
\DeclareMathOperator{\Pic}{\textup{Pic}}
\DeclareMathOperator{\Picsh}{\underline{\textup{Pic}}}
\DeclareMathOperator{\Proj}{\textup{Proj}}
\DeclareMathOperator{\R}{\textup{R}}
\DeclareMathOperator{\Spec}{\textup{Spec}}
\DeclareMathOperator{\car}{\textup{char}}
\DeclareMathOperator{\pr}{\textup{pr}}
\DeclareMathOperator{\rk}{\textup{rk}}
\DeclareMathOperator{\tr}{\textup{tr}}
\begin{document}

\author[Poma]{Flavia Poma}
\email{flavia.poma@sns.it}
\address[Poma]{}

\author[Talpo]{Mattia Talpo}
\email{talpuz@mpim-bonn.mpg.de}
\address[Talpo]{Max Planck Institute for Mathematics\\Vivatsgasse 7\\
53111 Bonn\\Germany}

\author[Tonini]{Fabio Tonini}
\email{tonini@mathematik.hu-berlin.de}
\address[Tonini]{Humboldt University of Berlin\\Unter den Linden 6\\
10099 Berlin\\Germany}

\keywords{cyclic cover, Picard group, curve, moduli}

\title{Stacks of uniform cyclic covers of curves\\
and their Picard groups}
\date{}


\thanks{The second author was supported by the Max Planck Institute for Mathematics of Bonn. The third author was supported by the 
project SFB 647: Space-Time-Matter. Analytic and Geometric Structures.}

\begin{abstract}
We study the stack $\stB_{h,g,n}$ of uniform cyclic covers of degree $n$ between smooth curves of genus $h$ and $g$
and, for $h \gg g$, present it as an open substack
of a vector bundle over the universal Jacobian stack of $\stM_g$. 
We use this description to compute the integral Picard group of $\stB_{h,g,n}$, showing that it is generated 
by tautological classes of $\stB_{h,g,n}$.
\end{abstract}
\maketitle

\setcounter{tocdepth}{1}

\section*{Introduction}
Let $k$ be a field and $h,g,n$ be non negative integers with $n\geq 2$.
We denote by $\stB_{h,g,n}$ the stack over $k$ of triples $(D\arr S,C\arr S,f)$ where
$D\arr S$ is a smooth, geometrically connected genus $h$ curve, $C\arr S$ is a smooth, geometrically connected genus $g$ curve 
and $f\colon D\arr C$ is a uniform cyclic cover of degree $n$
(see Section \ref{sec: uniform covers} for a definition of uniform cyclic covers).
The aim of this paper is to describe the structure of $\stB_{h,g,n}$ and compute its integral Picard group.

This work was inspired by the results in \cite{Arsie2004} and
\cite{Bolognesi2009}, where the authors compute the Picard group of 
similar moduli problems, namely
the stack of uniform cyclic covers of projective spaces and of triple covers of curves of genus zero respectively.
However the methods used here are different,
as we do not use a presentation of $\stB_{h,g,n}$ as a quotient stack.
Another source of inspiration and, in fact, the starting point of our computation in genus one
was the classical result of Mumford about the Picard
group of the stack $\stM_{1,1}$ of elliptic curves (see \cite{Mumford1963} and also \cite{Fulton2010}).

This paper was born as a study of double covers of genus one curves, that is of the stacks $\stB_{h,1,2}$
(which also explains the use of the letter $\stB$ which stands for 'bielliptic').
The main obstacle in generalizing the results for $g\geq 2$ was the computation of the Picard group
of the universal Jacobian of $\stM_g$ (see below for a definition), since the methods we used for the 
same problem in genus one fail in higher genera. This last problem was solved in \cite{melo-viviani},
allowing the generalization for higher genera.

Let us also remark that the case $n=2$ is the most interesting from a ``geometric'' point of view, since (in characteristic different from $2$) all covers of degree $2$ are uniform cyclic and, therefore, $\stB_{h,g,2}$ is the stack of double covers between smooth curves of genera $h$ and $g$.

The Picard group of $\stB_{h,0,n}$ was already computed in \cite[Theorem 5.1]{Arsie2004}.
Here the authors introduce moduli stacks of uniform cyclic covers of projective spaces, denoted by
$\stH_{\text{sm}}(r,n,d)$ for $r,n,d>0$. In the one dimensional case $r=1$ we have $\stH_{\text{sm}}(1,n,d)=\stB_{h,0,n}$,
where $d,n,h$ are related by the expression (\ref{formula for d}) below. In this paper we provide an alternative method
for the computation of $\Pic \stB_{h,0,n}$ which extends to higher genera.

In \cite{Pagani2013} the author introduces moduli stacks of abelian covers of curves, which are
related to our stacks $\stB_{h,g,n}$ in the cyclic, totally ramified case. Let
$\stY_{g,r,n}$ be the stack of tuples $(D,C,f,\sigma_1,\dots,\sigma_r)$
where $(C,\sigma_1,\dots,\sigma_r)$ is a $r$-pointed curve of genus $g$ and $f\colon D\arr C$ is a uniform cyclic cover
of degree $n$ whose ramification locus is the union of the sections $\sigma_1,\dots,\sigma_r$.
By forgetting the sections we obtain a functor $\stY_{g,nd,n}\arr \stB_{h,g,n}$, where $d,g,h,n$ are related by the expression
(\ref{formula for d}) below, which is a $S_{nd}$-torsor.
In \cite[Theorem 3]{Pagani2013} is proved that $\stY_{g,nd,n}$, which is denoted by $\stM_{1,nd}(\Bi (\Z/n\Z),(1),\dots,(1))$, has trivial rational Picard group for $nd>0$, which also implies
the vanishing of the rational Picard group of $\stB_{h,1,n}$. In this paper we recover this last result by
explicitly describing the integral Picard group of $\stB_{h,1,n}$, but we can not directly deduce the result in \cite[Theorem 3]{Pagani2013}.

The main result of this paper is the following.
\begin{thmx}\label{main theorem}
Let $h,g,n$ be non negative integers with $n\geq 2$ and set 
\begin{align}\label{formula for d}
d=2\frac{h+n(1-g)-1}{n(n-1)}\text{ so that } h=1+n(g-1)+\frac{n(n-1)}{2} d
\end{align}
The stack $\stB_{h,g,n}$ is not empty if and only if $d\in \N$. Assume $d\in \N$. The stack $\stB_{h,g,n}$ is algebraic and of finite type over $k$ and, if $nd>2g-2$ or $\car k \nmid n$, the forgetful functor $\stB_{h,g,n}\arr \stM_g$ is smooth and surjective.

Let $\pi\colon \shC\arr \stB_{h,g,n}$ be the universal genus $g$ curve,
$f\colon \shD\arr \shC$ be the universal uniform cyclic cover of
degree $n$ and $\shL$ be the dual of the degree $1$ part of 
the $\mu_n$-equivariant sheaf $f_*\odi \shD$. The sheaf $\shL$ is invertible of degree $d$ over $\shC$ and we have the following.
\begin{enumerate}
 \item \label{main theorem genus zero} If $g=0$ we have
 \begin{displaymath}
  \Pic \stB_{h,0,n} \simeq \begin{cases}
                          \Z/2n(nd-1)\Z &\text{ generated by } \pi_*(\shL \otimes \omega_\pi^{d/2}) \text{ if } d \text{ is even}\\
                          \Z/n(nd-1)\Z &\text{ generated by } \det \pi_*(\shL \otimes \omega_\pi^{(d-1)/2}) \text{ if } d \text{ is odd}
                         \end{cases}
 \end{displaymath}
 \item \label{main theorem genus one big h} If $g=1$ then $\Pic \stB_{h,1,n}$ is generated by $\pi_*\omega_\pi$ and $\det \pi_* \shL$ with relations
 \begin{align*}
(\pi_*\omega_\pi)^4 &\text{ and } \det \pi_*\shL &\text{ if } h=1\comma n=2\comma \car k \nmid 6 \\
(\pi_*\omega_\pi)^6 &\text{ and } (\pi_*\shL)^2\otimes (\pi_*\omega_\pi)^{-2} & \text{ if } h=n=2\comma \car k \neq 2 \\
(\pi_*\omega_\pi)^{12} &\text{ and } (\det \pi_*\shL)^{2n^2}\otimes (\pi_*\omega_\pi)^{n(dn+d-2n)} &\text{ if } nd>2\comma \car k \nmid nd
\end{align*}
As an abtract group we have
      \[
 \Pic \stB_{h,1,n} \simeq \begin{cases}
 				\Z/4\Z & \text{ if } h=1\comma n=2\comma \car k \nmid 3 \\
 				\Z/3\Z \times \Z/2\Z \times \Z/2\Z & \text{ if } h=n=2 \\
 			        \Z/3\Z \times \Z/4\Z\times \Z/2n^2\Z & \text{ if } nd>2\comma \frac{n(dn+d-2n)}{2} \text{ is even}\comma \car k \nmid nd\\
				\Z/3\Z \times \Z/2\Z\times \Z/4n^2\Z & \text{ if } nd>2\comma \frac{n(dn+d-2n)}{2} \text{ is odd}\comma \car k \nmid nd
                          \end{cases}
 \]
 \item \label{main theorem genus big} Assume that $k$ is algebraically closed of characteristic $0$ and either $nd>2g-2$ and $g\geq 4$ or $nd>2g-1$ and $g\geq 3$ or $nd>2g$ and $g\geq 2$. Then $\Pic \stB_{h,g,n}$ is generated by
 $\det \pi_* \omega_\pi$, $d_\pi(\shL)$ and $\det \pi_*(\shL\otimes \omega_\pi)$ (see \ref{notation of picard universal jacobian big genus}
 for a defintion of $d_\pi(-)$) with the only relation
 \[
 (\det \pi_*\omega_\pi)^{-2n^2} \otimes d_\pi(\shL)^{n(n-1)} \otimes (\det \pi_*(\shL\otimes \omega_\pi))^{n(n+1)}
 \]
except for $g=2$, for which we need to add the relation $(\det \pi_* \omega_\pi)^{10}$.

As an abstract group we have
 \[
 \Pic \stB_{h,g,n} \simeq \begin{cases}\Z/2n\Z \times \Z/10\Z \times \Z  & \text{ if } g=2 \text{ and } n \text{ is odd}\\
				    \Z/n\Z \times \Z/10\Z \times \Z  & \text{ if } g=2 \text{ and } n \text{ is even}\\
				    \Z/2n\Z \times \Z^2  & \text{ if } g>2 \text{ and } n \text{ is odd}\\
				    \Z/n\Z \times \Z^2  & \text{ if } g>2 \text{ and } n \text{ is even}
                                   \end{cases}
 \]
\end{enumerate}
\end{thmx}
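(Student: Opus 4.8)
The plan is to give a purely geometric presentation of $\stB_{h,g,n}$ as an open substack of a vector bundle over the universal Jacobian of $\stM_g$, and then to compute Picard groups by excision.

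\emph{Step 1: geometric presentation.} By the structure theory of uniform cyclic covers recalled in Section~\ref{sec: uniform covers}, giving a uniform cyclic cover $f\colon D\to C$ of degree $n$ over $S$ is the same as giving a line bundle $\shL$ on $C$ of relative degree $d$ and a section of $\shL^{\otimes n}$ whose vanishing divisor $B\subseteq C$ is $S$-smooth (in particular relatively reduced) and such that the associated cover is connected; here $\shL$ is the dual of the degree-one part of $f_*\odi\shD$ as in the statement, $\shO(B)\cong\shL^{\otimes n}$, and Riemann--Hurwitz translates this into relation (\ref{formula for d}), so that $\stB_{h,g,n}$ is nonempty precisely when $d\in\N$. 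Thus $\stB_{h,g,n}$ is the open substack, inside the total space over the universal Picard stack $\jac^d_g$ of pairs $(C,\shL)$ (with $C$ smooth of genus $g$ and $\deg\shL=d$) of the coherent sheaf $\pi_*\shL^{\otimes n}$, where $B$ is smooth and the cover is connected --- connectedness being automatic on the reduced locus when $d>0$, and for $d=0$ cutting out an open-and-closed piece of the relative $n$-torsion. When $nd>2g-2$ one has $R^1\pi_*\shL^{\otimes n}=0$, so this is the total space of a genuine vector bundle of rank $nd+1-g$ and $\shL^{\otimes n}$ is positive enough for the general section to be reduced; together with the tame \'etale case $d=0$ when $\car k\nmid n$, this yields algebraicity, finiteness of type over $k$, and smoothness and surjectivity of $\stB_{h,g,n}\to\stM_g$.

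\emph{Step 2: reduction to the Jacobian.} The total space of a vector bundle over an algebraic stack $\stX$ has Picard group $\Pic\stX$; removing a closed substack of codimension $\ge 2$ does not change the Picard group, and removing a divisor $\Delta$ imposes exactly the relation $[\shO(\Delta)]=0$. Hence
\[\bigoplus_i\Z\cdot[\shO(\Delta_i)]\arr\Pic\jac^d_g\arr\Pic\stB_{h,g,n}\arr 0,\]
with the $\Delta_i$ the codimension-one components of the complement of $\stB_{h,g,n}$. In the ranges allowed by the hypotheses this complement is the discriminant locus of relative divisors acquiring a multiple point, which is irreducible of pure codimension one, so there is a single relation; its class is read off from the fact that the discriminant of a degree-$m$ relative divisor has weight $m(m-1)$ (with $m=nd$), which rewritten in the chosen tautological generators gives the displayed numerical relation in each case. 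The leftover relations --- $(\pi_*\omega_\pi)^{12}$ for $g=1$, $(\det\pi_*\omega_\pi)^{10}$ for $g=2$ --- come from $\Pic\stM_{1,1}\cong\Z/12\Z$ and $\Pic\stM_2\cong\Z/10\Z$ pulled back along $\jac^d_g\to\stM_g$, and the degenerate small cases ($h=1$, $n=2$, which is \'etale so carries no discriminant, and $h=n=2$, $d=1$) are handled by a direct analysis of the complement.

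\emph{Step 3: the Picard group of the universal Jacobian.} For $g=0$ one has $\stM_0=B\PGL_2$ and $\jac^d_0$ is the classifying stack of a linear group, so $\Pic\jac^d_0\cong\Z$ and the discriminant relation are obtained by equivariant methods, the parity of $d$ deciding which power of $\det$ is a generator; this re-proves \cite[Theorem~5.1]{Arsie2004}. For $g=1$ there is no ready reference, so we compute the Picard group of the universal Jacobian of $\stM_{1,1}$ directly, in the manner of Mumford's computation of $\Pic\stM_{1,1}$, treating separately $d=0$ ($h=1$, $n=2$), $d=1$ ($h=n=2$) and $nd>2$, under the characteristic hypotheses making $\Pic\stM_{1,1}\cong\Z/12\Z$. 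For $g\ge 2$ we invoke \cite{melo-viviani}, which presents $\Pic\jac^d_g$ with generators $\det\pi_*\omega_\pi$, $d_\pi(\shL)$, $\det\pi_*(\shL\otimes\omega_\pi)$ (plus a $\Z/10\Z$ in genus $2$); the hypotheses there --- $k$ algebraically closed of characteristic $0$ and $nd$ above the Brill--Noether threshold relevant in genus $2$ and $3$ --- are precisely what make the bundle of Step~1 honest and the discriminant an irreducible divisor. Feeding this into the exact sequence of Step~2 and, for $g\ge 2$, computing the cokernel yields both the generators-and-relations and the abstract group descriptions.

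\emph{Main obstacle.} The crux is the genus-one case: with no black box available for the Picard group of the universal Jacobian, one must carry out a Mumford-style computation with all of its subcases and small-characteristic pathologies. A secondary but pervasive difficulty is the twist bookkeeping --- pinning down the exact class in $\Pic\jac^d_g$ of the discriminant divisor and of each chosen generator ($\pi_*(\shL\otimes\omega_\pi^{d/2})$, $\det\pi_*\shL$, $d_\pi(\shL)$, $\det\pi_*(\shL\otimes\omega_\pi)$, and so on) so that the exact sequence outputs exactly the stated generators and exponents --- and, more routinely, checking the irreducibility and codimension of the complement in the boundary ranges of the hypotheses. Everything else follows formally once the presentation of Step~1 is in place.
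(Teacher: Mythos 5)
Your overall architecture coincides with the paper's: realize $\stB_{h,g,n}$ (for $d>0$, $nd>2g-2$) as the complement of the discriminant locus inside the complement of the zero section of the vector bundle $\pi_*(\shL^n)$ over $\jac_{d,g}$, and compute $\Pic$ by excision, feeding in $\Pic\jac_{d,g}$ from \cite{melo-viviani} for $g\geq 2$. But the argument as written has genuine gaps exactly at the points you defer. Excision (\ref{relations picard group smooth stacks}) only says that the kernel of $\Pic\jac_{d,g}\arr\Pic\stB_{h,g,n}$ is generated by the classes of the \emph{integral} components of the complement; so you must prove that $\stZ_{d,g,n}$ is reduced (otherwise the relation you impose could be a proper root of the discriminant class) and irreducible, and neither is routine. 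The paper's reducedness proof (\ref{reducedness of the discriminant locus over Hilbert scheme}) reduces via \ref{from two to one} to the universal cover of $\Hilb^{nd}_{\stM_{g,1}/\stM_g}$ and is an explicit local length-two computation comparing $C^n$ with the symmetric product; irreducibility in the borderline ranges $nd=2g$, $g\geq 3$ and $nd=2g-1$, $g\geq 4$ needs the principal-parts bundle argument and the dimension counts in the proof of \ref{key proposition for the bad locus}, and it genuinely fails for $(g,d,n)=(1,1,2)$, where $\stZ_{1,1,2}$ has two components and hence two relations. Likewise, ``the discriminant of a degree-$m$ divisor has weight $m(m-1)$'' is a heuristic, not an identification of the class in $\Pic\jac_{d,g}$: the paper gets the precise class $(\det\pi_*(\shL^n\otimes\omega_\pi))^{2}\otimes(\det\pi_*\omega_\pi)^{-2}$ by pushing forward $0\arr\shL^{-n}\arr\odi\shC\arr\odi{\stH_{d,g,n}}\arr 0$ and taking determinants, and without that you cannot produce the stated exponents.

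The second gap is the genus-one input: you name $\Pic\jac_{d,1}$ as the crux but propose no method beyond ``a Mumford-style computation.'' The paper's key idea here is structural: $\jac_{d,1}\simeq\Bi_{\stM_{1,1}}G_d$, where $G_d$ sits in an extension $1\arr\Gm\arr G_d\arr\E[d]\arr 1$ (a Theta group in Mumford's sense), so $\Pic\jac_{d,1}\simeq\Pic\stM_{1,1}\oplus\duale G_d$, and $\duale G_d=d\,\duale\Gm$ follows from the non-degeneracy of the commutator (Weil) pairing together with the fact that $\E[d]\arr\stM_{1,1}$ admits no nonzero section; the translation into the generators $\pi_*\omega_\pi$, $\det\pi_*\shL$ then requires the explicit computations of \ref{computation of line bundles on the universal elliptic curve} and \ref{computation of det pi star L to the n}. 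Finally, $\stB_{1,1,2}$ ($d=0$) does not fit your framework at all: $\stU_{0,1,2}$ is not the complement of a zero section of a vector bundle and the covers are \'etale $\mu_2$-torsors, so there is no discriminant divisor and no ``complement'' to analyze; the paper needs a separate argument (the stack $\stF\subseteq\widetilde\stM_{1,2}[2]$, the group $G_0$, and a $\Gm$-torsor computation) to obtain $\Pic\stB_{1,1,2}\simeq\Z/4\Z$.
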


When $h=n(g-1)+1$ (in particular $g\geq 1$),
that is $d=0$, uniform cyclic covers of degree $n$ become $\mu_n$-torsors. This case is not covered by the above theorem, except for
 $h=g=1$ and $n=2$ where $\Pic \stB_{1,1,2}$ is computed by an ad hoc variation of the methods used in higher genera and degrees.
 When $d>0$ uniform cyclic covers are never \'etale and the stacks $\stB_{h,g,n}$ share a common description that we are now going to explain.
 
 Fix $g\geq 0$, $d,n>0$ and assume $\car k \nmid n$ or $g=0$ and, if $g=1$, $\car k \nmid d$. 
 Set $\stU_{d,g,n}$ for the stack of triples $(C\arr S,\shQ,s)$ where $C$ is a smooth, geometrically connected curve of genus $g$ and $\shQ$ 
is a degree $d$ invertible sheaf with a section $s\in \shQ^n$ that is not identically zero on any of the geometric fibers of $C\arr S$.
The forgetful map $\stU_{d,g,n}\arr \stM_g$ defines the (universal) genus $g$ curve $\pi\colon \shC \arr \stU_{d,g,n}$ together with
an invertible sheaf $\shL$ of degree $d$ on $\shC$ and a section $t\in \shL^n$. The zero locus $\stH_{d,g,n}$ of $t$ in $\shC$ is a degree $nd$ cover
of $\stU_{d,g,n}$ and $\stB_{h,g,n}$, where $d,h,g,n$ are related by the expression (\ref{formula for d}) above, can be identified with 
the \'etale locus of $\stH_{d,g,n}\arr \stU_{d,g,n}$ inside $\stU_{d,g,n}$ (see \ref{relation between Bhg and Udtwo}).
Since $\stU_{d,g,n}$ is smooth and algebraic, the problem of computing
$\Pic \stB_{h,g,n}$ splits in two parts: compute $\Pic \stU_{d,g,n}$ and describe the complement of $\stB_{h,g,n}$ in $\stU_{d,g,n}$.

Denote by $\jac_{d,g}$ the stack of pairs $(C\arr S,\shQ)$ where $C$ is a smooth, geometrically connected 
genus $g$ curve and $\shQ$ is a degree $d$ invertible sheaf on $C$.
This is the so called universal Jacobian of degree $d$ on $\stM_g$. Assume $nd>2g-2$. The forgetful functor $\stU_{d,g,n}\arr \jac_{d,g}$ makes $\stU_{d,g,n}$ into the complement of the zero section of a vector bundle of rank
$nd+1-g$ over $\jac_{d,g}$ (this description is no longer true in general when $nd\leq 2g-2$ and this is why Theorem
\ref{main theorem} does not cover this case). When $n\geq 2$ and since $nd+1-g\geq 2$ we can conclude that $\Pic \stU_{d,g,n}\simeq \Pic \jac_{d,g}$.
When $g\geq 2$ the group $\Pic \jac_{d,g}$ has been computed in \cite{melo-viviani}. If $g=0$ then $\Pic \jac_{d,0} \simeq \Z$ (see \ref{picard jacobian genus zero}).
The case $g=1$ is harder than the case $g=0$ and our treatment differs from the methods used in \cite{melo-viviani} for $g\geq 2$.
The result is that, if $g=1$, then $\Pic \jac_{d,1}\simeq \Z/12 \times \Z$ (see \ref{picard group of Xd}) and it has been obtained by
proving that the functor $\jac_{d,1} \arr \stM_{1,1}$ that maps $(E,\shQ)$ to $(\Picsh^0_E,[\odi E])$ is a trivial gerbe, that is
$\jac_{d,1} \simeq \Bi_{\stM_{1,1}}G_d$ over $\stM_{1,1}$, where $G_d$ is a smooth affine group over $\stM_{1,1}$, and by computing the group
of characters of $G_d$. The geometric fibers
of $G_d\arr \stM_{1,1}$ are particular cases of Theta groups,
first defined by Mumford in his paper \cite{Mumford1966} (see \ref{properties of Gd}).

The last part in the computation of $\Pic \stB_{h,g,n}$ is the description of $\stU_{d,g,n}-\stB_{h,g,n}$. Topologically
this closed substack coincides with the discriminant locus $\stZ_{d,g,n}$ of the cover $\stH_{d,g,n}\arr \stU_{d,g,n}$. By standard theory of covers
the locus $\stZ_{d,g,n}$ can be described as the zero locus of a section (the discriminant section) of an invertible sheaf of $\stU_{d,g,n}$. The key
point for the computation of $\Pic \stB_{h,g,n}$ is that $\stZ_{d,g,n}$ is reduced and, if $nd>2g$ or $nd>2g-1$ and $g\geq 3$ or $nd>2g-2$ and $g\geq 4$, irreducible (see \ref{key proposition for the bad locus}).
In the special case $g=1$, $h=n=2$ (so that
$d=1$ and $nd=2g$) ad hoc methods show that $\stZ_{1,1,2}$ is a disjoint union of two irreducible components, allowing the computation of
$\Pic \stB_{2,1,2}$ (see \ref{key proposition for the bad locus}). It is not clear whether $\stZ_{d,g,n}$ is irreducible for the remaining values of $h,g,n$. The geometry of the loci $\stZ_{d,g,n}$ is studied by reducing to the case $n=1$, showing that $\stU_{d,g,1}\simeq \Hilb^d_{\stM_{g,1}/\stM_g}$ 
(see section Notations) and that $\stH_{d,g,1}\arr \Hilb^d_{\stM_{g,1}/\stM_g}$ is the universal cover.

We remark that Theorem \ref{main theorem} is obtained by expressing $\Pic \stB_{h,g,n}$ as quotient of $\Pic \jac_{d,g}$ by a given relation and this description holds more generally than in the hypothesis of Theorem \ref{main theorem} (see \ref{general prop for main theorem} for a precise statement). For instance if $g\geq 2$ and $\car k \nmid n$ the knowledge of the integral (resp. rational) Picard group of $\jac_{d,g}$ implies the knowledge of the integral (resp. rational) Picard group of $\stB_{h,g,n}$. Unfortunately if $g\geq 2$ both the integral and rational Picard groups of $\jac_{d,g}$ are known only in characteristic $0$, although it seems reasonable to expect the same description for all but finitely many characteristics. See \cite[Remark 1.4]{melo-viviani} for a discussion on the subject. 

The paper is organized as follows.
In Section \ref{Preliminaries} we collect useful remarks and lemmas, while in Section \ref{Stack Xd}
we study the Picard group of the univeral Jacobian $\jac_{d,g}$ over $\stM_g$ for $g=0$ and $g=1$ and explain
the results for $g\geq 2$ obtained in \cite{melo-viviani}.
In Section \ref{Canonical covers and their discriminant loci} we introduce the
canonical covers $\stH_{d,g,n}\arr \stU_{d,g,n}$ and describe their discriminant loci, while in Section 
\ref{sec: uniform covers} we introduce the stacks $\stB_{h,g,n}$ and compute their Picard groups.

\subsection*{Notations}
Given a base scheme $S$, by the words ``scheme'' or ``stack'' we always mean scheme or stack defined over this base scheme.
Moreover by stacks we always mean a category fibered in groupoids which is a stack for the fppf topology.
Let $\stS$ be a stack.

A geometric point of $\stS$ is a map $\Spec k\arr \stS$, where $k$ is an algebraically closed field.

Given a sheaf of groups $G\colon (\Sch/\stS)^{\text{op}}\arr (\text{grps})$ we denote by $\duale G=\Hom_\stS(G,\Gm)$ the group of characters of $G$
over $\stS$, i.e. group homomorphisms $G\arr \Gm$.

By a cover we mean an affine map $\stX\arrdi f \stS$ such that $f_*\odi\stX$ is locally free of finite rank. Alternatively, a cover is
a finite, flat and finitely presented map $\stX\arrdi f \stS$. The degree of $f$ is the rank of $f_*\odi \stX$.
 The discriminant section $s_f\in \det (f_*\odi \stX)^{-2}$ of $f$ is the determinant of the map
$$
 f_*\odi \stX \arr \duale {f_*\odi \stX}\comma x\longmapsto \tr_{f_*\odi \stX}(x\cdot -)
$$
where $\tr$ denotes the trace map. The discriminant locus of $f$ is the zero locus in $\stS$ of $s_f$.
The discriminant section is stable under base change and the complement of the discriminant locus of $f$ is the \'etale locus of $f$ in $\stS$.

A genus $g$ curve over $\stS$ is a representable (by algebraic spaces), proper and smooth map $\stC\arr\stS$ of stacks
whose geometric fibers are connected genus $g$ curves. Let $\pi\colon \stC\arr \stS$ be a genus $g$ curve.
We say that an invertible sheaf $\shL$ on $\shC$ has degree $d\in \Z$ if the pullback of $\shL$ on every geometric fiber 
of $\pi\colon \stC\arr \stS$ has degree $d$. We denote by
$\omega_\pi$ the relative dualizing sheaf, which is an invertible sheaf on $\shC$ of degree $2g-2$.

We denote by $\stM_g$ the stack
of genus $g$ curves and by $\stM_{g,1}$ the stack of genus $g$ curves with a section.
The forgetful functor $\stM_{g,1}\arr \stM_g$ is a genus $g$ curve, called the universal genus $g$ curve of $\stM_g$.
If it is given a map $\stS\arr\stM_g$, the universal genus $g$ curve of $\stS$ is the base change $\stS\times_{\stM_g}\stM_{g,1}\arr\stS$.

Let $\stX$ be another stack and $f\colon \stX\arr\stS$ be a representable map.
We denote by $\Hilb^{n}_{\stX/\stS}$ (or simply $\Hilb^{n}_\stX$ whenever $\stS$ is clear from the context) the stack over $\stS$ whose
objects over $S\arr\stS$ are closed subschemes $Z \subseteq \stX\times_\stS S$ such that the projection $Z\arr S$ is a degree $n$ cover. When $f\colon \stX\arr \stS$ is a projective map of schemes then $\Hilb^n_{\stX/\stS}$ is the usual Hilbert scheme of points.


We denote by $\Picsh_{\stX/\stS}$ (or simply $\Picsh_\stX$ whenever $\stS$ is clear from the context) the stack over $\stS$ which is the 
fppf sheafification of the functor $(\Sch/\stS)^{\text{op}}\arr \sets$ that maps $S\arr\stS$ to the set $\Pic(\stX\times_\stS S)$.
If $f\colon \stX\arr\stS$
is a curve, we also denote by $\Picsh^d_{\stX/\stS}$ (or simply $\Picsh^d_\stX$) the substack of $\Picsh_{\stX/\stS}$ of classes that
are locally given by invertible sheaves having degree $d$ on the geometric fibers of $f$. Given $n\in \Z$ we will denote by $[n]\colon \Picsh_{\stX/\stS} \arr \Picsh_{\stX/\stS}$ (or $[n]\colon \Picsh^d_{\stX/\stS}\arr \Picsh^{dn}_{\stX/\stS}$ if $\stX$ is a curve over $\stS$) the map induced by the multiplication by $n$.

The formation of $\Hilb^{n}_{\stX/\stS}$, $\Picsh_{\stX/\stS}$ and $\Picsh^d_{\stX/\stS}$ commute with arbitrary
base change of the base $\stS$. In particular, if $\stX\arr\stS$ is a curve, the stacks $\Hilb^d_{\stX/\stS}$ and $\Picsh^d_{\stX/\stS}$ for
$d\in \Z$ are smooth over $\stS$.

If $\stX$ is an algebraic stack we will denote by $|\stX|$ the topological space associated with $\stX$.

Almost all the stacks $\stX$ that we will introduce have a given map to $\stM_g$, for some given $g$,
and therefore have a given genus $g$ curve over it, that, as
remarked above, we will call the universal curve over (or of) $\stX$. With abuse of notation
we will usually denote them by the same symbol $\shC$ for the total space and $\pi$ for the structure map, that is $\pi\colon \shC\arr\stX$,
but with the convention that this notation is fixed and remains coherent inside the statement of a lemma, proposition, theorem ... and its proof.
The use of different symbols for such curves seemed to us not practical, while the use of subscripts
would have encumbered the notation too much. Moreover this notation is supported by the idea that genus $g$ curves can be seen as restriction
of the universal curve over $\stM_g$. Indeed, 
if $\pi\colon \shC\arr\stM_g$ is the universal curve and $q\colon \stX\arr\stM_g$ is a map, $\shC$ can be seen as
the functor $F\colon (\Sch/\stM_g)^{\text{op}}\arr\sets$ which maps a genus $g$ curve $C$ over a scheme $S$ to the 
set of sections $C(S)$, while $q$ corresponds to a map $\Sch/\stX\arr \Sch/\stM_g$. The universal curve
$\shC\times_{\stM_g} \stX\arr\stX$ of $\stX$ then corresponds to the restriction of the functor $F$ along the map $\Sch/\stX\arr\Sch/\stM_g$.

\subsection*{Acknowledgements}
We would like to thank our advisor Angelo Vistoli for suggesting the problem and for sharing his ideas.

We also thank Michele Bolognesi for directing us to Mumford's papers about abelian varieties,
along with Dajano Tossici, Matthieu Romagny, Nicola Pagani, Filippo Viviani and Margarida Melo for useful conversations.

Finally we thank the referee for useful comments and corrections.




\section{Preliminaries}\label{Preliminaries}

In this section we collect some general results and remarks that will be useful in the next sections.
These results are well known, but for some of them we decided to include a proof for
completeness and lack of exhaustive references.
In this section we consider $\Spec \Z$ as the base scheme.

\begin{prop}\label{cokernel of a geometrically injective map of flat sheaves}
 Let $f\colon \stX\arr \stY$ be a locally finitely presented map of algebraic stacks and $\alpha\colon \shF\arr \shH$ be a map of finitely presented quasi-coherent sheaves on $\stX$. If $\shH$ is flat over $\stY$ then $\alpha$ is injective on the geometric fibers of $f$ if and only if $\alpha$ is injective and $\Coker \alpha$ is flat over $\stY$. In this case $\alpha$ remains injective after any base change $\stY'\arr \stY$ from an algebraic stack.
\end{prop}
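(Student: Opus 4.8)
The plan is to reduce to commutative algebra, dispose of the ``if'' direction and the base-change assertion by a one-line $\Tor$ argument, and reduce the ``only if'' direction to the local criterion for flatness.

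\emph{Reduction to the affine case.} All of ``$\alpha$ injective'', ``$\Coker\alpha$ flat over $\stY$'', ``$\alpha$ injective on the geometric fibres of $f$'' and ``injectivity of $\alpha$ preserved by base change'' can be checked after pulling back along a smooth surjection $V\arr\stY$ from a scheme and then along a smooth surjection from a scheme onto $\stX\times_\stY V$. So I would assume $\stY=\Spec A$, $\stX=\Spec B$, with $A\arr B$ of finite presentation, and $\shF$, $\shH$ given by finitely presented $B$-modules $F$, $H$ with $H$ flat over $A$. Since $\kappa(\mathfrak p)\arr\overline{\kappa(\mathfrak p)}$ is faithfully flat, $\alpha$ is injective on the geometric fibre over $\mathfrak p\in\Spec A$ if and only if $\alpha\otimes_A\kappa(\mathfrak p)$ is injective, so the hypothesis becomes: $\alpha\otimes_A\kappa(\mathfrak p)$ is injective for every $\mathfrak p\in\Spec A$.

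\emph{The easy direction and base change.} If $\alpha$ is injective with $Q:=\Coker\alpha$ flat over $A$, then from $0\arr F\arr H\arr Q\arr 0$ one has $\Tor_1^A(Q,A')=0$ for every $A$-algebra $A'$, so the sequence stays exact after $-\otimes_A A'$; taking $A'=\kappa(\mathfrak p)$ gives fibrewise injectivity, and taking $A'$ arbitrary gives the last sentence of the statement, which then globalises to any algebraic stack $\stY'\arr\stY$ by the same smooth-local reduction.

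\emph{The substantive direction.} For the converse, put $F'=\Imm\alpha$ and $Q=H/F'$. As $H$ is $A$-flat, the $\Tor$-sequence of $0\arr F'\arr H\arr Q\arr 0$ identifies $\Tor_1^A(Q,\kappa(\mathfrak p))$ with $\Ker\bigl(F'\otimes_A\kappa(\mathfrak p)\arr H\otimes_A\kappa(\mathfrak p)\bigr)$; since $F\otimes_A\kappa(\mathfrak p)$ surjects onto $F'\otimes_A\kappa(\mathfrak p)$ while the composite $F\otimes_A\kappa(\mathfrak p)\arr H\otimes_A\kappa(\mathfrak p)$ equals $\alpha\otimes_A\kappa(\mathfrak p)$, which is injective, that kernel vanishes. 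Hence $\Tor_1^A(Q,\kappa(\mathfrak p))=0$ for all $\mathfrak p$, and the local criterion for flatness (this is essentially \textup{EGA} \textup{IV}, 11.3.7; see also the Stacks Project's treatment of universally injective maps of modules) promotes this to flatness of $Q$ over $A$. Once $Q$ is flat so is $F'=\Ker(H\arr Q)$, and running the same argument on $0\arr\Ker\alpha\arr F\arr F'\arr 0$ gives $\Ker\alpha\otimes_A\kappa(\mathfrak p)=0$ for all $\mathfrak p$; base-changing to the fibres of $\Spec B\arr\Spec A$ this yields $\Ker\alpha\otimes_B\kappa(\mathfrak q)=0$ for all $\mathfrak q\in\Spec B$, whence $\Ker\alpha=0$ by Nakayama, $\Ker\alpha$ being finitely generated over $B$.

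\emph{Main obstacle.} The only non-formal ingredient is the local criterion for flatness. In the finitely presented, not necessarily Noetherian setting of the statement the classical Noetherian form does not apply verbatim: one descends $A\arr B$, the modules, and the flatness of $H$ to a Noetherian subring of $A$ by standard limit arguments, checks that the fibrewise hypothesis descends over the image of $\Spec A$ --- which is exactly where flatness of $Q$ has to be tested --- and then invokes the Noetherian criterion. That bookkeeping, rather than any new idea, is where care is needed.
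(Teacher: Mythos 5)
Your argument follows the same route as the paper: reduce to the scheme/affine case by smooth atlases and then rely on what is in substance EGA IV, 11.3.7 (which the paper cites wholesale and you instead unpack via the Tor exact sequences and the local flatness criterion plus Noetherian approximation). The one point to watch is that the finite generation of $\Ker\alpha$ over $B$, which you invoke for the final Nakayama step, is not automatic when $B$ is not Noetherian (kernels of maps of finitely presented modules need not be finitely generated), so that step too must be performed after the Noetherian reduction you describe in your last paragraph; with that ordering the proof is complete and agrees with the paper's.
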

\begin{proof}
 It is easy to see that taking atlases of $\stY$ and $\stX$ we can reduce the problem to the case of schemes, where it follows from \cite[Proposition 11.3.7]{EGAIV-3}.
\end{proof}

\begin{remark}\label{picard of BG}
 Let $\stY$ be an algebraic stack and $G$ be a sheaf of groups over $\stY$. Then we have a natural isomorphism
$$
  \Pic \Bi_\stY G \simeq \Pic \stY \oplus \Hom(G,\Gm)
$$
Indeed, by descent, an invertible sheaf over $\Bi_\stY G$ is given by a pair $(\shL,\rho)$ where $\shL$ is an invertible sheaf over $\stY$ and
$\rho\in \Hom(G,\Gm)$, which defines an action of $G$ on $\shL$ given by $\rho\colon G\arr \Gm\simeq \Autsh(\shL)$.
\end{remark}

In some proofs we will use dimension counting for algebraic stacks. We recall here some properties which are well known for schemes.
We refer to \cite[Chapter 11]{Laumon1999} for definitions and basics about dimension theory for stacks.

\begin{remark}
 Let $f\colon \stX\arr \stY$ be a locally of finite type map of algebraic stacks. If $\xi\in |\stX|$, $\eta \colon \Spec k\arr \stY$, where $k$ is a field, maps to $f(\xi)$  and $x \in |\stX\times_\stY k|$ maps to $\xi$ via the projection then the number
  \begin{displaymath}
   \dim_\xi f = \dim_x (\stX\times_\stY k)\in \Z
  \end{displaymath}
  does not depend on $\eta$ and $x$. Indeed by standard arguments about fiber products one can reduce to the case $\stY=\Spec k$ and show that if $L/k$ is a field extension and $\xi'\in |\stX\times_k L|$ maps to $\xi\in |\stX|$ then $\dim_{\xi'}(\stX\times_k L)=\dim_\xi \stX$. Using the definition of dimension for stacks one can assume that $\stX$ is a scheme. In this case the result is standard (see for instance \cite[Tag 02FW]{SP014}).
\end{remark}

\begin{definit}
 Given $n\in \Z$, a locally of finite type map $f\colon \stX\arr \stY$ of algebraic stacks has (pure) \emph{relative dimension} $n$ if all (the irreducible components of all) the fibers have dimension $n$.
\end{definit}

\begin{remark}\label{equidimensional atlases}
 If $\stX$ is a quasi-compact algebraic stack there exists $n\in \N$ and an atlas $X\arr \stX$ of pure relative dimension $n$, where $X$ is a quasi-compact scheme. Indeed if $P\colon X'\arr \stX$ is an atlas from a quasi-compact scheme $X'$, by \cite[Proposition 11.10]{Laumon1999} we have a decomposition $X'=\bigsqcup_{r=0}^n X_r$ such that, if $x\in X_r$, $r=\dim_x P(=\dim_x P_{|X_r})$. Taking into account \cite[Corollary 11.11]{Laumon1999} the map $Q\colon X=\bigsqcup_{r=0}^n \A^{n-r}_{X_r}\arr \stX$ satisfies $\dim_x Q=n$ for all $x\in X$, i.e. it has pure relative dimension $n$.
\end{remark}

\begin{prop}\label{dimension formulas for stacks}
Let $f\colon \stX\arr \stY$ be a flat and locally of finite type map of locally noetherian algebraic stacks. Then
\begin{align}\label{formula for dimension of stacks}
 \dim_\xi \stX = \dim_\xi f + \dim_{f(\xi)} \stY \text{ for all }\xi\in |\stX|
\end{align}
In particular if $f$ has relative dimension $r\in \Z$ then
\begin{displaymath}
 \dim \stX=\dim \stY + r
\end{displaymath}
Moreover if $\stY$ is locally of finite type over a field or $\Z$, $f$ has pure relative dimension $r\in \Z$ and $\stY'$ is an irreducible component of $\stY$ then all irreducible components of $f^{-1}(\stY')$ dominate $\stY'$ and have dimension $\dim \stY'+r$.
\end{prop}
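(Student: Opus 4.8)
The plan is to deduce all three assertions from the corresponding facts for schemes by passing to atlases; the only step requiring real care is the relative-dimension bookkeeping behind the basic formula \eqref{formula for dimension of stacks}. For that formula I would choose a smooth surjective atlas $V\arr\stY$ from a scheme and then a smooth surjective atlas $U\arr\stX\times_\stY V$ from a scheme, so that $U\arr\stX$ and $V\arr\stY$ are atlases and $g\colon U\arr V$ is a flat, locally of finite type morphism of locally noetherian schemes. Fixing $\xi\in|\stX|$, a point $u\in U$ above it and its image $v\in V$, I would unwind the definition of the dimension of a stack along an atlas, using that $U\arr\stX$ factors through the base change $\stX\times_\stY V\arr\stX$ of $V\arr\stY$, that the fibre $\stX\times_\stY\kappa(v)$ carries the atlas $U_v:=U\times_V\kappa(v)\arr\stX\times_\stY\kappa(v)$ obtained by base change from $U\arr\stX\times_\stY V$, and that the relative dimension of a smooth morphism at a point is invariant under base change; this yields
\begin{align*}
 \dim_\xi\stX &= \dim_u U - \dim_u(U\arr\stX), & \dim_{f(\xi)}\stY &= \dim_v V - \dim_v(V\arr\stY),\\
 \dim_\xi f &= \dim_u U_v - \dim_u(U\arr\stX\times_\stY V), & \dim_u(U\arr\stX) &= \dim_u(U\arr\stX\times_\stY V) + \dim_v(V\arr\stY),
\end{align*}
so that \eqref{formula for dimension of stacks} becomes exactly $\dim_u U = \dim_u U_v + \dim_v V$, the dimension formula for the flat morphism $g$ of locally noetherian schemes.

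The ``in particular'' clause is then immediate: if every fibre of $f$ has dimension $r$ then all fibres are nonempty, so $f$ is surjective, and since $\dim_\xi f=r$ for all $\xi$, \eqref{formula for dimension of stacks} gives $\dim_\xi\stX=r+\dim_{f(\xi)}\stY$; taking the supremum over $\xi\in|\stX|$, which by surjectivity runs over all local dimensions of $\stY$, gives $\dim\stX=\dim\stY+r$.

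For the last statement, let $\stY'$ be an irreducible component of $\stY$ with its reduced structure and generic point $\eta'$, and $\stW$ an irreducible component of $f^{-1}(\stY')$ with generic point $w$. The point is that $f^{-1}(\stY')=\stX\times_\stY\stY'\arr\stY'$ is flat, being a base change of $f$, so \eqref{formula for dimension of stacks} applies to it: each of its fibres is a fibre of $f$, hence of pure dimension $r$, so its relative dimension at $w$ is $r$, while $\dim_{f(w)}\stY'=\dim\stY'$ because the integral stack $\stY'$ is equidimensional (being locally of finite type over a field or over $\Z$). Hence $\dim_w\bigl(f^{-1}(\stY')\bigr)=r+\dim\stY'$, and as $w$ is the generic point of the component $\stW$ the left-hand side equals $\dim\stW$, so $\dim\stW=\dim\stY'+r$. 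Finally, $\stW$ dominates $\stY'$: a flat morphism is generizing, and $f(w)\in|\stY'|=\overline{\{\eta'\}}$, so there is a point $w_0$ above $\eta'$ generizing $w$; since $w_0\in|f^{-1}(\stY')|$ and $w$ is generic in $\stW$, we must have $w_0=w$, i.e.\ $f(w)=\eta'$.

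The only step where I anticipate real difficulty is the first one — checking, through the two layers of atlases, the four identities displayed above. Everything else rests on standard scheme theory: the dimension formula for flat morphisms of locally noetherian schemes, the fact that flat morphisms are generizing, and the equidimensionality of integral schemes of finite type over a field or over $\Z$; see, e.g., \cite{EGAIV-3} and \cite{SP014}.
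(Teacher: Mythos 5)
Your treatment of the formula (\ref{formula for dimension of stacks}) is correct and is essentially the paper's own route: the paper also reduces to the scheme case through atlases (passing via algebraic spaces, where you go directly with a double atlas and the four bookkeeping identities, which all check out), and the scheme-level statement you quote as standard is exactly what the paper proves in a few lines from the local-ring dimension formula for flat maps plus semicontinuity of fibre dimension; outsourcing it to EGA/Stacks Project is acceptable. The ``in particular'' clause is argued as in the paper.

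The one genuine gap is in the last part, where you twice identify a local dimension of a stack with a global one: $\dim_{f(w)}\stY'=\dim\stY'$ ``because the integral stack $\stY'$ is equidimensional'', and $\dim_w\bigl(f^{-1}(\stY')\bigr)=\dim\stW$ ``as $w$ is the generic point of the component $\stW$''. For irreducible \emph{schemes} locally of finite type over a field or $\Z$ these equalities are standard, but for stacks they are precisely the point that requires proof: $\dim_\zeta$ is computed through an atlas, and an atlas of an irreducible stack is a scheme that need not be irreducible, so a priori its components — hence the local dimensions at its several generic points — could have different dimensions, and ``equidimensionality'' of an irreducible stack is vacuous and gives nothing. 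The paper spends the second half of its proof on exactly this: using \ref{equidimensional atlases} it chooses an atlas of pure relative dimension, notes that formula (\ref{formula for dimension of stacks}) forces all generic points of that atlas to have the same local dimension, and then invokes the scheme facts from EGA ($\dim_z Z=\dim\overline{\{z\}}$ for schemes locally of finite type over a field or $\Z$) to conclude $\dim_\zeta\stZ=\dim\stZ$ for any irreducible stack $\stZ$ locally of finite type over a field or $\Z$. You need this statement both for $\stY'$ and for the component $\stW$ (which is locally of finite type over the same base, so it does apply), and if you only prove it at generic points you should establish $f(w)=\eta'$ \emph{before} using it for $\stY'$. Apart from this, the last part is fine; in particular your dominance argument via the generalizing property of flat morphisms is a correct variant of the paper's appeal to openness of $f$.
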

\begin{proof}
We first prove (\ref{formula for dimension of stacks}) when $\stX$ and $\stY$ are schemes. By \cite[Corollary 14.2.6]{EGAIV-3} and since fibers have the subspace topology one can assume $f$ surjective, of relative dimension $n\in \N$ and translate (\ref{formula for dimension of stacks}) in $\dim \stX=\dim \stY+n$. By \cite[Theorem 14.2.1]{EGAIV-3} we have $\dim (\odi{\stX,x})=\dim (\odi{f^{-1}(f(x)),x})+\dim (\odi{\stY,f(x)})$ for all $x\in \stX$. Since for all $y\in \stY$ we have $\max_{x\in f^{-1}(y)}\{\dim (\odi{f^{-1}(y),x})\}=n$ we get the desired expression.

 When $f$ is a smooth atlas (\ref{formula for dimension of stacks}) follows from definition of $\dim_{f(\xi)}\stY$. We show that 
 \begin{displaymath}
  (\ref{formula for dimension of stacks})\text{ for schemes}\then (\ref{formula for dimension of stacks})\text{ for algebraic spaces} \then (\ref{formula for dimension of stacks})\text{ for stacks}
 \end{displaymath}
Both implications follows from the same proof. One considers smooth atlases $X\arr \stX$ and $Y\arr \stY$, choose a point $y\in Y$ over $f(\xi)$ and $x\in (X\times_\stY k(y))$ mapping to $\xi$. Writing a diagram of all possible fiber products the proof now consists in applying (\ref{formula for dimension of stacks}) several times on various projections of this diagram.
 
 If $f$ has relative dimension $r$ the expression $\dim \stX=\dim \stY + r$ follows from (\ref{formula for dimension of stacks}) and the fact that for all $\eta \in |\stY|$ we have $\max_{\xi\in f^{-1}(\eta)}\{\dim_\xi f\}=r$. We now consider the last claim in the statement. We can assume $\stY$ irreducible. Let $\stX'$ be an irreducible component of $\stX$ with generic point $\xi$. Notice that $\stX'$ contains an open substack of $\stX$. In particular $\dim_\xi \stX=\dim_\xi \stX'$. Moreover since $f$ is open $f(\xi)=\eta$ is the generic point of $\stY$. By (\ref{formula for dimension of stacks}) we have $\dim_\xi \stX'= \dim_\eta \stY + r$. Thus it suffices to show that if $\stZ$ is an irreducible stack locally of finite type over a field or $\Z$ and $\zeta$ is its generic point then $\dim_\zeta \stZ=\dim \stZ$. First we can assume $\stZ$ quasi-compact by taking an open substack of $\stZ$ of the same dimension. Then by \ref{equidimensional atlases} there exists an atlas $P\colon Z\arr \stZ$ of pure 
relative dimension $r$, so that $\dim Z=\dim \stZ+r$. On the other hand we have seen that if $z$ is a generic point of $Z$ then $\dim_z Z=\dim_\zeta \stZ+r$. This tells us that $\dim_z Z$ does not depend on the choice of the generic point and therefore, by \cite[Corollay 10.6.4, Example 10.7.1]{EGAIV-3}, $\dim_z Z=\dim \overline{\{z\}}=\dim Z$, which implies $\dim \stZ=\dim_\zeta \stZ$.
\end{proof}

\begin{cor}\label{dimension of Cartier divisors for stacks}
 Let $\stX$ be an irreducible stack of finite type over a field and $\stZ$ be the zero locus of a section of an invertible sheaf on $\stX$. If $\emptyset\neq \stZ\subsetneq \stX$ then all irreducible components of $\stZ$ have dimension $\dim\stX - 1$.
\end{cor}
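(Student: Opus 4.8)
The plan is to reduce the statement to the corresponding well-known fact for schemes (Krull's principal ideal theorem) by passing to a suitable smooth atlas of $\stX$, and then to transport the dimension count back to $\stZ$ using the dimension formula (\ref{formula for dimension of stacks}). First I would observe that $\stX$ is quasi-compact, being of finite type over a field, so by Remark \ref{equidimensional atlases} there exist $r\in\N$ and an atlas $P\colon X\to\stX$ of pure relative dimension $r$, with $X$ a quasi-compact scheme of finite type over the field; thus $\dim_x(X\to\stX)=r$ for every $x\in X$ and, by Proposition \ref{dimension formulas for stacks}, $\dim X=\dim\stX+r$. Writing $\stZ$ as the zero locus of a section $s$ of an invertible sheaf $\shL$ on $\stX$, the closed subscheme $Z:=P^{-1}(\stZ)$ of $X$ is the zero locus of $P^*s\in P^*\shL$, and it is nonempty since $P$ is surjective and $\stZ\neq\emptyset$.

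Next I would analyze $Z$ component by component on $X$. For each irreducible component $X_i$ of $X$, the scheme $X_i$ is irreducible of finite type over the field, and $Z\cap X_i$ is the zero locus of $P^*s|_{X_i}$. The point to check is that $Z\cap X_i\neq X_i$: otherwise $X_i\subseteq P^{-1}(\stZ)$, so $P$ sends the nonempty open subset $X_i\smallsetminus\bigcup_{j\neq i}X_j$ of $X_i$ into $\stZ$; since $P$ is smooth this image is open and nonempty in the irreducible stack $\stX$, hence dense, and since $\stZ$ is closed this would force $|\stZ|=|\stX|$, against the hypothesis $\stZ\subsetneq\stX$. Hence Krull's principal ideal theorem on each $X_i$ gives that $Z\cap X_i$ is either empty or pure of dimension $\dim X_i-1=\dim\stX+r-1$. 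As every irreducible component of $Z$ is contained in some $X_i$ and is therefore a component of $Z\cap X_i$, we conclude that every irreducible component of $Z$ has dimension $\dim\stX+r-1$; in particular $\dim_z Z=\dim\stX+r-1$ for every $z\in Z$.

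Finally I would descend this to $\stZ$. The atlas $P$ restricts to a smooth surjective map $Z=X\times_\stX\stZ\to\stZ$ which, being a base change of $P$, has pure relative dimension $r$, and $\stZ$ is noetherian; so for $\zeta\in|\stZ|$ and any $z\in|Z|$ over it, formula (\ref{formula for dimension of stacks}) yields $\dim_z Z=r+\dim_\zeta\stZ$, whence $\dim_\zeta\stZ=\dim\stX-1$. Applying this with $\zeta$ the generic point of an arbitrary irreducible component of $\stZ$ gives the claim.

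The argument has no serious obstacle: the only subtlety is the one highlighted above, namely to arrange that the chosen atlas has \emph{pure} relative dimension (so that the dimension bookkeeping is uniform over $X$) and to rule out that $Z$ contains an entire component of $X$ (which uses the irreducibility of $\stX$, the openness of the smooth map $P$, and the hypothesis $\stZ\subsetneq\stX$). Everything else is the routine reduction to the scheme case combined with the dimension theory already recorded in Proposition \ref{dimension formulas for stacks}.
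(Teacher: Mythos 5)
Your proposal is correct and takes essentially the same route as the paper's proof: pick an atlas of pure relative dimension $r$ via Remark \ref{equidimensional atlases}, use the irreducibility of $\stX$ (and openness/surjectivity of the smooth atlas) to rule out that the pullback of $\stZ$ contains a whole component of the atlas, apply the scheme-level hypersurface dimension statement on each component, and transfer dimensions back through Proposition \ref{dimension formulas for stacks}. The only implicit step is that each irreducible component $X_i$ of the atlas has dimension $\dim\stX+r$ (not just $\dim X=\dim\stX+r$), which is exactly the last clause of Proposition \ref{dimension formulas for stacks} and is the same fact the paper's own argument relies on.
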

\begin{proof}
 Let $P\colon X\arr \stX$ be an atlas of pure relative dimension $r$ (see \ref{equidimensional atlases}), $\stZ'$, $Z'$ and $X'$ be irreducible components of $\stZ$, $P^{-1}(\stZ')$ and $X$ such that $Z'\subseteq X'$. Notice that $Z'\subsetneq X'$ because otherwise $P(Z')$ contains the generic point of $\stX$. Since $Z'$ is an irreducible component of a section of an invertible sheaf on $X'$ we have $\dim Z'=\dim X'-1$. On the other hand since $P$ has pure relative dimension $r$ we have $\dim Z'=\dim \stZ'+r$ and $\dim X'=\dim \stX+r$.
\end{proof}

\begin{cor}\label{dimension of image for stacks}
 Let $f\colon \stX\arr\stY$ be a map of stacks locally of finite type over a field and assume that $\stX$ is a Deligne-Mumford stack. Then $\dim \overline{f(\stX)}\leq \dim \stX$, where $\overline{f(\stX)}$ is the reduced closed substack of $\stY$ whose topological space is $\overline{f(|\stX|)}$.
\end{cor}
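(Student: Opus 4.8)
The plan is to descend, via atlases, to an elementary statement about schemes. First, since the stacks to which this is applied are quasi-compact, I would reduce to the case where $\stX$ is quasi-compact; then, after replacing $\stX$ by $\stX_{\mathrm{red}}$, shrinking $\stY$ to a quasi-compact open substack containing $f(|\stX|)$, and replacing it by the reduced closed substack $\overline{f(\stX)}$ inside it (none of which alters $\dim\stX$ or $\dim\overline{f(\stX)}$), it suffices to show: if $\stX$ is quasi-compact, $\stY$ is reduced and quasi-compact, and $f$ has topologically dense image, then $\dim\stY\le\dim\stX$.

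Since $\stX$ is Deligne-Mumford and quasi-compact, I would choose an \'etale atlas $p\colon X\arr\stX$ with $X$ a quasi-compact scheme; this is exactly the point where the hypothesis is used, because \'etale maps are flat, locally of finite type and of pure relative dimension $0$, so Proposition \ref{dimension formulas for stacks} gives $\dim X=\dim\stX$ with no shift (a general algebraic stack only admits a smooth atlas, which changes the dimension by the relative dimension). By \ref{equidimensional atlases} I would then choose a smooth atlas $q\colon Y\arr\stY$ of pure relative dimension $r$ with $Y$ a quasi-compact scheme, so $\dim Y=\dim\stY+r$ and $q$ is open and surjective. Now form $W:=X\times_\stY Y$, a quasi-compact algebraic space: the projection $W\arr X$ is the base change of $q$, hence smooth of pure relative dimension $r$, so $\dim W=\dim X+r$ by Proposition \ref{dimension formulas for stacks}; while the image of the projection $W\arr Y$ is $q^{-1}(f(|\stX|))$ --- a point of $Y$ lifts to $W$ exactly when its image in $\stY$ lies in the image of $X\arr\stY$, which is $f(|\stX|)$ as $p$ is surjective --- and this is dense in $Y$ since $q$ is open and surjective and $f(|\stX|)$ is dense in $\stY$.

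Taking an \'etale atlas $W'\arr W$ with $W'$ a quasi-compact scheme, one has $\dim W'=\dim W$ and $W'\arr Y$ still has dense image, so everything reduces to the statement: a finite-type morphism $g\colon V\arr Y$ of finite-type $k$-schemes with $V$ quasi-compact and dense image satisfies $\dim Y\le\dim V$. Indeed $V$ has finitely many irreducible components $V_1,\dots,V_m$ with generic points $\eta_i$, and $\overline{g(V_i)}=\overline{\{g(\eta_i)\}}$ is integral of dimension $\degtr_k\kappa(g(\eta_i))\le\degtr_k\kappa(\eta_i)=\dim V_i$ (using the inclusion $\kappa(g(\eta_i))\hookrightarrow\kappa(\eta_i)$), so $\dim Y=\dim\bigcup_i\overline{g(V_i)}=\max_i\dim\overline{g(V_i)}\le\max_i\dim V_i=\dim V$, the \emph{finiteness} of the union being essential. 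Chaining the relations gives $\dim\stY+r=\dim Y\le\dim W'=\dim W=\dim X+r=\dim\stX+r$, hence $\dim\stY\le\dim\stX$. The most delicate step is this dimension chase: one must use a smooth atlas of $\stY$ of \emph{pure} relative dimension (so that Proposition \ref{dimension formulas for stacks} applies verbatim), remember that $X\times_\stY Y$ is in general only an algebraic space --- requiring the extra \'etale atlas $W'$ before landing on schemes --- and keep the quasi-compactness throughout, which is what legitimates the final finite-union step (some such hypothesis really is needed, as an infinite disjoint union of closed points can be topologically dense in a curve).
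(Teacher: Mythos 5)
Your argument is correct and is essentially the paper's own proof, written out in full: the paper likewise reduces to the dominant case, replaces $\stY$ by a scheme using an atlas of pure relative dimension (via \ref{equidimensional atlases}), and replaces $\stX$ by a scheme using that \'etale atlases do not change dimension, leaving the scheme case as ``standard''. Your explicit quasi-compactness caveat (needed for the finitely-many-irreducible-components step, as your dense-union-of-points example shows) is a fair observation that is implicit both in the paper's appeal to the standard scheme case and in its applications, where $\stX$ is of finite type.
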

\begin{proof}
 When $\stX$ and $\stY$ are schemes the result is standard. We show how to reduce to this case. We can assume that $f$ is dominant so that $\overline{f(\stX)}=\stY$. By taking an atlas of $\stY$ of pure relative dimension (see \ref{equidimensional atlases}) we can assume that $\stY$ is a scheme. Moreover we can replace $\stX$ by a scheme because \'etale atlases do not change dimension.
\end{proof}

\begin{prop}\label{relations picard group smooth stacks}
 Let $\stX$ be a smooth and integral algebraic stack over a field.
 \begin{itemize}
 \item If $h\colon \stV\arr\stX$ is a vector bundle of finite rank then $h^*\colon \Pic \stX\arr\Pic \stV$ is an isomorphism.
 \item If $\stZ$ is a closed substack of $\stX$ of codimension greater than $2$ then the restriction map $\Pic \stX \arr \Pic (\stX-\stZ)$ is an isomorphism.
 \item Given $\shL_1,s_1,\dots,\shL_r,s_r$, where $\shL_i$ is an invertible sheaf on $\stX$ with a non zero global section $s_i$ whose zero locus $Z(s_i)$ is integral, then the restriction map induces an isomorphism
 \begin{displaymath}
  \Pic \stX/\langle \shL_1,\dots,\shL_r\rangle \simeq \Pic (\stX-(Z(s_1)\cup \cdots \cup Z(S_r)))
 \end{displaymath}
 \end{itemize}
\end{prop}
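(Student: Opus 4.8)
The common engine will be the identification, valid for any smooth and integral algebraic stack $\stX$ over a field, of the Picard group with the Weil divisor class group,
\[
\Pic \stX \simeq \Cl \stX,
\]
together with the excision exact sequence
\[
\bigoplus_i \Z\cdot [\stZ_i] \arr \Cl \stX \arr \Cl(\stX - \stZ) \arr 0
\]
for a proper closed substack $\stZ \subsetneq \stX$, where the $\stZ_i$ run over the codimension-one irreducible components of $\stZ$. So the first thing I would do is set this up: define $\Cl \stX$ as the free abelian group on the integral closed substacks of codimension one modulo divisors of nonzero rational functions (sections of $\shO_\stX$ on dense open substacks, which form a field since the residual gerbe at the generic point is a gerbe over a field). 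Since $\stX$ is smooth over a field, a smooth atlas has regular, hence factorial, local rings; as being Cartier and being invertible are smooth-local conditions, and flat pullback of cycles along the atlas is compatible with divisors of rational functions (here one uses the dimension formulas of Proposition~\ref{dimension formulas for stacks} to see that codimension-one cycles pull back to codimension-one cycles), the usual argument for regular schemes goes through and produces the isomorphism $\Cl \stX \simeq \Pic \stX$, $[\stZ'] \mapsto \shO_\stX(\stZ')$. The excision sequence is then proved exactly as over a field: cycle restriction is surjective with kernel generated by the components of $\stZ$, and the closure of a principal divisor on $\stX - \stZ$ is a principal divisor on $\stX$.

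Granting this, the second and third bullets are immediate. For the second, $\codim_\stX \stZ \geq 2$ leaves no $\stZ_i$, so $\Cl \stX \simeq \Cl(\stX - \stZ)$. For the third, each $Z(s_i)$ is an integral closed substack of codimension one (the latter by Corollary~\ref{dimension of Cartier divisors for stacks}), and these are precisely the codimension-one components of the locus $Z(s_1) \cup \cdots \cup Z(s_r)$ being removed; excision gives $\Cl \stX / \langle [Z(s_1)], \dots, [Z(s_r)] \rangle \simeq \Cl(\stX - \bigcup_i Z(s_i))$, and under $\Cl \simeq \Pic$ the class $[Z(s_i)]$ is sent to $\shL_i$ because $s_i$ exhibits an isomorphism $\shL_i \simeq \shO_\stX(Z(s_i))$.

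For the first bullet, $\stV$ is again smooth and integral, and injectivity of $h^*$ follows by composing with restriction to the zero section. Surjectivity I would check on divisor classes. If $W$ is a prime divisor on $\stV$ whose image in $\stX$ is not dense, then $\overline{h(W)}$ is an integral closed substack; comparing dimensions via Proposition~\ref{dimension formulas for stacks} forces $\overline{h(W)}$ to have codimension one, and $h^{-1}(\overline{h(W)})$, being a vector bundle over the integral stack $\overline{h(W)}$, is integral, whence $W = h^{-1}(\overline{h(W)}) = h^* \overline{h(W)}$ is a pullback. For an arbitrary prime divisor $W$, pick $\Spec K \arr \stX$ over the generic point of $\stX$ (for instance the generic point of an atlas); then $\stV \times_\stX \Spec K \simeq \A^n_K$ and $W_K$ is a Weil divisor on $\A^n_K$, necessarily principal since $\A^n_K$ is factorial, say $W_K = \divis(g)$ with $g$ a rational function on $\stV$. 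Then every component of the divisor $W - \divis(g)$ has non-dense image in $\stX$, hence is a pullback by the previous case, so $[W]$ lies in the image of $h^*$. As $\Cl \stV$ is generated by prime divisors, $h^*$ is surjective. (Alternatively one can run an induction on the rank using the associated projective bundle.)

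The main obstacle is the bookkeeping in the first paragraph: making Weil divisors, the class group, and the comparison $\Pic \simeq \Cl$ work for integral algebraic stacks that are not assumed to be Deligne--Mumford, and checking compatibility with a smooth atlas. Once that is in place, all three assertions reduce to completely standard facts about regular integral schemes.
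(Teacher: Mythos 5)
There is a genuine gap, and it sits at the foundation of your argument: the identification $\Pic \stX \simeq \Cl \stX$ is false for algebraic stacks with generic stabilizers. The simplest counterexample is $\stX=\Bi\Gm$ over a field: it has no nonempty proper closed substacks at all, so $\Cl(\Bi\Gm)=0$, while $\Pic(\Bi\Gm)\simeq\Z$ (characters of $\Gm$); a nontrivial character gives a line bundle with no nonzero rational section, hence not of the form $\odi\stX(D)$. This is not a pathological side case for the present paper: the proposition is applied to stacks such as $\stU_{d,g,n}$ and open substacks of $\jac_{d,g}$, which are (opens in) $\Gm$-gerbes over $\Picsh^d_{\stM_{g,1}/\stM_g}$, so exactly the kind of stack where $\Cl\subsetneq\Pic$ and where the ``gerby'' line bundles are the interesting ones. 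What is true (and what the paper uses, quoting the divisor description from Arsie--Vistoli) is only one half of your engine: a line bundle on $\stX$ whose restriction to a dense open is trivial is of the form $\odi\stX(\shD)$ with $\shD$ supported in the complement. That gives the kernel computations in bullets two and three, but it does not give surjectivity of $\Pic\stX\arr\Pic(\stX-\stZ)$, which in your scheme would follow from excision on $\Cl$ only if every line bundle on the open part came from a divisor -- precisely the false claim. The same issue infects your first bullet: checking surjectivity of $h^*$ ``on divisor classes'' only shows that the image of $\Cl\stV$ in $\Pic\stV$ consists of pullbacks, and says nothing about the classes not represented by divisors.

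The missing ingredients are supplied differently in the paper. Surjectivity of restriction to a dense open is proved by extending a line bundle on $\stU$ to a coherent sheaf on $\stX$ (Laumon--Moret-Bailly, Cor.\ 15.5) and replacing it by its double dual, a reflexive rank-one sheaf on a smooth stack, hence invertible; only then does the divisor argument identify the kernel. For the vector bundle statement the paper does not argue with divisors at all: it shows $\sigma^*$ (restriction along the zero section) is injective by a descent argument on the small smooth-\'etale site, reducing to the triviality of line bundles on $\A^n_D$ for $D$ smooth affine. If you want to keep a divisor-theoretic flavour, you would have to work with the exact sequence relating $\Cl\stX$, $\Pic\stX$ and the Picard group of the residual gerbe at the generic point, and show the latter is unchanged by the modifications in question -- but as written, replacing $\Pic$ by $\Cl$ throughout loses exactly the part of the Picard group this paper is computing.
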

\begin{proof}
 Let $\sigma\colon \stX\arr \stV$ be the zero section. We must prove that $\Pic\stV\arrdi{\sigma^*} \Pic\stX$ is injective. Let $\shQ$ be an invertible sheaf on $\stV$ in the kernel and define the sheaf $F_\shQ$ on the small smooth-\'etale site of $\stX$ by
 \begin{displaymath}
  F_\shQ(U)=\Iso_{h^{-1}(U)}(\shQ_{|h^{-1}(U)},\odi{h^{-1}(U)})
 \end{displaymath}
The map $\sigma$ induces a map $F_\shQ\arr \Isosh_\stX (\sigma^*\shQ,\odi \stX)$ and it suffices to prove that it is an isomorphism. This is a local question, so that we can assume $\stX=\Spec D$ affine and $\stV$ trivial. By standard intersection theory for schemes $\shQ$ is trivial and therefore the previous map on the global sections is just $(D[x_1,\dots,x_n])^* \simeq D^*$.

Let $\stU$ be an open substack of $\stX$. If $\shQ$ is an invertible sheaf on $\stU$ then by \cite[Corollary 15.5]{Laumon1999} there exists a coherent sheaf $\shF$ on $\stX$ such that $\shF_{|\stU}\simeq \shQ$. Then the sheaf $\shL=\shF^{\vee\vee}$ is a reflexive sheaf of rank $1$ and thus invertible and $\shL_{|\stU}\simeq \shQ$. This shows that $\Pic \stX\arr \Pic \stU$ is surjective. 
We now use the description of divisors given in \cite[Proof of Lemma 5.2]{Arsie2004}. 
Let $\shL$ be an invertible sheaf on $\stX$ such that $\shL_{|\stU}\simeq \odi{\stU}$.  
It follows that there is a divisor $\shD$ on $\stX$ such that $\shL\simeq \odi \stX(\shD)$ and the support of $\shD$ is in $\stX-\stU$. 
In particular if $\stX-\stU$ has codimension greater than $2$ then $\shD=0$. For the last point the sheaves $\shL_i \simeq \odi \stX(Z(s_i))$ restrict to $\odi \stU$ on $\stU$ and $\shL\simeq \shL_1^{m_1}\otimes \shL_r^{m_r}$ where $m_i$ is the multiplicity of $\shD$ in $Z(s_i)$ (computed on an atlas).
\end{proof}


\begin{lemma}\label{cbs for elliptic curves}
Let $\pi\colon\shC\arr\stS$ be a genus $g$ curve over an algebraic stack and $\shF$ be a finitely presented quasi-coherent sheaf on $\shC$,
flat over $\stS$. Then $\R^j\pi_*\shF$ is locally free and satisfies base change for all $j\in \N$ in the following cases.
\begin{enumerate}
 \item $\shF$ is an invertible sheaf on $\shC$ of degree $0$ such that $[\shF]=0$ in $\Picsh^0_{\shC/\stS}$. In this case $\pi_*\shF$ is an invertible sheaf and
 the canonical map
\[
\pi^*\pi_*\shF\arr\shF
\]
is an isomorphism.
\item $\shF=\omega_\pi$. The sheaf $\pi_*\omega_\pi$ has rank
$g$ and $\R^1\pi_*\omega_\pi\simeq \odi \stS$. Moreover if $g=1$ the map $\pi^*\pi_*\omega_\pi\simeq\omega_\pi$ is an isomorphism.

\item $\shF$ is an invertible sheaf on $\shC$ of degree $d>2g-2$ or $d<0$. In this case $\rk \pi_*\shF=\max\{d+1-g,0\}$
and $\rk \R^1\pi_*\shF=\max\{-d-1+g,0\}$.
\item $\shF$ is supported on a closed substack of $\shC$ which is quasi-finite over $\stS$. In this case $\R^1\pi_*\shF=0$.
\end{enumerate}
In all of the above cases but the last one we have an isomorphism
\[
\R^1\pi_*\shF\simeq \duale{\pi_*(\duale \shF \otimes \omega_\pi)}
\]
\end{lemma}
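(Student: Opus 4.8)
The plan is to combine Grothendieck's theorem on cohomology and base change with Riemann--Roch and Serre duality computed on the geometric fibres of $\pi$, which are smooth projective curves of genus $g$. Since $\pi$ is representable, proper, smooth and of finite presentation, and since all the assertions (local freeness, compatibility with base change, the canonical maps being isomorphisms) are local on $\stS$ for the smooth topology and compatible with filtered limits, I would first reduce to the case $\stS=\Spec A$ with $A$ noetherian and $\shC\arr\stS$ a proper flat algebraic space. There Grothendieck's theory provides a bounded complex $K^\bullet$ of finite free $A$-modules computing $\R\pi_*(\shF\otimes_A-)$; as the fibres of $\pi$ are one-dimensional one has $\R^j\pi_*\shF=0$ for $j\geq 2$ (trivially locally free and compatible with base change), and $K^\bullet$ may be taken of the form $[K^0\xrightarrow{\ \alpha\ }K^1]$ concentrated in degrees $0$ and $1$. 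Then $\R^1\pi_*\shF=\Coker\alpha$, which always commutes with base change, while $\pi_*\shF=\Ker\alpha$, so everything reduces, case by case, to proving that $\Coker\alpha$ is locally free: once that is known, $\pi_*\shF$ is locally free as well (the kernel of the surjection $K^0\arr\Ker(K^1\arr\Coker\alpha)$ of locally free modules, which splits Zariski-locally), both commute with base change, and the ranks are the fibrewise dimensions of $H^0$ and $H^1$, computed by Riemann--Roch from $\chi(\shF_y)=d+1-g$.

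Next I would settle the four cases by computing cohomology on a geometric fibre $\shC_y$. In case (3) with $d>2g-2$, Serre duality on $\shC_y$ gives $H^1(\shC_y,\shF_y)=0$ (as $\omega_{\shC_y}\otimes\shF_y^{\vee}$ has negative degree), and the same vanishing holds in case (4) since $\shF_y$ is then supported on finitely many points; because $\Coker\alpha\otimes_A k(y)\cong H^1(\shC_y,\shF_y)=0$ for all $y$, Nakayama forces $\R^1\pi_*\shF=0$. In case (3) with $d<0$ one has instead $H^0(\shC_y,\shF_y)=0$, i.e. $\alpha\otimes_A k(y)$ is injective for every $y$, so the ideal of maximal minors of $\alpha$ is the unit ideal, $\alpha$ is Zariski-locally a split monomorphism and $\Coker\alpha$ is locally free, with $\pi_*\shF=0$. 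For case (2), $\shF=\omega_\pi$, I would use relative duality: the trace morphism $\R^1\pi_*\omega_\pi\arr\odi\stS$ restricts on each fibre to the classical trace isomorphism $H^1(\shC_y,\omega_{\shC_y})\xrightarrow{\ \sim\ }k(y)$, hence is an isomorphism and $\R^1\pi_*\omega_\pi$ is invertible; therefore $\pi_*\omega_\pi$ is locally free of rank $h^0(\shC_y,\omega_{\shC_y})=g$, compatibly with base change, and when $g=1$ the canonical map $\pi^*\pi_*\omega_\pi\arr\omega_\pi$ is a morphism of degree-$0$ invertible sheaves on $\shC$ which is the evaluation isomorphism on each elliptic fibre, hence an isomorphism. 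Finally, in case (1) the hypothesis $[\shF]=0$ in $\Picsh^0_{\shC/\stS}$ gives, fppf-locally on $\stS$, an isomorphism $\shF\simeq\odi\shC$; as the conclusions are fppf-local I may take $\shF=\odi\shC$, and then $\pi_*\odi\shC=\odi\stS$ (the geometric fibres being connected and reduced), $\pi^*\pi_*\odi\shC\arr\odi\shC$ is the identity, and the Serre-duality pairing against $\pi_*\omega_\pi$ (locally free by case (2)) produces a morphism $\R^1\pi_*\odi\shC\arr\duale{\pi_*\omega_\pi}$ which is a fibrewise isomorphism, hence an isomorphism, so $\R^1\pi_*\odi\shC$ is locally free of rank $g$.

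The last displayed isomorphism, in cases (1)--(3), is relative Grothendieck--Serre duality for $\pi$ (of relative dimension $1$, with dualizing sheaf $\omega_\pi$): since $\shF$ is locally free it reads
\[
\R\pi_*(\duale\shF\otimes\omega_\pi)[1]\ \simeq\ \R\Homsh_{\odi\stS}\bigl(\R\pi_*\shF,\odi\stS\bigr),
\]
and, using that $\R^1\pi_*\shF=\Coker\alpha$ has already been shown to be locally free (so $0\arr K^0\arr K^1\arr\Coker\alpha\arr0$ splits Zariski-locally and dualization is exact), comparison of cohomology sheaves on the two sides yields $\pi_*(\duale\shF\otimes\omega_\pi)\simeq\duale{\R^1\pi_*\shF}$, whence $\R^1\pi_*\shF\simeq\duale{\pi_*(\duale\shF\otimes\omega_\pi)}$. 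The step I expect to require genuine care is precisely the local freeness: over a possibly non-reduced base $\stS$ the mere constancy of $\dim_{k(y)}H^i(\shC_y,\shF_y)$ does \emph{not} imply that $\R^i\pi_*\shF$ is locally free, so in each case one is forced to argue through an honest vanishing ($\R^1\pi_*\shF=0$), through fibrewise injectivity of $\alpha$ (flatness of the cokernel), or through an explicit identification of $\R^1\pi_*\shF$ with a known invertible or locally free sheaf via the trace map --- and, relatedly, making relative duality and its compatibility with base change precise in the algebraic-space setting is the technical core of the argument.
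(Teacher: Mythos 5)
Your proof is correct and amounts to a careful expansion of exactly the argument the paper has in mind: the paper disposes of everything except the final duality formula by "standard semicontinuity theorems and Riemann--Roch" (i.e.\ the Grothendieck two-term complex, fibrewise vanishing, and Nakayama arguments you spell out), and obtains the isomorphism $\R^1\pi_*\shF\simeq \duale{\pi_*(\duale \shF \otimes \omega_\pi)}$ by citing Kleiman's relative duality theorem, valid once $\R^1\pi_*\shF$ satisfies base change and everything in sight is locally free, which is the same use of relative duality you make via the trace map and the derived-category statement. So the approach matches the paper's; no gaps to report.
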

\begin{proof}
By \cite[II, Definition 10 and Theorem 21]{Kleiman1980} there is a canonical map 
\[
 \pi_*\Homsh(\shF,\omega_\pi)\arr \duale{(\R^1\pi_*\shF)}
 \]
 which is an isomorphism if $\R^1\pi_*\shF$ satisfies base change. In this case if $\shF$ and $\R^1\pi_*\shF$ are locally free we get the last formula in the statement by dualizing the above isomorphism.
 
 All the other claims follow by standard semicontinuity theorems and Riemann-Roch.
\end{proof}

\begin{lemma}\label{degree one sheaves and sections}
 Let $\pi\colon \shC \arr \stS$ be a genus $g$ curve over an algebraic stack and $\shQ$ be a degree $d$ invertible sheaf on $\shC$ with a section $s\in \shQ$ which is non zero on the geometric fibers. Then the zero locus $\stZ$ of $s$ in $\shC$ is a degree $d$ cover of $\stS$. When $d=1$ this defines a section $\tau\colon \stS\arr \shC$ with an isomorphism $\odi \shC(\tau)\simeq \shQ$ sending $1$ to $s$. If in addition $g=1$ then the map $\odi \stS\arrdi s \pi_*\shQ$ is an isomorphism.
\end{lemma}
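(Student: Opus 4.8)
The plan is to read off all three assertions from the section $s$, regarded as a morphism $\alpha\colon\duale\shQ\arr\odi\shC$. First I would prove the cover statement. On a geometric fiber $C$ of $\pi$ the curve is integral (smooth and connected over an algebraically closed field) and $\alpha|_C$ is multiplication by the nonzero section $s|_C$ of the line bundle $\shQ|_C$, hence injective; so $\alpha$ is injective on the geometric fibers of $\pi$. Since $\odi\shC$ is flat over $\stS$, Proposition \ref{cokernel of a geometrically injective map of flat sheaves} shows that $\alpha$ is injective and that $\Coker\alpha$ is flat over $\stS$. As $\Coker\alpha=\odi\stZ$, the substack $\stZ$ is flat over $\stS$; it is moreover closed in the proper $\shC$, hence proper over $\stS$, and quasi-finite (each geometric fiber is the vanishing locus of a nonzero section of a degree $d$ invertible sheaf on an integral curve, hence finite of length $d$), hence finite over $\stS$, and together with the flatness and a short finite-presentation bookkeeping this exhibits $\stZ\arr\stS$ as a cover of degree $d$. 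The injectivity of $\alpha$ also identifies $\stZ$ as an effective Cartier divisor with ideal sheaf $\shI_\stZ=\Imm\alpha\cong\duale\shQ$, so dualizing gives a canonical isomorphism $\odi\shC(\stZ)\cong\shQ$ under which the tautological section $1$ corresponds to $s$.

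When $d=1$ the cover $\pi|_\stZ\colon\stZ\arr\stS$ has degree one, hence is an isomorphism: locally it is a morphism $A\arr B$ with $B$ free of rank one on a generator $e$ and $1_B=ae$ for some $a\in A$, and comparing $e\cdot e$ with $e=1_B\cdot e$ forces $a\in A^\times$, so $1_B$ is itself a basis and $A\arr B$ is bijective. Composing the inverse of $\pi|_\stZ$ with the closed immersion $\stZ\hookrightarrow\shC$ yields a section $\tau$ with image $\stZ$, and the isomorphism $\odi\shC(\tau)=\odi\shC(\stZ)\cong\shQ$ carries $1$ to $s$ by the previous paragraph.

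Finally, if in addition $g=1$ then $\deg\shQ=d=1>2g-2$, so by Lemma \ref{cbs for elliptic curves}(3) the sheaf $\pi_*\shQ$ is invertible of rank $d+1-g=1$ and its formation commutes with base change. The map $\odi\stS\arrdi s\pi_*\shQ$ is the one determined by $s\in\Gamma(\shC,\shQ)=\Gamma(\stS,\pi_*\shQ)$. I would check it is an isomorphism at every geometric point $\Spec\bar k\arr\stS$: by base change it becomes the map $\bar k\arr\Hl^0(C_{\bar k},\shQ_{\bar k})$, $1\mapsto s|_{C_{\bar k}}$, and since $\h^0(\shQ_{\bar k})=1$ by Riemann--Roch (a degree one invertible sheaf on a genus one curve) and $s|_{C_{\bar k}}\neq 0$, this is an isomorphism. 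A morphism of invertible sheaves on an algebraic stack that is an isomorphism at all geometric points is an isomorphism: on an atlas its cokernel is finitely generated, commutes with base change, and vanishes modulo every residue field (reducing to geometric points by a field extension), hence is zero by Nakayama, and a surjection of invertible sheaves is automatically bijective. The steps are all routine; the only points requiring some care are the finite-presentation bookkeeping needed to conclude that $\stZ\arr\stS$ is a cover in the precise sense of the Notations, and, in the genus one case, the fact that the fiberwise computation of $\h^0$ globalizes — which is exactly what the base change statement in Lemma \ref{cbs for elliptic curves} provides.
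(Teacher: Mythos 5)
Your proposal is correct and follows essentially the same route as the paper: both apply Proposition \ref{cokernel of a geometrically injective map of flat sheaves} to the map $\shQ^{-1}\arr\odi\shC$ given by $s$ to get flatness of $\stZ$, conclude finiteness from properness plus quasi-finiteness (the paper cites EGA IV 8.11.1, your ``finite-presentation bookkeeping''), and deduce the genus-one statement from Lemma \ref{cbs for elliptic curves} together with the nowhere-vanishing of $s$ in $\pi_*\shQ$. You merely spell out the ``standard arguments'' for the $d=1$ section and the geometric-point check that the paper leaves implicit.
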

\begin{proof}
 By \ref{cokernel of a geometrically injective map of flat sheaves} the sequence
 \[
 0\arr \shQ^{-1}\arr \odi \shC \arr \odi \stZ\arr 0
 \]
 is universally exact over $\stS$ and $\stZ$ is flat over $\stS$. Moreover $\stZ\arr\stS$ is proper, finitely presented and, by looking at the geometric
 fibers, quasi-finite. By \cite[Theorem 8.11.1]{EGAIV-3} 
 we can conclude that $\stZ\arr\stS$ is a cover. By Riemann-Roch it has degree $d$.
 
 Assume $d=1$. The claim about the section $\tau$ follows from standard arguments.
 The last claim follows from the fact that $\pi_*\shQ$ is invertible and satisfies base change by \ref{cbs for elliptic curves} and $s\in \pi_*\shQ$
 is nowhere vanishing by hypothesis.
\end{proof}

\begin{remark}\label{multiplication by two on jacobians}
 Let $\pi\colon \shC\arr \stS$ be a genus $g$ curve over an algebraic stack and $n\in \Z$.
 The map $[n]\colon \Picsh^r_{\stC/\stS} \arr \Picsh^{rn}_{\stC/\stS}$ is a cover of degree $n^{2g}$ and it is \'etale
 if $n\in \odi \stS^*$.
 Indeed, since the problem is local on $\stS$, one can assume that $\stS$ is a noetherian scheme and that $\shC\arr \stS$ has a section. This allows
 to reduce the problem to the case $r=0$. Since $\Picsh^0_{\stC/\stS}\arr \stS$ is flat and proper of relative dimension $g$, by the local flatness criterion \cite[Theorem 11.3.10]{EGAIV-3} we
 can assume that $\stS$ is the spectrum of an algebraically closed field. In this case the result follows
 from \cite[Proposition 7.1 and Theorem 7.2]{Milne2008}.
 
 In particular all invertible sheaves on $\shC$ of degree divisible by $n$ are fppf locally (on $\stS$) a $n$-th power of an invertible sheaf on $\shC$.
 Moreover $(\Picsh^0_{\stC/\stS})[n]$ is a finite, flat and 
 finitely presented group scheme over $\stS$ of degree $n^{2g}$ and it is \'etale if $n\in \odi \stS^*$.
\end{remark}

\section{The universal jacobian of degree $d$ over $\stM_g$ and its Picard group.}\label{Stack Xd}

In this section we assume to work over a field of characteristic $p\geq 0$ and we fix a non negative integer $g$ (the genus) and an integer $d$ (the degree).

\begin{definit}\label{definition of Xd}
  We denote by $\jac_{d,g}$ the stack of pairs $(C,\shQ)$
  where $C$ is a curve of genus $g$ and $\shQ$
  is an invertible sheaf over $C$ of degree $d$. The stack $\jac_{d,g}$ is called the \emph{universal Jacobian} stack of degree $d$ over $\shM_g$.
\end{definit}

The aim of this section is to describe the Picard group of $\jac_{d,g}$. When $g\geq 2$ this has already been done in \cite{melo-viviani}. We will deal with the remaining cases, that is genus zero and one.

\begin{remark}\label{smoothness of the universal Jacobian}
The forgetful functor $\jac_{d,g}\arr \stM_g$ is the composition of a $\Gm$-gerbe $\jac_{d,g} \arr \Picsh^d_{\stM_{g,1}/\stM_g}$ and
the representable and smooth functor $\Picsh^d_{\stM_{g,1}/\stM_g}\arr \stM_g$.
In particular $\jac_{d,g}$ is a smooth and integral algebraic stack.
\end{remark}

\begin{definit}
 Let $\shC\arr \jac_{d,g}$ be the universal curve over $\jac_{d,g}$. By construction, there exists an invertible sheaf $\shL$ over $\shC$ such that
    \[
  \begin{tikzpicture}[xscale=1.6,yscale=-0.5]
    \node (A0_0) at (0, 0) {$C$};
    \node (A0_1) at (1, 0) {$\shC$};
    \node (A1_3) at (3, 1) {$q^*\shL\simeq\shQ$};
    \node (A2_0) at (0, 2) {$T$};
    \node (A2_1) at (1, 2) {$\jac_{d,g}$};
    \path (A0_0) edge [->]node [auto] {$\scriptstyle{q}$} (A0_1);
    \path (A0_1) edge [->]node [auto] {$\scriptstyle{}$} (A2_1);
    \path (A0_0) edge [->]node [auto] {$\scriptstyle{}$} (A2_0);
    \path (A2_0) edge [->]node [auto] {$\scriptstyle{(C,\shQ)}$} (A2_1);
  \end{tikzpicture}
  \]
for all schemes $T$. We call the sheaf $\shL$ the \emph{universal invertible sheaf} over $\shC$. Given a stack $\stY$ over $\jac_{d,g}$ the universal invertible
sheaf over the universal curve  $\shC\times_{\jac_{d,g}}\stY$ of $\stY$ is the pull-back of $\shL$ via the map $\shC\times_{\jac_{d,g}} \stY\arr \shC$.
\end{definit}

We now describe the result in \cite{melo-viviani} about $\Pic\jac_{d,g}$ when $g\geq 2$.

\begin{remark}\label{notation of picard universal jacobian big genus}
 Let $\pi\colon \shC\arr \shS$ be a genus $g$ curve. Given an invertible sheaf $\shT$ on $\shC$ one can define an invertible sheaf $d_\pi(\shT)$ on $\shS$, called the determinant of cohomology of $\shT$. When $\pi_*\shT$ and $\R^1\pi_*\shT$ are locally free one can simply set $d_\pi(\shT)\simeq \det \pi_*\shT \otimes (\det\R^1\pi_*\shT)^{-1}$. We refer to \cite{melo-viviani} for the general definition. In this paper we just use the fact that the formation of $d_\pi(\shT)$ commutes with arbitrary base changes.
 
 Notice that from \ref{cbs for elliptic curves} it follows that $d_\pi(\omega_\pi)\simeq \det \pi_*\omega_\pi$ and that, if $\shT$ is an invertible sheaf on $\shC$ of positive degree, then $d_\pi(\shT\otimes \omega_\pi)\simeq \det \pi_*(\shT\otimes \omega_\pi)$.
\end{remark}

\begin{theorem}\label{picard universal jacobian big genus}\cite{melo-viviani}
Assume that the ground field is algebraically closed of characteristic $0$ and that $g\geq 2$ and $d>0$. 
 Let $\pi\colon \shC\arr \jac_{d,g}$ be the universal curve and $\shL$ be the universal invertible sheaf
 over $\shC$. Then $\Pic \jac_{d,g}$ is freely generated by $\det \pi_* \omega_\pi$, $d_\pi(\shL)$ and
 $\det \pi_*(\shL \otimes \omega_\pi)$, except for $g=2$, in which case there is a single relation given by $(\det \pi_*\omega_\pi)^{10}$. Moreover for all $n,k \geq 1$ we have an isomorphism
 \begin{displaymath}
  \det \pi_*(\shL^n \otimes \omega_\pi^k)\simeq (\det \pi_*\omega_\pi)^{6k^2-6k-n^2+1} \otimes d_\pi(\shL)^{-nk+n(n+1)/2} 
  \otimes (\det \pi_*(\shL\otimes \omega_\pi))^{nk+n(n-1)/2}
 \end{displaymath}

\end{theorem}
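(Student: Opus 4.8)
The statement about the abstract group $\Pic\jac_{d,g}$ is the main result of \cite{melo-viviani}; what remains is to recast it in terms of the three tautological classes
\[
\lambda:=\det\pi_*\omega_\pi=d_\pi(\omega_\pi),\qquad d_\pi(\shL),\qquad \det\pi_*(\shL\otimes\omega_\pi)=d_\pi(\shL\otimes\omega_\pi)
\]
(the first equality by \ref{cbs for elliptic curves}, the last by \ref{notation of picard universal jacobian big genus}, since $\deg(\shL\otimes\omega_\pi)=d+2g-2>2g-2$) and to extract the displayed formula. My plan is: (i) prove the formula for $\det\pi_*(\shL^n\otimes\omega_\pi^k)$ by Grothendieck--Riemann--Roch; (ii) observe that the same computation, extended to all $n,k\in\Z$ using the quadratic behaviour of the determinant of cohomology, writes every class $d_\pi(\shL^n\otimes\omega_\pi^k)$ — hence, by \cite{melo-viviani}, every element of $\Pic\jac_{d,g}$ — as a monomial in $\lambda$, $d_\pi(\shL)$ and $\det\pi_*(\shL\otimes\omega_\pi)$; (iii) conclude: \cite{melo-viviani} gives $\Pic\jac_{d,g}\simeq\Z^3$ for $g\geq 3$ and $\Z^2\times\Z/10$ for $g=2$, and a surjective endomorphism of a finitely generated abelian group is an isomorphism, so in the first case the three classes are a free basis, while in the second the kernel of $\Z^3\twoheadrightarrow\Pic\jac_{d,g}$ is cut out by the single extra relation $\lambda^{10}$.

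For step (i): since $n,k\geq 1$ and $d>0$ the sheaf $\shL^n\otimes\omega_\pi^k$ has fibrewise degree $nd+k(2g-2)>2g-2$, so $\R^1\pi_*(\shL^n\otimes\omega_\pi^k)=0$ by \ref{cbs for elliptic curves} and the left-hand side equals $d_\pi(\shL^n\otimes\omega_\pi^k)$. Writing $\langle-,-\rangle$ for the Deligne pairing of $\pi\colon\shC\arr\jac_{d,g}$, I would invoke its bi-additivity and symmetry, the relation $d_\pi(A\otimes B)\simeq d_\pi(A)\otimes d_\pi(B)\otimes d_\pi(\odi\shC)^{-1}\otimes\langle A,B\rangle$, and the Riemann--Roch isomorphism for the determinant of cohomology $d_\pi(A)^{\otimes 2}\simeq\langle A,A\rangle\otimes\langle A,\omega_\pi\rangle^{-1}\otimes d_\pi(\odi\shC)^{\otimes 2}$ (Deligne, Mumford). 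Feeding in \ref{cbs for elliptic curves} (which via duality gives $\R^1\pi_*\odi\shC\simeq(\pi_*\omega_\pi)^\vee$ and $\R^1\pi_*\omega_\pi\simeq\odi{\jac_{d,g}}$, hence $d_\pi(\odi\shC)\simeq d_\pi(\omega_\pi)\simeq\lambda$) together with Mumford's relation $\kappa_1=12\lambda$ on $\stM_g$ (so that $\langle\omega_\pi,\omega_\pi\rangle\simeq\lambda^{\otimes 12}$), one obtains
\[
\langle\shL,\omega_\pi\rangle\simeq\det\pi_*(\shL\otimes\omega_\pi)\otimes d_\pi(\shL)^{-1},\qquad \langle\shL,\shL\rangle\simeq d_\pi(\shL)\otimes\det\pi_*(\shL\otimes\omega_\pi)\otimes\lambda^{-2}.
\]
It then remains to expand
\[
d_\pi(\shL^n\otimes\omega_\pi^k)\simeq d_\pi(\shL)^{n}\otimes\lambda^{k}\otimes\langle\shL,\shL\rangle^{\binom{n}{2}}\otimes\langle\omega_\pi,\omega_\pi\rangle^{\binom{k}{2}}\otimes\langle\shL,\omega_\pi\rangle^{nk}\otimes d_\pi(\odi\shC)^{1-n-k},
\]
substitute, and collect exponents; this reproduces the displayed isomorphism, and it holds uniformly for $g\geq 2$ (the case $g=2$ being harmless here, since the formula is stated with definite integer exponents).

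The genuinely hard input, which I would simply quote, is the computation in \cite{melo-viviani} that $\Pic\jac_{d,g}$ has rank $3$ and is spanned by tautological classes: that argument compares $\jac_{d,g}$ with $\stM_g$ and its rigidification and rests on Arbarello--Cornalba's $\Pic\stM_g\simeq\Z\lambda$ for $g\geq 3$. The only point requiring care on our side is $g=2$, where $\Pic\stM_2\simeq\Z/10$ is not torsion free: there the relations above must be used as honest isomorphisms of line bundles — which is why I stated them in their refined form, not merely at the level of first Chern classes — and the relation $(\det\pi_*\omega_\pi)^{10}$ must be carried through the bookkeeping of step (iii); once the case $g\geq 3$ is in hand this is routine.
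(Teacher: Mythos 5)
Your proposal is correct, but it does more work than the paper does: the paper's proof of this theorem is a pure citation, translating \cite[Theorem A and 5.2, Notation 1.5, Remark 5.3]{melo-viviani} into the notation of \ref{notation of picard universal jacobian big genus} — in particular the displayed formula for $\det \pi_*(\shL^n\otimes\omega_\pi^k)$ is itself quoted from melo--viviani, not re-proved. You instead quote only the structural result (rank, tautological generation, the torsion in genus $2$) and re-derive the formula from Deligne's functorial Riemann--Roch, bi-additivity of the Deligne pairing, and the Mumford isomorphism $\langle\omega_\pi,\omega_\pi\rangle\simeq(\det\pi_*\omega_\pi)^{12}$; I checked your bookkeeping and the exponents $6k^2-6k-n^2+1$, $-nk+n(n+1)/2$, $nk+n(n-1)/2$ do come out right from $\langle\shL,\omega_\pi\rangle\simeq\det\pi_*(\shL\otimes\omega_\pi)\otimes d_\pi(\shL)^{-1}$, $\langle\shL,\shL\rangle\simeq d_\pi(\shL)\otimes\det\pi_*(\shL\otimes\omega_\pi)\otimes(\det\pi_*\omega_\pi)^{-2}$ and $d_\pi(\odi\shC)\simeq d_\pi(\omega_\pi)\simeq\det\pi_*\omega_\pi$. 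What your route buys is self-containedness of the formula (and of the change of basis from melo--viviani's generators to the three classes used here, which is exactly what is needed to see that in genus $2$ the only relation among them is $(\det\pi_*\omega_\pi)^{10}$, the point you call routine); what the paper's route buys is brevity and the avoidance of the Deligne-pairing formalism, which is never used elsewhere in the paper. One caution if you pursue your version: all the identities must be used as genuine isomorphisms of line bundles (Deligne's Riemann--Roch and the Mumford isomorphism do hold at that level), since for $g=2$ the Picard group has torsion and Chern-class arguments with rational coefficients would not suffice — you noted this correctly.
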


\begin{proof}
 Taking into account \ref{notation of picard universal jacobian big genus}, everything follows from \cite[Theorem A and 5.2, Notation 1.5, Remark 5.3]{melo-viviani}.
\end{proof}

\subsection{Genus zero case.}
In this subsection we consider $g=0$, while $d$ is any integer. We will prove the following:

\begin{prop}\label{picard jacobian genus zero}
 Let $\pi\colon \shP\arr \jac_{d,0}$ be the universal curve and $\shL$ be the universal invertible sheaf
 over $\shP$. If $d$ is even then $\Pic \jac_{d,0}$ is freely generated by $\shL_0=\pi_*(\shL\otimes \omega_\pi^{d/2})$ and we have an isomorphism
 \begin{displaymath}
  \det \pi_*(\shL^n \otimes \omega_\pi^k) \simeq \shL_0^{n\max\{nd-2k+1,0\}} \text{ for all } n,k\in \Z
\end{displaymath}
If $d$ is odd then $\Pic \jac_{d,0}$ is freely generated by $\shL_0 = \det \pi_* (\shL \otimes \omega_\pi^{(d-1)/2})$ and we have an isomorphism
\begin{displaymath}
  \det \pi_*(\shL^n \otimes \omega_\pi^k) \simeq \shL_0^{n\max\{nd-2k+1,0\}/2} \text{ for all } n,k\in \Z
\end{displaymath}
\end{prop}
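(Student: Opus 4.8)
The plan is to reduce to $d\in\{0,1\}$ by twisting with a power of $\omega_\pi$, identify $\jac_{0,0}$ and $\jac_{1,0}$ with the classifying stacks $\Bi_{\stM_0}\Gm$ and $\Bi\GL_2$, and then read everything off from $\Gm$-weights via \ref{picard of BG}. \emph{Reduction to $d\in\{0,1\}$.} Put $e=\lfloor d/2\rfloor$ and $\epsilon=d-2e\in\{0,1\}$. Since $\omega_\pi$ has degree $2g-2=-2$ on the fibres of a genus-$0$ curve, the assignment $(C,\shQ)\mapsto(C,\shQ\otimes\omega_\pi^{e})$ defines an isomorphism $\jac_{d,0}\simeq\jac_{\epsilon,0}$, with inverse $(C,\shR)\mapsto(C,\shR\otimes\omega_\pi^{-e})$. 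If $\shL_\epsilon$ denotes the pullback to $\jac_{d,0}$ of the universal invertible sheaf over $\jac_{\epsilon,0}$, then the universal sheaf $\shL$ over $\shP$ satisfies $\shL\simeq\shL_\epsilon\otimes\omega_\pi^{-e}$; in particular $\shL\otimes\omega_\pi^{e}\simeq\shL_\epsilon$, so the $\shL_0$ of the statement coincides with the corresponding object for $\jac_{\epsilon,0}$, while $\shL^n\otimes\omega_\pi^k\simeq\shL_\epsilon^n\otimes\omega_\pi^{k-ne}$ and $nd-2k=n\epsilon-2(k-ne)$. Hence both assertions for general $d$ follow from the cases $d=0$ and $d=1$ by the substitution $k\mapsto k-ne$, and we may assume $d\in\{0,1\}$.

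\emph{The case $d=0$.} A degree-$0$ invertible sheaf on a genus-$0$ curve over a field is trivial (by Riemann--Roch and Serre duality it has a nonzero global section, which is necessarily an isomorphism), so $\Picsh^0_{\stM_{0,1}/\stM_0}\simeq\stM_0$ and, by \ref{smoothness of the universal Jacobian}, $\jac_{0,0}\arr\stM_0$ is a $\Gm$-gerbe; the object $(C,\odi C)$ is a section of it, so the gerbe is trivial and $\jac_{0,0}\simeq\Bi_{\stM_0}\Gm$. Since $\stM_0\simeq\Bi\PGL_2$ and $\PGL_2$ has no nontrivial characters, \ref{picard of BG} gives $\Pic\stM_0=0$ and $\Pic\jac_{0,0}\simeq\Hom(\Gm,\Gm)=\Z$, the isomorphism recording the $\Gm$-weight. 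The universal sheaf $\shL$ has weight $1$ (the band $\Gm=\Autsh(\odi C)$ acts on it by scalars) and degree $0$, so by the first case of \ref{cbs for elliptic curves} the sheaf $\shL_0=\pi_*\shL$ is invertible and the canonical map $\pi^*\shL_0\arr\shL$ is an isomorphism; thus $\shL_0$ has weight $1$ and freely generates $\Pic\jac_{0,0}$. For the formula, $\shL^n\otimes\omega_\pi^k$ has weight $n$ (the sheaf $\omega_\pi$ is pulled back from the universal curve over $\stM_0$, hence has weight $0$) and degree $nd-2k$; as $nd-2k>2g-2$ or $nd-2k<0$, the third case of \ref{cbs for elliptic curves} makes $\pi_*(\shL^n\otimes\omega_\pi^k)$ locally free of rank $\max\{nd-2k+1,0\}$, so its determinant has weight $n\max\{nd-2k+1,0\}$; since $\Pic\stM_0=0$ a line bundle on $\Bi_{\stM_0}\Gm$ is determined by its weight, whence $\det\pi_*(\shL^n\otimes\omega_\pi^k)\simeq\shL_0^{\,n\max\{nd-2k+1,0\}}$.

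\emph{The case $d=1$.} For $(C,\shQ)$ over $S$ with $\deg\shQ=1$, the third case of \ref{cbs for elliptic curves} shows $V:=\pi_*\shQ$ is locally free of rank $2$ with formation commuting with base change; since $\shQ$ restricts to $\odi{}(1)$ on every geometric fibre, the canonical map $\pi^*V\arr\shQ$ is surjective and the induced $S$-morphism $C\arr\PP(V)$ is an isomorphism on geometric fibres, hence an isomorphism, carrying $\shQ$ to $\odi{\PP(V)}(1)$ (compare \ref{degree one sheaves and sections}). This identifies $\jac_{1,0}$ with the stack of rank-$2$ vector bundles, i.e. $\jac_{1,0}\simeq\Bi\GL_2$, under which the universal curve over $\jac_{1,0}$ is $\PP(V)$ for the universal rank-$2$ bundle $V$ and $\shL\simeq\odi{\PP(V)}(1)$. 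By \ref{picard of BG}, $\Pic\Bi\GL_2\simeq\Hom(\GL_2,\Gm)=\Z$, generated by $\det$, and $\shL_0=\det\pi_*\shL=\det V$ is a generator. Writing $\omega_\pi\simeq\odi{\PP(V)}(-2)\otimes\pi^*\det V$ we obtain $\shL^n\otimes\omega_\pi^k\simeq\odi{\PP(V)}(n-2k)\otimes\pi^*(\det V)^k$, hence for $n-2k\geq 0$ (the other case being trivial) $\pi_*(\shL^n\otimes\omega_\pi^k)\simeq\Sym^{n-2k}V\otimes(\det V)^k$; using $\det\Sym^{m}W\simeq(\det W)^{m(m+1)/2}$ for $W$ of rank $2$, its determinant equals $(\det V)^{(n-2k+1)(n-2k)/2+k(n-2k+1)}=(\det V)^{n(n-2k+1)/2}=\shL_0^{\,n\max\{n-2k+1,0\}/2}$. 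With the reduction this gives the statement for all odd $d$.

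\emph{The main obstacle.} The content is concentrated in the two structural identifications $\jac_{0,0}\simeq\Bi_{\stM_0}\Gm$ and $\jac_{1,0}\simeq\Bi\GL_2$ (and in $\stM_0\simeq\Bi\PGL_2$ with $\Pic\Bi\PGL_2=0$); once these are in place the Picard groups are immediate from \ref{picard of BG}, and the two formulas are bookkeeping with $\Gm$-weights and with $\det\Sym^{m}$ of a rank-$2$ bundle. The delicate point is to keep all normalizations consistent — the exact $\Gm$-weight of $\shL$, the precise form of $\omega_{\PP(V)/S}$, and whether one lands on $\shL_0$ or its dual — so that the exponents, and in particular the factor $\tfrac12$ in the odd case, come out exactly as stated.
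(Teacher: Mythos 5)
Your proposal is correct and follows essentially the same route as the paper: twist by a power of $\omega_\pi$ to reduce to $d\in\{0,1\}$, identify $\jac_{0,0}\simeq\Bi_{\stM_0}\Gm$ and $\jac_{1,0}\simeq\Bi\GL_2$ via pushforward/pullback and $\PP(\pi_*\shQ)$, and read off the Picard groups from \ref{picard of BG} together with $\Pic\stM_0=0$. The only (harmless) deviation is in the final bookkeeping: where the paper applies the projection formula and, in the odd case, an induction on the Euler sequence to get $\det\pi_*(\shT^q)\simeq\shL_0^{q(q+1)/2}$, you compute $\pi_*(\shL^n\otimes\omega_\pi^k)\simeq\Sym^{n-2k}V\otimes(\det V)^k$ directly on $\Bi\GL_2$ and use $\det\Sym^m$ of a rank-$2$ bundle, which gives the same exponents.
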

We will need the following lemma, whose proof is standard and thus omitted.

\begin{lemma}\label{characters of GL two and PGL two}
 The group $\duale{(\GL_2)}$ is freely generated by $\det \colon \GL_2\arr \Gm$, while $\duale{(\PGL_2)}=0$
\end{lemma}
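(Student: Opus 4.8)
The plan is the classical reduction: determine $\duale{(\GL_2)}$ by killing the derived subgroup, then deduce $\duale{(\PGL_2)}=0$ from it.

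First I would observe that any character $\chi\colon\GL_2\arr\Gm$ is trivial on the derived subgroup $[\GL_2,\GL_2]$, because $\Gm$ is commutative, and that $[\GL_2,\GL_2]=\operatorname{SL}_2$ as group schemes over any field. For the latter point, $\operatorname{SL}_2$ is generated by its two unipotent root subgroups $U^{+}$ and $U^{-}$ (the elementary matrices $E_{12}(s)=I+s\,e_{12}$ and $E_{21}(s)=I+s\,e_{21}$), and each of these lies in $[\GL_2,\GL_2]$: conjugating $E_{12}(s)$ by $\operatorname{diag}(a,1)$ yields $E_{12}(as)$, so the commutator $[\operatorname{diag}(a,1),E_{12}(s)]=E_{12}((a-1)s)$ ranges over all of $U^{+}$ as $a,s$ vary, and symmetrically for $U^{-}$; conversely $[\GL_2,\GL_2]\subseteq\operatorname{SL}_2$ because $\det$ is multiplicative. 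Hence every $\chi$ factors through $\GL_2/\operatorname{SL}_2$, which $\det$ identifies with $\Gm$, so $\chi=\det^m$ for a unique $m\in\Z$; uniqueness, and the absence of torsion, follow by evaluating on $\operatorname{diag}(t,1)$, where $\det^m$ restricts to the character $t\mapsto t^m$ of $\Gm$, which is trivial only for $m=0$. Thus $\duale{(\GL_2)}=\Z\langle\det\rangle$.

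For $\PGL_2=\GL_2/\Gm$ (quotient by the scalar matrices), a character pulls back along $\GL_2\twoheadrightarrow\PGL_2$ to a character of $\GL_2$ that vanishes on the central $\Gm$; by the previous step this pullback equals $\det^m$ for some $m$, and since $\det(\lambda I)=\lambda^2$, vanishing on the scalars forces the character $\lambda\mapsto\lambda^{2m}$ of $\Gm$ to be trivial, i.e. $m=0$. Therefore $\duale{(\PGL_2)}=0$. (Equivalently, $\PGL_2$ is semisimple, hence perfect, so it carries no nontrivial characters at all.)

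Since the statement is elementary I do not expect a genuine obstacle; the only point deserving a line of care is the identity $[\GL_2,\GL_2]=\operatorname{SL}_2$ at the level of group schemes, valid in every characteristic via the root-subgroup description above, after which the computation of characters is purely formal.
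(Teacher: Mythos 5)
Your proof is correct, and there is nothing in the paper to compare it against: the authors explicitly omit the proof of this lemma as standard, and your argument (characters kill the derived subgroup, hence factor through $\det$, and for $\PGL_2$ the pullback along $\GL_2\arr\PGL_2$ must be a power of $\det$ trivial on the scalars) is exactly the standard one. One small point of care: the identity $[\GL_2,\GL_2]=\operatorname{SL}_2$ should be understood sheaf-theoretically (as fppf group sheaves), not on $R$-points for every ring $R$ --- for instance $\GL_2(\F_2)\simeq S_3$ is not perfect, and your commutator $E_{12}((a-1)s)$ does not reach all of $U^+$ over $\F_2$. This does not harm the argument, since all you actually use is that a character $\chi\colon\GL_2\arr\Gm$ is trivial on the root subgroups $U^\pm\simeq\mathbb{G}_a$, which follows either from your commutator identity evaluated at a schematically dense set of points or, even more directly, from $\Hom(\mathbb{G}_a,\Gm)=0$ over a field (the units of $k[s]$ are constants); after that the identification of $\duale{(\GL_2)}$ with $\Z\cdot\det$ and the vanishing of $\duale{(\PGL_2)}$ go through exactly as you wrote.
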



\begin{remark}\label{degree zero in genus zero}
 Let $\pi\colon \shP\arr \stS$ be a curve of genus $0$ over an algebraic stack and $\shQ$ be an invertible sheaf on $\shP$. If $\shQ$ has degree $0$, by
 \ref{cbs for elliptic curves} it follows that $\pi_* \shQ$ is an invertible sheaf, it satisfies base change and the map
 $\pi^* \pi_* \shQ\arr \shQ$ is an isomorphism because it is so on the geometric fibers.
 
 If $\shQ$ has degree $1$, by \ref{cbs for elliptic curves} it follows that $\pi_*\shQ$ is a rank $2$ locally free sheaf, it satisfies base change and
 the map $\pi^*\pi_* \shQ \arr \shQ$ is surjective because it is so on the geometric fibers. In particular we obtain an isomorphism
 $\shP \arr \PP(\pi_*\shQ)$: the pullback of $\odi {\PP(\pi_*\shQ)}(1)$ is $\shQ$ and therefore we get the Euler sequence
 \begin{displaymath}
  0\arr \omega_\pi \otimes \shQ \arr \pi^* \pi_* \shQ \arr \shQ \arr 0
 \end{displaymath}
\end{remark}

\begin{proof}[Proof of \ref{picard jacobian genus zero}]
Since $\stM_0 \simeq \Bi \PGL_2$, by \ref{picard of BG} and \ref{characters of GL two and PGL two} we obtain $\Pic \stM_0 = 0$. In particular $\det \pi_*(\omega_\pi^k)$
is trivial for all $k\in \Z$ in $\Pic \jac_{d,0}$.

Assume $d$ even. Tensoring by $\omega_\pi^{d/2}$ yields an isomorphism $\jac_{d,0}\arr \jac_{0,0}$ over $\stM_0$. By \ref{degree zero in genus zero} we see that the functors
 $\jac_{0,0} \arr \Bi_{\stM_0}\Gm$ mapping $(P\arrdi{q} S,\shQ)$ to $(P,q_* \shQ)$ and $\Bi_{\stM_0}\Gm\arr \jac_{0,0}$ mapping $(P\arrdi{q}S,\shT)$ to
 $(P,q^*\shT)$ are quasi-inverses of each other. Moreover by \ref{picard of BG} we have $\Pic \Bi_{\stM_0} \Gm \simeq \Pic \stM_0 \oplus \Z\simeq \Z$,
 generated by the invertible sheaf given by the rule $(P,\shT)\longmapsto \shT$. The pullback of this sheaf via $\jac_{d,0}\simeq \jac_{0,0} \simeq \Bi_{\stM_0}\Gm$ is isomorphic to $\shL_0$, which therefore freely generates $\Pic \jac_{d,0}$. By \ref{degree zero in genus zero} we have $\shL\simeq \pi^* \shL_0 \otimes \omega_\pi^{-d/2}$ and, using projection formula,
\begin{displaymath}
 \det \pi_*(\shL^n \otimes \omega_\pi^k) \simeq \det (\shL_0^n\otimes \pi_*(\omega_\pi^{k-nd/2}))\simeq \shL_0^{n\rk \pi_*(\omega_\pi^{k-nd/2})}
\end{displaymath}
Finally by \ref{cbs for elliptic curves} we see that $\rk \pi_*(\omega_\pi^{k-nd/2})=\max\{nd-2k+1,0\}$.

Assume now that $d$ is odd. Tensoring by $\omega_\pi^{(d-1)/2}$ we get an isomorphism $\jac_{d,0}\arr \jac_{1,0}$. By \ref{degree zero in genus zero} we see that
the functors $\jac_{1,0} \arr \Bi \GL_2$ mapping $(P\arrdi q S,\shQ)$ to $q_*\shQ$ and $\Bi \GL_2 \arr \jac_{1,0}$ mapping $\E$ to $(\PP(\E),\odi {\PP(\E)}(1))$
are quasi-inverses of each other. Moreover by \ref{picard of BG} we have $\Pic \Bi \GL_2 \simeq \Z$ generated by the invertible sheaf given by the rule
$\E\longmapsto \det \E$. The pullback of this sheaf via $\jac_{d,0}\simeq\jac_{1,0}\simeq\Bi \GL_2$ is isomorphic to $\shL_0$. Set $\shT=\shL\otimes \omega_\pi^{(d-1)/2}$, so that $\shL_0 = \det \pi_* \shT$. Applying the determinant to
the Euler sequence in \ref{degree zero in genus zero} associated with $\shT$ we get an isomorphism
\begin{displaymath}
 \omega_\pi \simeq \pi^* \shL_0 \otimes \shT^{-2}
\end{displaymath}
Writing $\shL^n \otimes \omega_\pi ^k$ in terms of $\shT$ and $\shL_0$ and applying projection formula we obtain
\begin{displaymath}
 \det \pi_*(\shL^n \otimes \omega_\pi^k) \simeq \det(\shL_0^{k-n(d-1)/2}\otimes \pi_*(\shT^{nd-2k}))\simeq \shL_0^{(k-n(d-1)/2)\rk \pi_*(\shT^{nd-2k})}
 \otimes \det \pi_*(\shT^{nd-2k})
\end{displaymath}
By \ref{cbs for elliptic curves} we have that $\rk \pi_*(\shT^{nd-2k})=\max\{nd-2k+1,0\}$. Thus it suffices to prove the expression 
$\det \pi_*(\shT^q) \simeq \shL_0^{q(q+1)/2}$ for $q\geq 0$. Considering the Euler sequence in \ref{degree zero in genus zero} associated with $\shT$, 
replacing $\omega_\pi$ by $\pi^*\shL_0\otimes \shT^{-2}$ and tensoring by $\shT^q$ we get an exact sequence
\begin{displaymath}
 0\arr \pi^* \shL_0 \otimes \shT^{q-1} \arr \pi^* \pi_* \shT \otimes \shT^q \arr \shT^{q+1}\arr 0
\end{displaymath}
The pushforward $\pi_*$ of the above sequence for $q\geq 0$ is exact because $\R^1\pi_*(\pi^* \shL_0 \otimes \shT^{q-1})=0$ thanks to
\ref{cbs for elliptic curves}. Thus applying $\pi_*$, the determinant, the projection formula and using that $\rk \pi_* (\shT^r)=r+1$ for $r\geq -1$
we get an isomorphism
\begin{displaymath}
 \det \pi_*(\shT^{q+1})\simeq \shL_0 \otimes (\det \pi_*(\shT^q))^2\otimes (\det \pi_*(\shT^{q-1}))^{-1}
\end{displaymath}
It is now easy to check by induction that $\det \pi_*(\shT^q)\simeq \shL_0^{q(q+1)/2}$.
\end{proof}

\subsection{Genus one case.}
In this subsection we consider $g=1$ and $d>0$.
We will prove the following Theorem.

\begin{theorem}\label{picard group of Xd}
 Let $\pi\colon \E\arr\jac_{d,1}$ be the universal curve over $\jac_{d,1}$, $\shL$ be the universal invertible sheaf over $\E$ and assume $p\nmid d$. Then 
 $\Pic \jac_{d,1}$ is generated by $\pi_* \omega_\pi$ and $\det \pi_*\shL$ with the only relation $(\pi_*\omega_\pi)^{12}$.
 Moreover we have an isomorphism
 \[
 \det \pi_*(\shL^n \otimes \omega_\pi^k) \simeq (\det \pi_*\shL)^{n^2}\otimes (\pi_*\omega_\pi)^{dnk+(n-1)(dn-2n-2)/2}\text{ for }n>0,k\in \Z
 \]
\end{theorem}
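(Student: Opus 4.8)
The plan is to reduce the computation of $\Pic\jac_{d,1}$ to a group-theoretic computation on $\stM_{1,1}$, in the spirit of the introduction's outline. The key observation is that the functor $\jac_{d,1}\arr\stM_{1,1}$ sending $(E,\shQ)$ to $(\Picsh^0_E,[\odi E])$ presents $\jac_{d,1}$ as a trivial gerbe $\Bi_{\stM_{1,1}}G_d$ for a suitable smooth affine group scheme $G_d$ over $\stM_{1,1}$ (this is announced around \ref{properties of Gd}, and I would establish it here by exhibiting both the section and the banding). Granting this, \ref{picard of BG} gives $\Pic\jac_{d,1}\simeq \Pic\stM_{1,1}\oplus\duale{G_d}$. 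Since $\Pic\stM_{1,1}\simeq\Z/12\Z$ (Mumford's theorem, valid in the stated characteristics since $p\nmid d$ forces $p\neq 2,3$ when $d$ is divisible by $6$, and more care is needed in small characteristic — see the obstacle below), and the generator pulls back to $\pi_*\omega_\pi$ via the structure map $\jac_{d,1}\arr\stM_{1,1}\arr\stM_g$, the relation $(\pi_*\omega_\pi)^{12}$ is immediate once we know $G_d$ has free character group of rank $1$ and that $\det\pi_*\shL$ generates the $\duale{G_d}$-summand.

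First I would set up the gerbe structure: given a genus one curve $\pi\colon E\arr S$ with a degree $d$ sheaf $\shQ$, the Jacobian $\Picsh^0_E$ is an elliptic curve over $S$ (using \ref{cbs for elliptic curves} and \ref{multiplication by two on jacobians}), and I would identify $G_d$ with the sheaf of automorphisms of $(E,\shQ)$ over a fixed elliptic curve — concretely a Theta-type extension of $(\Picsh^0_E)[d]$-related data by $\Gm$. The vanishing $\duale{G_d}$ is free of rank one is the crux: I would compute it by restricting to a geometric fiber, where $G_d$ becomes a finite Heisenberg/Theta group, and argue that its characters, extended over the base, contribute exactly one free generator coming from the "multiplication by $\shQ$" direction. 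Then I would check that $\det\pi_*\shL$, which by \ref{cbs for elliptic curves} (applied with $d>2g-2=0$) is an invertible sheaf since $\pi_*\shL$ has rank $d$, is \emph{not} a power of the pullback of $\pi_*\omega_\pi$ — equivalently that its image in $\duale{G_d}$ is a generator — by evaluating on a test family, e.g. pulling back along a cover where $\shQ$ becomes $\shQ_0^{\otimes d'}$-twisted.

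For the isomorphism $\det\pi_*(\shL^n\otimes\omega_\pi^k)\simeq(\det\pi_*\shL)^{n^2}\otimes(\pi_*\omega_\pi)^{dnk+(n-1)(dn-2n-2)/2}$, I would not argue abstractly but compute directly on the universal curve. Since $g=1$, $\pi^*\pi_*\omega_\pi\simeq\omega_\pi$ by \ref{cbs for elliptic curves}(2), so tensoring by powers of $\omega_\pi$ only changes things by pullbacks from the base, whose determinants are controlled by $\rk$ and by $\pi_*\omega_\pi$. The essential content is the case $k=0$: establish $\det\pi_*(\shL^n)\simeq(\det\pi_*\shL)^{n^2}\otimes(\pi_*\omega_\pi)^{\epsilon(n)}$ for the appropriate exponent $\epsilon(n)=(n-1)(dn-2n-2)/2$. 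I expect to do this by induction on $n$: the theta-group relations (or equivalently the seesaw/cube computations on the universal elliptic curve) give a recursion relating $\det\pi_*(\shL^{n+1})$, $\det\pi_*(\shL^n)$, $\det\pi_*(\shL^{n-1})$ and correction terms involving $\pi_*\omega_\pi$, analogous to the genus zero recursion in the proof of \ref{picard jacobian genus zero}; solving it yields the quadratic exponent. The exponent of $\pi_*\omega_\pi$ being defined only modulo $12$ (since $(\pi_*\omega_\pi)^{12}$ is trivial) is consistent with the formula being an honest integer; one should double-check $dnk+(n-1)(dn-2n-2)/2\in\Z$, which holds because $(n-1)(dn-2n-2)=(n-1)(dn-2n)-2(n-1)$ and $(n-1)(dn-2n)=n(n-1)(d-2)$ is even.

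\textbf{The main obstacle} is the precise structure of $G_d$ and the computation of $\duale{G_d}$: one must show the gerbe $\jac_{d,1}\arr\stM_{1,1}$ is trivial (neutral \emph{and} split as $\Bi G_d$), which requires producing a global section, and then that the character group contributes no torsion beyond what comes from $\stM_{1,1}$ itself — i.e. that the Heisenberg-type group $G_d$ has characters forming a free rank-one group after extending over the base. A secondary subtlety is keeping track of characteristic hypotheses: the hypothesis $p\nmid d$ is exactly what makes $(\Picsh^0_E)[d]$ étale (\ref{multiplication by two on jacobians}) so that $G_d$ is smooth and its character computation behaves well; I would flag that Mumford's description of $\Pic\stM_{1,1}$ and the relation $(\pi_*\omega_\pi)^{12}$ are being used in whatever characteristics the hypothesis $p\nmid d$ permits.
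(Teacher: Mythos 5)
Your route is the same as the paper's: view $\jac_{d,1}\arr\stM_{1,1}$, $(E,\shQ)\mapsto(\Picsh^0_E,[\odi E])$, as a trivial gerbe $\Bi_{\stM_{1,1}}G_d$ with $G_d$ a Theta-type extension $1\arr\Gm\arr G_d\arr\E[d]\arr 1$, split $\Pic\jac_{d,1}\simeq\Pic\stM_{1,1}\oplus\duale G_d$ by \ref{picard of BG}, quote Mumford for the $\Z/12\Z$ factor, and obtain the determinant formula by a recursion on the universal elliptic curve (the paper's \ref{computation of line bundles on the universal elliptic curve} and \ref{computation of det pi star L to the n}); that last part of your plan is fine.

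However, there is a genuine gap at what you yourself identify as the crux, the computation of $\duale G_d$, and the method you propose — restricting to a geometric fiber — cannot work. Over a geometric point $G_d$ is Mumford's Theta group $\shG(\odi E(dp))$; its commutator subgroup is $\mu_d\subseteq \Gm$, so its character group is far from free of rank one: every character of $E[d]\simeq(\Z/d\Z)^2$ yields a character of the fiber killing the central $\Gm$. Freeness of $\duale G_d$ is therefore not a fiberwise statement. The real content is global: the restriction map $\duale G_d\arr\duale\Gm\simeq\Z$ has kernel $\Hom_{\stM_{1,1}}(\E[d],\Gm)$, and one must prove this vanishes over $\stM_{1,1}$. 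The paper does this in \ref{torsion groups are autodual} and \ref{torsion elliptic has no sections}, identifying $\Homsh(\E[d],\Gm)\simeq\E[d]$ via the commutator (Weil) pairing and then showing the universal $\E[d]$ admits no nonzero section over $\stM_{1,1}$ — a monodromy-flavoured argument carried out with explicit curves ($y^2=g(x)$ with $g$ irreducible for $d=2$, a quadratic-twist argument for odd prime $d$). Nothing in your outline supplies this step. Relatedly, to conclude that $\pi_*\omega_\pi$ and $\det\pi_*\shL$ generate, it is not enough to check (by your ``test family'', the paper's \ref{computation of det pi star L to the n}) that the $\duale G_d$-component of $\det\pi_*\shL$ has central weight $d$; you also need the upper bound that \emph{every} character of $G_d$ has central weight divisible by $d$, which the paper gets from the commutators being primitive $d$-th roots of unity (\ref{image of dual Gd is dZ}) and which is likewise absent from your plan.
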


The starting point is the well known Theorem of Mumford,
later generalized by Fulton and Olsson. (See \cite{Mumford1963} and \cite{Fulton2010})
\begin{theorem}[Mumford, Fulton, Olsson]\label{Mumford's theorem}
 The Picard group of $\stM_{1,1}$ is cyclic of order $12$ and it is generated by $\pi_*\omega_\pi$, where
 $\pi\colon \E\arr\stM_{1,1}$ is the universal curve.
\end{theorem}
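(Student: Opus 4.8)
The plan is to present $\stM_{1,1}$ as an open substack of a smooth quotient stack with a visibly cyclic Picard group, and then cut down by the discriminant using Proposition~\ref{relations picard group smooth stacks}. I carry this out first when $\car k \neq 2,3$, where every elliptic curve admits a short Weierstrass model $y^2 = x^3 + ax + b$, unique up to the coordinate change $(x,y)\mapsto (u^2 x, u^3 y)$ with $u \in \Gm$. This identifies $\stM_{1,1}\simeq [U/\Gm]$, where $U = \{(a,b)\in \A^2 : \Delta \neq 0\}$ with $\Delta = -16(4a^3+27b^2)$ and $\Gm$ acts on $\A^2$ linearly with weights $(4,6)$.

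Set $\stX = [\A^2/\Gm]$, a smooth and integral algebraic stack containing $\stM_{1,1}$ as the open complement of the discriminant locus $Z(\Delta)$. Since the $\Gm$-action on $\A^2$ is linear, the projection $\stX \arr \Bi \Gm$ is a rank $2$ vector bundle; hence by the first item of Proposition~\ref{relations picard group smooth stacks} and by Remark~\ref{picard of BG} we get $\Pic\stX \simeq \Pic \Bi\Gm \simeq \Hom(\Gm,\Gm)\simeq \Z$. To pin down the generator I note that the relative dualizing sheaf of the Weierstrass family over $\A^2$ is invertible and $\pi_*\omega_\pi$ is spanned by the invariant differential $\omega = dx/y$; since $\omega \mapsto u^{-1}\omega$ under the coordinate change, $\pi_*\omega_\pi$ corresponds to the character $u\mapsto u^{-1}$, which is a generator of $\Hom(\Gm,\Gm)$. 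Thus $\pi_*\omega_\pi$ generates $\Pic\stX$.

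Next I apply the cut-down. The function $\Delta$ transforms by $u^{-12}$, so it is a global section of $(\pi_*\omega_\pi)^{12}$ on $\stX$, with zero locus $Z(\Delta)$. The affine curve $\{4a^3+27b^2=0\}\subset \A^2$ is irreducible and reduced (viewed as a quadratic in $b$ over $k(a)$ it is irreducible because $a^3$ is not a square), so $Z(\Delta)$ is integral. By the last item of Proposition~\ref{relations picard group smooth stacks} the restriction map gives an isomorphism $\Pic\stX/\langle (\pi_*\omega_\pi)^{12}\rangle \simeq \Pic\stM_{1,1}$. Since $\Pic\stX \simeq \Z$ is generated by $\pi_*\omega_\pi$ and the removed class is $(\pi_*\omega_\pi)^{12} = 12\cdot[\pi_*\omega_\pi]$, I conclude $\Pic\stM_{1,1}\simeq \Z/12\Z$, generated by $\pi_*\omega_\pi$.

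Finally, to reach arbitrary characteristic one replaces the short model by the general Weierstrass equation $y^2+a_1xy+a_3y = x^3+a_2x^2+a_4x+a_6$ and the group $G = \Gm \ltimes \mathbb{G}_a^3$ of admissible coordinate changes, so that $\stM_{1,1}\simeq [U/G]$ with $U\subset \A^5$ the nonvanishing locus of the (weight $12$) discriminant; since the unipotent factor $\mathbb{G}_a^3$ carries no nontrivial characters, $\Hom(G,\Gm)=\Hom(\Gm,\Gm)=\Z$, and the argument proceeds as before. The main obstacle is precisely this general case: the $\mathbb{G}_a^3$-action on $\A^5$ is no longer linear, so $[\A^5/G]\arr \Bi G$ is only an affine-space bundle and the vector-bundle shortcut must be replaced by the direct identification $\Pic[\A^5/G]\simeq \Pic^G\A^5 \simeq \Hom(G,\Gm)$ (using that $\A^5$ has trivial Picard group and only constant invertible functions); this, together with the irreducibility of the full discriminant hypersurface, is the technical heart handled by Fulton and Olsson.
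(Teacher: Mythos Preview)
The paper does not give its own proof of this theorem: it is stated as a known result and attributed to Mumford (for $\car k\nmid 6$) and to Fulton--Olsson (in general), with citations to \cite{Mumford1963} and \cite{Fulton2010}. So there is nothing in the paper to compare your argument against line by line.

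That said, your sketch is essentially the standard proof and is correct. The presentation $\stM_{1,1}\simeq [U/\Gm]$ with weights $(4,6)$ in characteristic $\neq 2,3$, the identification $\Pic[\A^2/\Gm]\simeq \Hom(\Gm,\Gm)$ via Proposition~\ref{relations picard group smooth stacks} and Remark~\ref{picard of BG}, and the excision of the integral discriminant divisor of weight $12$ are exactly Mumford's original computation. Your remarks on the general case (full Weierstrass form, the group $\Gm\ltimes\mathbb{G}_a^3$, and the need to argue directly that $\Pic^G\A^5\simeq\Hom(G,\Gm)$ because the $\mathbb{G}_a^3$-action is not linear) accurately summarize what Fulton--Olsson do. One small point: with your convention $(x,y)\mapsto(u^2x,u^3y)$ the induced action on $(a,b)$ has weights $(-4,-6)$ rather than $(4,6)$, and correspondingly $\Delta$ has weight $-12$; this does not affect the conclusion, since either way $\Delta$ is a section of the $12$th power of a generator.
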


We will proceed by showing that $\jac_{d,1}$ is isomorphic to $\Bi_{\stM_{1,1}} G_d$, for a certain group scheme $G_d$ over $\stM_{1,1}$.
In particular we will conclude that $\Pic\jac_{d,1} \simeq \Pic \stM_{1,1}\oplus \Hom(G_d,\Gm)$ and we will conclude the section by computing the group
of characters $\Hom(G_d,\Gm)$.

\begin{lemma}\label{elliptic curves and pic}
  Let $\pi\colon \E \arr \stS$ be a genus one curve over an algebraic stack. Then the functor
  \[
  \begin{tikzpicture}[xscale=4.0,yscale=-0.6]
    \node (A0_0) at (0, 0) {$\E$};
    \node (A0_1) at (1, 0) {$\Picsh^1_{\E/\stS}$};
    \node (A1_0) at (0, 1) {$(\delta\colon T\arr \E\times_\stS T)$};
    \node (A1_1) at (1, 1) {$[\odi{\E\times_\stS T}(\delta)]$};
    \path (A0_0) edge [->]node [auto] {$\scriptstyle{\Omega}$} (A0_1);
    \path (A1_0) edge [|->,gray]node [auto] {$\scriptstyle{}$} (A1_1);
  \end{tikzpicture}
  \]
is an isomorphism. If $\sigma\colon \stS\arr \E$ is a section, then also the functor
  \[
  \begin{tikzpicture}[xscale=4.0,yscale=-0.6]
    \node (A0_0) at (0, 0) {$\E$};
    \node (A0_1) at (1, 0) {$\Picsh^0_{\E/\stS}$};
    \node (A1_0) at (0, 1) {$(\delta\colon T\arr E\times_\stS T)$};
    \node (A1_1) at (1, 1) {$[\odi{\E\times_\stS T}(\delta-\sigma\times_\stS T)]$};
    \path (A0_0) edge [->]node [auto] {$\scriptstyle{}$} (A0_1);
    \path (A1_0) edge [|->,gray]node [auto] {$\scriptstyle{}$} (A1_1);
  \end{tikzpicture}
  \]
is an isomorphism that sends $\sigma$ to $[\odi \E]$
\end{lemma}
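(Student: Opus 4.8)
The plan is to prove that $\Omega\colon \E \arr \Picsh^1_{\E/\stS}$ is an isomorphism of stacks over $\stS$ by exhibiting a quasi-inverse, and then deduce the degree-$0$ version by translating along the given section $\sigma$. The key observation is that $\Picsh^1_{\E/\stS}$ commutes with base change (as recalled in the Notations), so it suffices to construct the inverse functorially in families: given $S\arr \stS$ and a degree-$1$ invertible sheaf $\shQ$ on $\E\times_\stS S$, I want to produce canonically a section $S\arr \E\times_\stS S$. For this I would invoke Lemma \ref{degree one sheaves and sections} in the genus one case ($g=1$, $d=1$): after fppf base change one may assume $\shQ$ admits a section $s\in \shQ$ nonvanishing on the geometric fibers, because $\pi_*\shQ$ is invertible and formation commutes with base change, so there is a nowhere-zero section locally; the zero locus of $s$ is then a degree $1$ cover of $S$, i.e. a section $\tau\colon S\arr \E\times_\stS S$ with $\odi{}( \tau)\simeq \shQ$. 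Uniqueness of $\tau$ (two effective Cartier divisors of degree $1$ defining isomorphic invertible sheaves on a genus one curve coincide, since a nonconstant map to $\PP^1$ would force genus $0$) shows this construction descends and is independent of the chosen $s$, hence is a genuine functor $\Picsh^1_{\E/\stS}\arr \E$.

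Next I would check the two composites are isomorphic to the identity. Going $\E \arr \Picsh^1 \arr \E$: a section $\delta$ maps to $[\odi{}(\delta)]$ and back to the zero locus of the tautological section of $\odi{}(\delta)$, which is $\delta$ itself — this is essentially the content of the "standard arguments" in Lemma \ref{degree one sheaves and sections} identifying $\tau$ with the given section. Going $\Picsh^1 \arr \E \arr \Picsh^1$: $\shQ$ maps to $\tau$ and back to $[\odi{}(\tau)]$, and the isomorphism $\odi{}(\tau)\simeq \shQ$ from Lemma \ref{degree one sheaves and sections} furnishes the required $2$-isomorphism. One should also verify that $\Omega$ is representable, which is automatic once we know it is an equivalence of categories fibered in groupoids and both sides are algebraic, but in any case it is enough to have the quasi-inverse.

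For the second functor, observe that $\E \arr \Picsh^0_{\E/\stS}$ is the composite of $\Omega$ with the isomorphism $\Picsh^1_{\E/\stS}\arr \Picsh^0_{\E/\stS}$ given by tensoring with $\odi{}(-\sigma)$ (translation by the degree $-1$ class $[\odi{}(-\sigma)]$), which is an isomorphism of stacks since $[-\sigma]$ is an invertible element for the group structure on $\bigsqcup_d \Picsh^d$. Concretely a section $\delta$ goes to $[\odi{}(\delta)]\otimes [\odi{}(-\sigma)] = [\odi{}(\delta - \sigma)]$, which is exactly the stated rule; and the section $\sigma$ itself goes to $[\odi{}(\sigma-\sigma)]=[\odi{}]$, giving the last assertion.

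The main obstacle is the descent/uniqueness argument for the inverse functor: the section $s$ trivializing $\shQ$ (up to the degree consideration) exists only fppf-locally, so one must argue that the resulting section $\tau$ of $\E\times_\stS S$ is independent of this choice and glues. The cleanest way is to prove uniqueness of the effective Cartier divisor of degree $1$ with prescribed associated invertible sheaf — i.e. that $\tau$ is determined by $[\shQ]$ alone — which reduces, by base change to geometric points, to the statement that on a genus one curve over an algebraically closed field, $\odi{}(P)\simeq \odi{}(P')$ implies $P=P'$; this follows because otherwise $h^0(\odi{}(P))\geq 2$ would give a degree $1$ map to $\PP^1$, contradicting $g=1$ via Riemann–Hurwitz. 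Everything else is formal manipulation with $\Picsh$ and its base-change compatibility, or is already packaged in Lemma \ref{degree one sheaves and sections} and Lemma \ref{cbs for elliptic curves}.
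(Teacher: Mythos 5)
Your overall route coincides with the paper's: surjectivity (fppf-locally) is obtained exactly as in the paper, by using Lemma \ref{cbs for elliptic curves} to make $\pi_*\shQ$ invertible and trivial, taking the section corresponding to $1$, and applying Lemma \ref{degree one sheaves and sections} to get a section $\tau$ with $\odi{}(\tau)\simeq\shQ$; and the degree-$0$ statement is deduced from the degree-$1$ one by twisting by $\sigma$, as in the paper. Whether one phrases the conclusion as ``epimorphism plus monomorphism of fppf sheaves'' (the paper) or ``construct a quasi-inverse and check descent'' (you) is immaterial: in both cases everything hinges on the injectivity/uniqueness statement, namely that two sections $\delta,\delta'$ of $\E\times_\stS T\arr T$ with $[\odi{}(\delta)]=[\odi{}(\delta')]$ in $\Picsh^1_{\E/\stS}(T)$ must be equal.

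This is where your argument has a genuine gap. You claim this uniqueness ``reduces, by base change to geometric points'' to the fact that on a genus one curve over an algebraically closed field $\odi{}(P)\simeq\odi{}(P')$ forces $P=P'$. Equality of two sections (equivalently, of two degree-$1$ closed subschemes) of $\E\times_\stS T\arr T$ cannot be checked on geometric fibers: for instance over $T=\Spec k[\epsilon]/(\epsilon^2)$ two sections of a constant elliptic curve given by the origin and by a nonzero tangent vector at the origin agree on the unique geometric fiber but are distinct, so the proposed reduction would not distinguish them. (They are of course distinguished by their line bundle classes, but that is precisely the content one is trying to prove, not something the geometric-fiber check delivers.) The paper closes this step scheme-theoretically: $[\odi{}(\delta)]=[\odi{}(\delta')]$ gives, via Lemma \ref{cbs for elliptic curves}, an isomorphism $\odi{}(\delta)\simeq\odi{}(\delta')\otimes\pi^*\shQ$ for some invertible $\shQ$ on the base; pushing forward and using that both $\pi_*\odi{}(\delta)$ and $\pi_*\odi{}(\delta')$ are freely generated by the canonical sections $1$ (Lemma \ref{degree one sheaves and sections}) produces a trivialization of $\shQ$ and hence an isomorphism $\odi{}(\delta)\arr\odi{}(\delta')$ carrying $1$ to $1$, which forces $\delta=\delta'$ as effective Cartier divisors. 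Replacing your geometric-point reduction by this (or an equivalent seesaw/rigidity argument) repairs the proof; the rest of your write-up, including the well-definedness of the inverse once uniqueness is known and the twist by $\sigma$ for $\Picsh^0$, is fine.
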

\begin{proof}
 The last part of the statement follows from the first one. We start showing that $\Omega$ is an fppf epimorphism.
 Let $\chi\in \Picsh^1_{\E}(T)$, where $T$
is an $\stS$-scheme. We can replace $T$ by $\stS$ and assume the existence of an invertible sheaf $\shL$ of degree $1$ over $\E$
such that $\chi=[\shL]$. By \ref{cbs for elliptic curves} the sheaf
$\shQ=\pi_*\shL$ is invertible and we can assume it is trivial. Since $\pi_*\shL$ satisfies base change, the section
$s$ of $\shL$ corresponding to $1\in\pi_*\shL$ is always non-zero on the geometric fibers of $\pi$. Thus, by \ref{degree one sheaves and sections},
we get a section $\tau\colon \stS\arr\E$ with an isomorphism $\odi\E(\tau)\simeq \shL$, so that $\Omega(\tau)=\chi$.

We now prove that $\Omega$ is injective over an $\stS$-scheme $T$. Again we can replace $T$ by $\stS$, so that, in particular,
$\stS$ is a scheme. Let $\delta,\delta'\in \E(\stS)$ such that $[\odi \E(\delta')]=[\odi \E(\delta)]$.
By \ref{cbs for elliptic curves} we have an isomorphism 
\[
\odi \E(\delta)\arrdi \alpha \odi \E(\delta')\otimes \pi^*\shQ
\]
for some invertible sheaf $\shQ$ over $\stS$. Since $\pi_*\odi\E(\delta)$ and $\pi_*\odi\E(\delta')$ are freely generated 
by the respective sections $1$ by \ref{degree one sheaves and sections}, applying $\pi_*$ we get an element $x\in \shQ$ such that $\alpha(1)=1\otimes \pi^*x$ and that
freely generates $\shQ$. Thus we obtain an isomorphism $\odi \E(\delta)\arr\odi \E(\delta')$ mapping $1$ to $1$ and therefore that
$\delta=\delta'$.
\end{proof}

\begin{prop}\label{computation of line bundles on the universal elliptic curve}
 Let $\E\arrdi \pi \stS$ be a genus one curve over an algebraic stack with a section $\sigma$. Then we have isomorphisms
 \[
 \det \pi_*\odi \E(n\sigma)\simeq \sigma^* \odi \E(\sigma)^{n(n+1)/2-1} \text{ for } n\geq 1
 \]
 and
 \[
 \sigma^*\odi \E(\sigma)\simeq \duale{(\pi_*\omega_\pi)}
 \]
 Moreover if $\shL$ is an invertible sheaf over $\E$ of degree greater than zero and $\tau$ is another section (possibly equal to $\sigma$) we have an isomorphism
 \[
 \det \pi_*(\shL\otimes \odi \E(\tau)) \simeq \det \pi_*(\shL\otimes\odi \E(\tau-\sigma))\otimes \sigma^*\shL\otimes \sigma^*\odi \E(\tau)
 \]
\end{prop}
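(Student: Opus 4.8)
The plan is to deduce all three isomorphisms from a single device: twist the structure exact sequence $0\arr\odi\E(-\sigma)\arr\odi\E\arr\sigma_*\odi\stS\arr0$ of the section $\sigma$ by an invertible sheaf of sufficiently positive degree and push it forward along $\pi$. Concretely, for an invertible sheaf $\shM$ on $\E$ with $\deg\shM\geq2$, tensoring this sequence by $\shM$ and using the projection formula produces the short exact sequence
\[
0\arr\shM\otimes\odi\E(-\sigma)\arr\shM\arr\sigma_*(\sigma^*\shM)\arr0 .
\]
Since $\deg(\shM\otimes\odi\E(-\sigma))=\deg\shM-1>0=2g-2$, Lemma \ref{cbs for elliptic curves}(3) gives $\R^1\pi_*(\shM\otimes\odi\E(-\sigma))=0$ and that $\pi_*(\shM\otimes\odi\E(-\sigma))$ and $\pi_*\shM$ are locally free; pushing forward and using $\pi\circ\sigma=\id$ (so $\pi_*\sigma_*=\id$) yields a short exact sequence of vector bundles $0\arr\pi_*(\shM\otimes\odi\E(-\sigma))\arr\pi_*\shM\arr\sigma^*\shM\arr0$, hence
\[
\det\pi_*\shM\simeq\det\pi_*(\shM\otimes\odi\E(-\sigma))\otimes\sigma^*\shM .
\]

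Granting this, the third displayed isomorphism of the proposition is immediate: take $\shM=\shL\otimes\odi\E(\tau)$, which has degree $\deg\shL+1\geq2$, so that $\shM\otimes\odi\E(-\sigma)=\shL\otimes\odi\E(\tau-\sigma)$ and $\sigma^*\shM=\sigma^*\shL\otimes\sigma^*\odi\E(\tau)$. For the first isomorphism I would take $\shM=\odi\E(n\sigma)$ with $n\geq2$; using $\odi\E(n\sigma)\simeq\odi\E(\sigma)^n$, and hence $\sigma^*\odi\E(n\sigma)\simeq(\sigma^*\odi\E(\sigma))^n$, the formula reads
\[
\det\pi_*\odi\E(n\sigma)\simeq\det\pi_*\odi\E((n-1)\sigma)\otimes(\sigma^*\odi\E(\sigma))^n .
\]
An induction on $n$ then gives $\det\pi_*\odi\E(n\sigma)\simeq(\sigma^*\odi\E(\sigma))^{n(n+1)/2-1}$, the base case $n=1$ being $\pi_*\odi\E(\sigma)\simeq\odi\stS$: this last fact follows from Lemma \ref{degree one sheaves and sections}, because $\odi\E(\sigma)$ has degree $1$ and its canonical section vanishes exactly along $\sigma$ and so is nonzero on the geometric fibres.

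For the second isomorphism I would use relative adjunction for the effective Cartier divisor $\sigma\hookrightarrow\E$ inside the smooth curve $\pi\colon\E\arr\stS$: the relative dualizing sheaf of $\sigma\arr\stS$ is $\odi\stS$, so $\odi\stS\simeq\sigma^*\omega_\pi\otimes\sigma^*\odi\E(\sigma)$, i.e.\ $\sigma^*\odi\E(\sigma)\simeq\duale{(\sigma^*\omega_\pi)}$. Since $g=1$, Lemma \ref{cbs for elliptic curves}(2) gives $\pi^*\pi_*\omega_\pi\simeq\omega_\pi$, and pulling this back along $\sigma$ (so that $\sigma^*\pi^*=\id$) yields $\sigma^*\omega_\pi\simeq\pi_*\omega_\pi$, whence $\sigma^*\odi\E(\sigma)\simeq\duale{(\pi_*\omega_\pi)}$. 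The one delicate point is that adjunction is being invoked over an algebraic stack, but this is harmless because the claim is local on $\stS$ and then on $\E$, reducing it to the classical statement for schemes; alternatively one can bypass adjunction by pushing forward $0\arr\odi\E\arr\odi\E(\sigma)\arr\sigma_*(\sigma^*\odi\E(\sigma))\arr0$ and noting that $\pi_*\odi\E\arr\pi_*\odi\E(\sigma)$ is an isomorphism of line bundles (it is so on each geometric fibre by Riemann--Roch, and both sheaves satisfy base change), so the connecting map $\sigma^*\odi\E(\sigma)\arr\R^1\pi_*\odi\E\simeq\duale{(\pi_*\omega_\pi)}$ is forced to be an isomorphism. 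Beyond this, the only care needed is to keep the degree inequalities above valid so that Lemma \ref{cbs for elliptic curves} applies and the pushforward sequences stay short exact; I anticipate no real obstacle there.
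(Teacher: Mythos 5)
Your proposal is correct and follows essentially the same route as the paper: twist the structure sequence $0\arr\odi\E(-\sigma)\arr\odi\E\arr\sigma_*\odi\stS\arr 0$ by $\shL\otimes\odi\E(\tau)$, push forward using Lemma \ref{cbs for elliptic curves} to kill $\R^1$, take determinants, and get the first isomorphism by induction from the third together with $\pi_*\odi\E(\sigma)\simeq\odi\stS$. The only (harmless) divergence is your primary use of relative adjunction plus $\omega_\pi\simeq\pi^*\pi_*\omega_\pi$ for the middle isomorphism, whereas the paper reads it off the same pushforward sequence with $\shL=\odi\E$, $\tau=\sigma$ as a surjection of line bundles $\sigma^*\odi\E(\sigma)\arr\R^1\pi_*\odi\E\simeq\duale{(\pi_*\omega_\pi)}$ — your stated alternative is exactly this argument.
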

\begin{proof}
In what follows we will use \ref{cbs for elliptic curves} and, in particular, 
the isomorphism $\omega_\pi\simeq\pi^*\pi_*\omega_\pi$ without further comments.
Notice that the first isomorphism in the statement follows from the last one with $\tau=\sigma$, induction and the isomorphism
$\pi_*\odi \E(\sigma) \simeq \odi \stS$ (see \ref{degree one sheaves and sections}).

Consider an invertible sheaf $\shL$ on $\E$ of degree greater or equal than zero and a section $\tau$ of $\E$.
Tensoring by $\shL\otimes\odi \E(\tau)$ the exact sequence on $\E$
\[
0\arr \odi \E(-\sigma)\arr \odi \E \arr \sigma_*\odi \stS \arr 0 
\]
and applying $\pi_*$ we get an exact sequence
\[
0\arr \pi_*(\shL\otimes \odi \E(\tau-\sigma))\arr \pi_*(\shL\otimes \odi \E(\tau))\arr \sigma^*(\shL\otimes \odi \E(\tau))\arr 
\R^1\pi_*(\shL\otimes\odi \E(\tau-\sigma)) \arr  0
\]
If $\shL=\odi \E$ and $\tau=\sigma$, we obtain a surjection $\sigma^*\odi \E(\sigma)\arr \R^1\pi_*\odi \E\simeq \duale{(\pi_*\omega_\pi)}$ 
which is therefore an isomorphism. If $\shL$ has degree
strictly greater than $0$, the last term in the sequence is zero and taking the determinant we get the last isomorphism.
\end{proof}

\begin{prop}\label{fundamental map}
 The functor
  \[
  \begin{tikzpicture}[xscale=3.0,yscale=-0.6]
    \node (A0_0) at (0, 0) {$\stM_{1,1}$};
    \node (A0_1) at (1, 0) {$\jac_{d,1}$};
    \node (A1_0) at (0, 1) {$(E,\sigma)$};
    \node (A1_1) at (1, 1) {$(E,\odi E(d\sigma))$};
    \path (A0_0) edge [->]node [auto] {$\scriptstyle{}$} (A0_1);
    \path (A1_0) edge [|->,gray]node [auto] {$\scriptstyle{}$} (A1_1);
  \end{tikzpicture}
  \]
is an epimorphism in the fppf topology and it is a section of the functor
  \[
  \begin{tikzpicture}[xscale=3.0,yscale=-0.6]
    \node (A0_0) at (0, 0) {$\jac_{d,1}$};
    \node (A0_1) at (1, 0) {$\stM_{1,1}$};
    \node (A1_0) at (0, 1) {$(E,\shL)$};
    \node (A1_1) at (1, 1) {$(\Picsh^0_{E/S},[\odi E])$};
    \path (A0_0) edge [->]node [auto] {$\scriptstyle{}$} (A0_1);
    \path (A1_0) edge [|->,gray]node [auto] {$\scriptstyle{}$} (A1_1);
  \end{tikzpicture}
  \]
\end{prop}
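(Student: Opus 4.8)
The plan is to establish the two assertions separately, writing $s\colon\stM_{1,1}\arr\jac_{d,1}$ for the functor $(E,\sigma)\mapsto(E,\odi E(d\sigma))$ and $p\colon\jac_{d,1}\arr\stM_{1,1}$ for the functor $(E,\shL)\mapsto(\Picsh^0_{E/S},[\odi E])$. First I would check that $p\circ s$ is naturally isomorphic to $\id_{\stM_{1,1}}$, so that $s$ is a section of $p$; then I would check that $s$ is an epimorphism for the fppf topology, i.e. that every object of $\jac_{d,1}$ lies, fppf locally on its base, in the essential image of $s$. Both reduce at once to results from Section \ref{Preliminaries}.

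For the first point, observe that $p(s(E,\sigma))=(\Picsh^0_{E/S},[\odi E])$ involves no mention of $d$, since $p$ discards the invertible sheaf. By the second statement of \ref{elliptic curves and pic}, applied to the section $\sigma$, the assignment $\delta\mapsto[\odi E(\delta-\sigma)]$ is an isomorphism of genus one curves $E\arrdi{\sim}\Picsh^0_{E/S}$ over $S$ carrying $\sigma$ to $[\odi E]$; since \ref{elliptic curves and pic} asserts an isomorphism of functors, this is natural in $(E,\sigma)$ and provides the required $2$-isomorphism $p\circ s\simeq\id_{\stM_{1,1}}$. (One uses here that for a genus one curve $E\arr S$ the Picard sheaf $\Picsh^0_{E/S}$ is again a genus one curve, with neutral section $[\odi E]$.)

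For the second point, let $(E,\shL)$ be an object of $\jac_{d,1}$ over a scheme $S$, with structure map $\pi\colon E\arr S$. Giving, over an fppf cover $S'\arr S$, an object of $\stM_{1,1}$ mapping to $(E,\shL)|_{S'}$ amounts to giving a section $\sigma$ of $E_{S'}\arr S'$ with $\odi{E_{S'}}(d\sigma)\simeq\shL_{S'}$. Since $\deg\shL=d$, \ref{multiplication by two on jacobians} furnishes an fppf cover $S'\arr S$ and a degree $1$ invertible sheaf $\shM$ on $E_{S'}$ with $\shM^{\otimes d}\simeq\shL_{S'}$. By \ref{cbs for elliptic curves} the pushforward $\pi_*\shM$ is an invertible sheaf commuting with base change, so after replacing $S'$ by a Zariski open cover we may assume $\pi_*\shM$ is free; the section of $\shM$ corresponding to $1\in\pi_*\shM$ is then nowhere zero on the geometric fibers, and \ref{degree one sheaves and sections} yields a section $\sigma$ of $E_{S'}\arr S'$ with $\odi{E_{S'}}(\sigma)\simeq\shM$. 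Taking the $d$-th tensor power gives $\odi{E_{S'}}(d\sigma)\simeq\shL_{S'}$, so $(E_{S'},\sigma)$ is the desired lift and $s$ is an fppf epimorphism.

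I do not expect any genuine obstacle here: the proposition is a formal consequence of \ref{elliptic curves and pic}, \ref{cbs for elliptic curves}, \ref{degree one sheaves and sections} and \ref{multiplication by two on jacobians}. The only points needing a little care are the $2$-categorical bookkeeping in the first half---using \ref{elliptic curves and pic} as an isomorphism of \emph{functors}, so that $s$ is exhibited as an honest section of $p$ and not merely an objectwise right inverse---and, in the second half, the organization of the successive fppf and Zariski localizations used to pass from the class $[\shL]\in\Picsh^d_{E/S}$ to a genuine section $\sigma$ of $\pi$ with $\odi E(d\sigma)\simeq\shL$.
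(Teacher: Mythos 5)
Your proposal is correct and follows essentially the same route as the paper: the section property comes from \ref{elliptic curves and pic}, and fppf-surjectivity comes from writing $\shL$ fppf-locally as a $d$-th power via \ref{multiplication by two on jacobians} and converting the degree-one factor into a section. The only cosmetic difference is that where the paper simply cites the isomorphism $E\simeq\Picsh^1_{E/S}$ of \ref{elliptic curves and pic} to produce $\sigma$ with $[\shM]=[\odi E(\sigma)]$, you unfold that lemma's surjectivity argument (via \ref{cbs for elliptic curves} and \ref{degree one sheaves and sections}), which is the same content.
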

\begin{proof}
 The second part of the statement follows from \ref{elliptic curves and pic}. For the first one let $(E,\shL)\in \jac_{d,1}$. 
 By \ref{multiplication by two on jacobians} fppf locally we can write $\shL\simeq \shT^d$ for some degree $1$ invertible
 sheaf $\shT$ on $E$. By \ref{elliptic curves and pic} $E$ has a section $\sigma$ such that $[\shT]=[\odi E(\sigma)]$, which means that
 $\shT$ (resp. $\shL$) and $\odi E(\sigma)$ (resp. $\odi E(d\sigma)$) are fppf locally isomorphic.
\end{proof}

\begin{lemma}\label{torsors over a stack}
 Let $p\colon \stX\arr \stY$ be a map of stacks over a scheme $S$ with a section $s\colon \stY\arr\stX$ which is an fppf epimorphism and denote by
 $G$ the sheaf of groups over $\stY$ defined by
$$
  G(T\arrdi \xi \stY)=\Ker(\Aut_\stX(s(\xi))\arr \Aut_\stY(p s(\xi)))
$$
Then the functor $F\colon \stX\arr \Bi_\stY G$ which maps $\eta\colon T\arr\stX$ to the sheaf $F(\eta)$ over $T$ given by the inverse image of the identity section of $\Autsh_T(p(\eta))$ along the map
\begin{displaymath}
 \Isosh_T (\eta,sp(\eta))\arrdi p \Isosh_T(p(\eta),ps(p(\eta)))\simeq \Autsh_T (p(\eta))
\end{displaymath}
defines an equivalence of stacks.
\end{lemma}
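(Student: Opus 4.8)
The plan is to build a morphism $\Phi\colon \stX\arr \Bi_\stY G$ following the recipe in the statement and to prove it is an equivalence by checking that it is fully faithful and fppf-locally essentially surjective; since $\stX$ and $\Bi_\stY G$ are fppf stacks this is enough. Conceptually this is just the proof that a morphism of stacks admitting a section which is an fppf epimorphism is a neutral gerbe, trivialized by that section, with band $G$; but it is cleaner to argue directly.

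First I would define $\Phi$ on objects. Given $\eta\colon T\arr\stX$, put $\xi=p(\eta)$ and let $P_\eta$ be the subsheaf of $\Isosh_T(\eta,s(\xi))$ of those isomorphisms $\phi$ with $p(\phi)=\id_\xi$, i.e. the fibre over the identity of the map $p\colon \Isosh_T(\eta,s(\xi))\arr \Autsh_T(\xi)$; this is precisely the sheaf described at the end of the statement. Post-composition makes $\xi^*G$ act on $P_\eta$, and the action is free and transitive on non-empty fibres: if $\phi,\phi'$ are local sections of $P_\eta$ then $\phi'\circ\phi^{-1}$ is an automorphism of $s(\xi)$ lying in $\xi^*G$, and it is the unique element carrying $\phi$ to $\phi'$. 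Thus $P_\eta$ is a pseudo-torsor under $\xi^*G$, and to see it is a torsor I must show it is fppf-locally non-empty; this is the only place the hypothesis is used. Since $s$ is an fppf epimorphism, after an fppf base change $T'\arr T$ there is an isomorphism $\psi\colon \eta_{T'}\arr s(\xi')$ with $\xi'\colon T'\arr\stY$; then $p(\psi)$ identifies $\xi'$ with $\xi_{T'}$, and $s(p(\psi))^{-1}\circ\psi\colon \eta_{T'}\arr s(\xi)_{T'}$ has image the identity under $p$, hence is a section of $P_\eta$ over $T'$. I set $\Phi(\eta)=(\xi,P_\eta)$, and for a morphism $u\colon \eta\arr\eta'$ in $\stX$ I take the pair consisting of $p(u)$ and the isomorphism of torsors $P_\eta\arr P_{\eta'}$, $\phi\mapsto s(p(u))\circ\phi\circ u^{-1}$, equivariant along the isomorphism $p(\eta)^*G\arr p(\eta')^*G$ induced by $p(u)$; functoriality and compatibility with base change are immediate.

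For full faithfulness, observe that for $\eta,\eta'\in\stX(T)$ both $\Hom_{\stX(T)}(\eta,\eta')$ and $\Hom_{\Bi_\stY G(T)}(\Phi\eta,\Phi\eta')$ are fppf sheaves on $T$, so the natural transformation between them may be checked to be an isomorphism after an fppf base change. Using the local non-emptiness above I reduce to the case $\eta=s(\xi)$, $\eta'=s(\xi')$ with $\xi=p(\eta)$, $\xi'=p(\eta')$. Then $P_{s(\xi)}$ is the trivial $\xi^*G$-torsor (it contains $\id$), so a morphism $\Phi s(\xi)\arr \Phi s(\xi')$ over $T$ amounts to a pair $(v,g)$ with $v\colon \xi\arr\xi'$ in $\stY(T)$ and $g\in (\xi')^*G$; on the other hand $u\mapsto (p(u),\,s(p(u))\circ u^{-1})$ is a bijection from $\Hom_{\stX(T)}(s(\xi),s(\xi'))$ onto the same set, and a direct check shows it is the map induced by $\Phi$. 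For essential surjectivity, the same computation gives $\Phi(s(\xi))\simeq (\xi,\text{trivial }\xi^*G\text{-torsor})$, and every object of $\Bi_\stY G$ over $T$ is, by definition of torsor, fppf-locally of this form; hence every object of $\Bi_\stY G$ is fppf-locally in the essential image of $\Phi$. A fully faithful morphism of stacks which is locally essentially surjective is an equivalence (glue local preimages, transporting the descent datum through the fully faithful $\Phi$ and using that $\stX$ is a stack), so $\Phi$ is an equivalence, and by construction the $G$-torsor attached to $\eta$ is the one in the statement.

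There is no serious obstacle here: the lemma is essentially formal, and the single substantive input is the local non-emptiness of $P_\eta$, which is exactly where the epimorphism hypothesis is consumed. The part needing the most care is the bookkeeping around conventions in the full-faithfulness step---the side on which $G$ acts on $P_\eta$, the precise equivariance of $P_\eta\arr P_{\eta'}$, and the identification of morphisms of trivial torsors with group elements---so that the bijection produced there is genuinely the one induced by $\Phi$ and not merely an abstract isomorphism of the two Hom-sheaves.
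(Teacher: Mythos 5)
Your proof is correct and follows essentially the same route as the paper: you build the same functor $\stX\arr \Bi_\stY G$ by sending $\eta$ to the sheaf of isomorphisms $\eta\arr s(p(\eta))$ lying over the identity, use the epimorphism hypothesis exactly where the paper does (local non-emptiness of that pseudo-torsor), and then verify the equivalence. The only difference is cosmetic: where the paper invokes ``standard results'' to see that the functor is an equivalence after base change along any $T\arr\stY$, you spell this out directly via full faithfulness plus fppf-local essential surjectivity.
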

\begin{proof}
Given $\xi\colon T\arr\stY$ denote by $\stX_\xi$, $G_\xi$ and $s_\xi\colon T\arr \stX_\xi$ the base change of $\stX$, $G$ and $s\colon \stY\arr\stX$
along $\xi$. Given $\eta\colon T\arr \stX$, we show that $F(\eta)$ is a $G$-torsor. A direct check shows that $F(\eta)$ coincides with $\Isosh_{\stX_{p(\eta)}}(\eta,s_{p(\eta)})$ and
$G_\xi\simeq \Autsh_{\stX_\xi}(s_\xi)$.
Since $\eta$ and $s_{p(\eta)}$ are fppf locally isomorphic, it follows that $F(\eta)$ is a $G_{p(\eta)}$-torsor. It is easy to see that the association
$\eta\longmapsto F(\eta)$ defines a a functor
$F\colon \stX\arr \Bi_\stY G$.
Since the base change of $F$ along any morphism $T\arr\stY$ is an equivalence by standard results,
we obtain that it is globally an equivalence.
\end{proof}

\begin{definit}
We define the group functor $G_d$ over $\stM_{1,1}$ as the group $G$ obtained as in
\ref{torsors over a stack} with respects to the maps defined in \ref{fundamental map}.
\end{definit}

Let us describe the group $G_{d}$ more concretely.

\begin{prop}\label{properties of Gd}
An element of $G_d(S\arrdi{(E,\sigma)}\stM_{1,1})$ is a pair $(f,\lambda)$ where $f\colon E\arr E$ is a translation by an element in $E[d]$ and 
$\lambda\colon \odi E(\sigma)^d\arr \odi E(f(\sigma))^d$ is an isomorphism. Moreover we have an exact sequence
  \[
  \begin{tikzpicture}[xscale=1.5,yscale=-0.6]
    \node (A0_2) at (2, 0) {$(f,\lambda)$};
    \node (A0_3) at (3, 0) {$f(\sigma)$};
    \node (A1_0) at (0, 1) {$0$};
    \node (A1_1) at (1, 1) {$\Gm$};
    \node (A1_2) at (2, 1) {$G_d$};
    \node (A1_3) at (3, 1) {$\E[d]$};
    \node (A1_4) at (4, 1) {$0$};
    \node (A2_1) at (1, 2) {$\mu$};
    \node (A2_2) at (2, 2) {$(\id,\mu)$};
    \path (A2_1) edge [|->,gray]node [auto] {$\scriptstyle{}$} (A2_2);
    \path (A1_0) edge [->]node [auto] {$\scriptstyle{}$} (A1_1);
    \path (A1_1) edge [->]node [auto] {$\scriptstyle{}$} (A1_2);
    \path (A1_2) edge [->]node [auto] {$\scriptstyle{}$} (A1_3);
    \path (A0_2) edge [|->,gray]node [auto] {$\scriptstyle{}$} (A0_3);
    \path (A1_3) edge [->]node [auto] {$\scriptstyle{}$} (A1_4);
  \end{tikzpicture}
  \]
in the Zariski topology of $\Sch/\stM_{1,1}$, where $\E\arr \stM_{1,1}$ is the universal curve. 
In particular $G_d$ is affine and of finite type over $\stM_{1,1}$ and, if $p\nmid d$, it is smooth over $\stM_{1,1}$.
\end{prop}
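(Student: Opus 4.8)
The plan is to unwind the abstract definition of $G_d$ coming from \ref{torsors over a stack}, applied to the pair of functors in \ref{fundamental map}, and to make it completely explicit. Fix a scheme $S$ and an object $\xi=(E,\sigma)$ of $\stM_{1,1}(S)$; then $s(\xi)=(E,\odi E(d\sigma))$ in $\jac_{d,1}(S)$ and, since $s$ is a section of $p$, the isomorphism $E\simeq\Picsh^0_{E/S}$ of \ref{elliptic curves and pic} identifies $ps(\xi)$ with $\xi$, so that $G_d(\xi)=\Ker\bigl(\Aut_{\jac_{d,1}}(s(\xi))\arr\Aut_{\stM_{1,1}}(\xi)\bigr)$. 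An element of $\Aut_{\jac_{d,1}}(s(\xi))$ is a pair $(g,\varphi)$ consisting of an automorphism $g$ of $E$ over $S$ and an isomorphism $\varphi$ between $\odi E(\sigma)^d$ and its pullback $g^{*}(\odi E(\sigma)^d)\simeq\odi E(g(\sigma))^d$, and the functor $p$ sends $(g,\varphi)$ to the automorphism $g_{*}$ that $g$ induces on $\Picsh^0_{E/S}$ by functoriality.

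First I would identify this kernel. Write $g=f\circ h$ with $f$ the translation by $g(\sigma)$ and $h=f^{-1}\circ g$ the automorphism of $E$ fixing $\sigma$. Translations act trivially on $\Picsh^0_{E/S}$ --- a standard property of abelian schemes, which follows from the theorem of the square --- so $g_{*}=h_{*}$ on $\Picsh^0_{E/S}$; on the other hand, under the isomorphism $E\simeq\Picsh^0_{E/S}$ of \ref{elliptic curves and pic} the automorphism $h_{*}$ corresponds to $h$ itself, and hence $g_{*}=\id$ if and only if $h=\id$, i.e. if and only if $g$ is a translation. Thus $G_d(\xi)$ consists exactly of the pairs $(f,\lambda)$ as in the statement, with $f$ a translation of $E$ and $\lambda$ an isomorphism $\odi E(\sigma)^d\arr\odi E(f(\sigma))^d$; such a $\lambda$ exists precisely when $\odi E(f(\sigma)-\sigma)^{\otimes d}\simeq\odi E$, which under $E\simeq\Picsh^0_{E/S}$ is the condition $f(\sigma)\in\E[d]$, that is, $f$ is the translation by an element of $\E[d]$.

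For the exact sequence, the map $G_d\arr\E[d]$ is $(f,\lambda)\mapsto f(\sigma)$ and the map $\Gm\arr G_d$ is $\mu\mapsto(\id,\mu)$, with $\mu\in\Gm(S)\simeq\Aut(\odi E(d\sigma))$; a direct check shows both are group homomorphisms, the second one is injective, and $(f,\lambda)$ lies in the kernel of the first one if and only if $f=\id$, in which case $\lambda$ is precisely such a scalar, so the image of $\Gm$ equals the kernel of $G_d\arr\E[d]$. For surjectivity I would argue Zariski-locally: given $a\in\E[d](S)$, the line bundle $\odi E(a-\sigma)^{\otimes d}$ has trivial class in $\Picsh^0_{E/S}$ by the previous paragraph, hence becomes trivial after a Zariski localization of $S$, over which $(t_a,\lambda)$ provides a lift of $a$. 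In particular $G_d\arr\E[d]$ is a $\Gm$-torsor.

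The remaining assertions are then formal. The subgroup scheme $\E[d]\subseteq\E$ is isomorphic to $(\Picsh^0_{\E/\stM_{1,1}})[d]$, which is finite over $\stM_{1,1}$ by \ref{multiplication by two on jacobians} and in particular affine over it; since $G_d\arr\E[d]$ is a $\Gm$-torsor it is affine and of finite type, hence so is $G_d\arr\stM_{1,1}$. If $p\nmid d$ then $\E[d]\arr\stM_{1,1}$ is moreover \'etale by \ref{multiplication by two on jacobians}, while $G_d\arr\E[d]$ is smooth, being a torsor under the smooth group $\Gm$, so $G_d\arr\stM_{1,1}$ is smooth. The main point of the proof is the identification of the kernel in the second paragraph --- that translations act trivially on $\Picsh^0$ while automorphisms fixing the origin act faithfully on it; granted this, everything else is routine bookkeeping.
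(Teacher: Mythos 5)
Your proof is correct and takes essentially the same route as the paper's: you unwind \ref{torsors over a stack} applied to \ref{fundamental map}, use \ref{elliptic curves and pic} to see that $f_*=\id$ forces $f$ to be a translation with $f(\sigma)\in\E[d]$, get exactness in the first two terms from $\pi_*\mathbb{G}_{m,E}\simeq\mathbb{G}_{m,S}$, prove Zariski-local surjectivity onto $\E[d]$ by trivializing the degree-zero sheaf Zariski-locally on the base (the content of \ref{cbs for elliptic curves}), and deduce the affineness, finite type and smoothness claims from the resulting local product structure $\Gm\times\E[d]$ together with \ref{multiplication by two on jacobians}. The only cosmetic differences are your factorization $g=f\circ h$ through an origin-fixing automorphism (the paper instead computes $f_*$ on divisor classes directly) and the harmless pullback/pushforward convention in identifying $g^*(\odi E(\sigma)^d)$, so there is nothing of substance to change.
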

\begin{proof}
 By definition, an element of $G_d(S\arrdi{(E,\sigma)}\stM_{1,1})$ is a pair $(f,\lambda)$ where $f\colon E\arr E$ is an isomorphism
such that $f_*\colon \Picsh^0_{E/S}\arr \Picsh^0_{E/S}$ is the identity and 
$\lambda\colon \odi E(\sigma)^d\arr \odi E(f(\sigma))^d$ is an isomorphism. Taking into account \ref{elliptic curves and pic}, the condition 
$f_*=\id$ means that, for all $\delta\in \E(T)$ and $S$-scheme $T$ we have
$$
  [\odi{E_T}(f(\delta)-f(\sigma))]=[\odi{E_T}(\delta-\sigma)]
$$
which implies that $f\colon E\arr E$ is a translation. The existence of $\lambda$ also implies that $f(\sigma)\in E[d]$.
In particular the sequence in the statement is well defined and, since $\pi_*\mathbb{G}_{m,E}\simeq \mathbb{G}_{m,S}$, it is exact in the first two terms.

It remains to prove that $G_d\arr \E[d]$ is a Zariski epimorphism. This will also imply that $G_d$ is locally a product
of $\Gm$ and $\E[d]$, proving the last sentence in the statement. Let $\delta \in \E[d]$ and $t$ be the translation by $\delta$, so that $t(\sigma)=\delta$.
By \ref{cbs for elliptic curves}, the equality $[\odi E(\delta-\sigma)^d]=0$
in $\Picsh^0_{E/S}$ implies that $\odi E(\delta-\sigma)^d\simeq \pi^*\shQ$, where
$\shQ$ is an invertible sheaf over $S$. In particular, where $\shQ$ is trivial, 
we get an isomorphism $\lambda\colon \odi E(\sigma-t(\sigma))^d \arr \odi E$ and therefore a
pair $(t,\lambda)\in G_d$ over $\delta$.
\end{proof}
The groups obtained from $G_d$ as base change along geometric points $\Spec k\arrdi{(E,p)}\stM_{1,1}$ are particular cases of Theta groups,
first defined by Mumford in his paper \cite{Mumford1966}. With notation from this paper we have $G_d\times_{\stM_{1,1}} k = \shG(\odi E(dp))$.

\begin{cor}\label{structure of Xd}
We have an isomorphism $\jac_{d,1} \simeq \Bi_{\stM_{1,1}} G_d$.
\end{cor}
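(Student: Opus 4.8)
The plan is to read the statement off directly from Lemma \ref{torsors over a stack}, with essentially no additional work. Set $\stX = \jac_{d,1}$ and $\stY = \stM_{1,1}$, let $p\colon \stX \arr \stY$ be the functor $(E,\shL)\mapsto (\Picsh^0_{E/S},[\odi E])$, and let $s\colon \stY \arr \stX$ be the functor $(E,\sigma)\mapsto (E,\odi E(d\sigma))$. By Proposition \ref{fundamental map} the map $s$ is a section of $p$, and it is an fppf epimorphism; these are exactly the hypotheses of Lemma \ref{torsors over a stack}. That lemma then produces an equivalence $\stX \simeq \Bi_\stY G$, where $G$ is the sheaf of groups over $\stY$ defined by $G(T\arrdi{\xi}\stY) = \Ker\bigl(\Aut_\stX(s(\xi))\arr \Aut_\stY(p s(\xi))\bigr)$. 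Since this is precisely the definition of $G_d$ (the group functor introduced right after Lemma \ref{torsors over a stack}), we conclude $\jac_{d,1}\simeq \Bi_{\stM_{1,1}} G_d$.

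There is no genuine obstacle remaining at this stage: all the substance has already been packaged into earlier statements. The nontrivial input is the epimorphism/section claim of Proposition \ref{fundamental map}, which in turn rests on Remark \ref{multiplication by two on jacobians} and Lemma \ref{elliptic curves and pic} (the fact that a degree $d$ invertible sheaf on a genus one curve is fppf-locally of the form $\odi E(d\sigma)$ for a section $\sigma$), together with the explicit description of $G_d$ in Proposition \ref{properties of Gd}. The only thing worth making explicit in the write-up is the bookkeeping that identifies the abstract group $G$ output by Lemma \ref{torsors over a stack} for these particular $p$ and $s$ with the $G_d$ of Proposition \ref{properties of Gd}, and this holds by construction rather than by any computation. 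Hence the proof of the corollary is a one-line citation of Lemmas \ref{torsors over a stack} and Proposition \ref{fundamental map}.
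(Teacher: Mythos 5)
Your proposal is correct and is exactly the paper's (implicit) argument: the corollary is stated without proof precisely because $G_d$ is \emph{defined} as the group produced by Lemma \ref{torsors over a stack} applied to the section and projection of Proposition \ref{fundamental map}, so the equivalence $\jac_{d,1}\simeq \Bi_{\stM_{1,1}}G_d$ is immediate from that lemma. Your identification of the hypotheses (section, fppf epimorphism) and of the resulting group with $G_d$ matches the paper's construction word for word.
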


By \ref{picard of BG} the last step in the computation of the Picard group of $\jac_{d,1}$ is the study of the group of characters $\Hom(G_d,\Gm)$.
For the remaining part of this subsection we assume $p\nmid d$.

\begin{lemma}\label{torsion groups are autodual}
 The map
  \[
  \begin{tikzpicture}[xscale=3.0,yscale=-0.6]
    \node (A0_0) at (0, 0) {$G_d\times G_d$};
    \node (A0_1) at (1, 0) {$\Gm$};
    \node (A1_0) at (0, 1) {$(x,y)$};
    \node (A1_1) at (1, 1) {$xyx^{-1}y^{-1}$};
    \path (A0_0) edge [->]node [auto] {$\scriptstyle{\hat e_d}$} (A0_1);
    \path (A1_0) edge [|->,gray]node [auto] {$\scriptstyle{}$} (A1_1);
  \end{tikzpicture}
  \]
is bilinear and factors through a map $e_d\colon \E[d]\times \E[d]\arr \Gm$, where $\E\arr \stM_{1,1}$ is the universal curve.
Moreover the map $e_d$ induces an isomorphism
$$
  \E[d]\simeq \Homsh(\E[d],\Gm)
$$
\end{lemma}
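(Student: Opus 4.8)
The plan is to treat the three claims (bilinearity, factorization, perfectness) in turn, reducing each to a computation that can ultimately be checked on geometric fibers. First I would address bilinearity of $\hat e_d$. Since $G_d$ is a central extension of $\E[d]$ by $\Gm$ (Proposition \ref{properties of Gd}), the commutator pairing $\hat e_d(x,y)=xyx^{-1}y^{-1}$ lands in the center $\Gm$, and a standard group-theoretic identity for commutators in a group with central kernel shows that $\hat e_d$ is bimultiplicative: for fixed $y$, the map $x\mapsto xyx^{-1}y^{-1}$ is a homomorphism because $(xx')y(xx')^{-1}y^{-1} = x(x'yx'^{-1}y^{-1})x^{-1}\cdot(xyx^{-1}y^{-1})$ and the first factor, being central, commutes with $x$. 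Symmetrically in $y$. This is the same mechanism that makes the Weil pairing on a Theta group alternating, and indeed $\hat e_d$ is the Weil pairing in the guise of Mumford's commutator pairing on $\shG(\odi E(dp))$.

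Next, factorization through $\E[d]\times\E[d]$: because $\Gm$ is central in $G_d$, any element of the form $(\id,\mu)$ commutes with everything, so $\hat e_d$ kills $\Gm\times G_d$ and $G_d\times\Gm$; hence it descends to $e_d\colon \E[d]\times\E[d]\arr\Gm$ by the exact sequence of Proposition \ref{properties of Gd} (one checks the descent fppf-locally, where $G_d\simeq\Gm\times\E[d]$ as a scheme). The resulting $e_d$ is then bilinear and alternating (it vanishes on the diagonal since $\hat e_d(x,x)=1$), and stable under base change because the whole construction of $G_d$ is.

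Finally, the perfectness of $e_d$, i.e.\ that the induced map $\E[d]\arr\Homsh(\E[d],\Gm)$ is an isomorphism: both sides are finite flat group schemes over $\stM_{1,1}$ of the same order $d^2$ (for $\E[d]$ this is Remark \ref{multiplication by two on jacobians} with $g=1$; for the Cartier-type dual $\Homsh(\E[d],\Gm)$ one argues it is also finite flat of order $d^2$ since $\E[d]$ is, using $p\nmid d$ so that $\E[d]$ is étale), so it suffices to check the map is an isomorphism fppf-locally, and since it is a map of finite flat group schemes of equal order, it suffices to check it is a monomorphism, or equivalently an isomorphism on geometric fibers over $\stM_{1,1}$. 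On a geometric fiber $\Spec k\arrdi{(E,p)}\stM_{1,1}$ this reduces to the classical statement that the Weil pairing $E[d]\times E[d]\arr\mu_d$ on an elliptic curve over an algebraically closed field of characteristic prime to $d$ is nondegenerate, for which one can cite Mumford's \cite{Mumford1966} (the pairing $e_d$ on a Theta group) or the standard theory of the Weil pairing.

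The main obstacle I expect is the bookkeeping needed to identify $\hat e_d$ with Mumford's commutator pairing on the Theta group $\shG(\odi E(dp))$ precisely enough to invoke the classical nondegeneracy result, and to make sure all the flatness/order statements for $\Homsh(\E[d],\Gm)$ over the stack $\stM_{1,1}$ (rather than over a field) are correctly justified via base change to geometric points. The bilinearity and factorization steps are formal and should be short.
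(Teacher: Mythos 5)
Your proposal is correct and follows essentially the same route as the paper: bilinearity and the factorization through $\E[d]\times\E[d]$ are deduced formally from the central extension $1\arr\Gm\arr G_d\arr\E[d]\arr 0$, and the perfectness of $e_d$ is reduced, as a map of finite flat group schemes, to geometric points of $\stM_{1,1}$, where it is the nondegeneracy of Mumford's commutator pairing on the Theta group $\shG(\odi E(dp))$ from \cite{Mumford1966}. The identification with Mumford's Theta group that you flag as a possible obstacle is already established in the paper right after Proposition \ref{properties of Gd}, so no extra bookkeeping is needed.
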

\begin{proof}
 The map $\hat e_d$ is well defined because $\E[d]$ is abelian. Note that $\Gm\subseteq G_d$ is contained in the center. In particular the map $e_d$ in the 
statement is well defined and we have
\begin{align*}
 \hat e_d(xy,z)=xyzy^{-1}x^{-1}z^{-1}=x\hat e_d(y,z)zx^{-1}z^{-1}=\hat e_d(x,z)\hat e_d(y,z)
\end{align*}
Finally $\hat e_d(x,y)=\hat e_d(y,x)^{-1}$ and $\hat e_d$ and $e_d$  are therefore bilinear. Let $\phi$ be the induced map $\E[d]\arr \Homsh(\E[d],\Gm)$. This is
a map between flat and finite group schemes and we can check that it is an isomorphism on the geometric points. So let $E$ be an elliptic curve over an algebraically closed
field. In this case $E[d]\simeq \Z/d\Z\times\Z/d\Z$ and $\Homsh(E[d],\Gm)\simeq E[d]$ as abstract groups. 
The result then follows from the fact that $e_d$ is non-degenerate thanks to \cite[§1, Theorem 1]{Mumford1966}.
\end{proof}

\begin{prop}\label{torsion elliptic has no sections}
Let $\E\arr \stM_{1,1}$ be the universal curve. Then 
$$
  \duale{\E[d]} = \Hom(\E[d],\Gm)=0
$$
In particular the map $\duale G_d\arr \duale{\mathbb{G}}_m$ induced by the inclusion $\Gm\arr G_d$ is injective.
\end{prop}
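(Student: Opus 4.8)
The plan is to use the autoduality of $\E[d]$ proved in Lemma~\ref{torsion groups are autodual} to reduce the statement to the fact that the universal elliptic curve over $\stM_{1,1}$ has no nontrivial torsion section, and then to rule out such sections via automorphisms of well chosen members of the family.

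By Lemma~\ref{torsion groups are autodual} the pairing $e_d$ gives an isomorphism of sheaves $\Homsh(\E[d],\Gm)\simeq\E[d]$ over $\stM_{1,1}$; passing to sections over $\stM_{1,1}$ we get $\Hom(\E[d],\Gm)\simeq\E[d](\stM_{1,1})$, so it is enough to show that the only $d$-torsion section of $\pi\colon\E\arr\stM_{1,1}$ is the zero section. Let $P\in\E[d](\stM_{1,1})$. For every object $(E,\sigma)$ of $\stM_{1,1}$ over a scheme $S$, the inversion $[-1]\colon E\arr E$ is an automorphism of $(E,\sigma)$ in $\stM_{1,1}$, so functoriality of $P$ gives $[-1]\circ P=P$, that is $2P=0$. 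Together with $dP=0$ this already forces $P=0$ when $d$ is odd.

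Assume now $d$ even; then $p\neq 2$ because $p\nmid d$, so by \ref{multiplication by two on jacobians} and the identification $\E\simeq\Picsh^0_{\E/\stM_{1,1}}$ of \ref{elliptic curves and pic} the group scheme $\E[2]\arr\stM_{1,1}$ is finite and \'etale, and $P$, being killed by $2$, is a section of it. The locus in $\stM_{1,1}$ where $P$ agrees with the zero section is the pullback of the diagonal of $\E[2]\arr\stM_{1,1}$, hence open (the map is unramified) and closed (the map is separated); as $\stM_{1,1}$ is connected, after base changing to an algebraically closed field $k$ it suffices to produce a single $k$-point of $\stM_{1,1}$ at which $P$ vanishes. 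Take the elliptic curve $E_0$ over $k$ carrying an order $3$ automorphism $\alpha$ fixing the origin and permuting the three nonzero $2$-torsion points cyclically: one may take $E_0\colon y^2=x^3-x$ with $\alpha(x,y)=(x+1,y)$ if $\car k=3$, and $E_0\colon y^2=x^3+1$ with $\alpha(x,y)=(\zeta x,y)$, $\zeta$ a primitive cube root of unity, if $\car k\neq 2,3$. Functoriality gives $\alpha(P_{E_0})=P_{E_0}$, and since $\alpha$ fixes no nonzero $2$-torsion point of $E_0$ we get $P_{E_0}=0$; hence $P=0$, which proves $\duale{\E[d]}=\Hom(\E[d],\Gm)=0$.

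For the last assertion, the exact sequence $0\arr\Gm\arr G_d\arr\E[d]\arr 0$ of \ref{properties of Gd} shows that a character of $G_d$ trivial on $\Gm$ factors through a character of $\E[d]$, hence is trivial by what we just proved; thus the restriction map $\duale{G_d}\arr\duale{\Gm}$ is injective. The delicate point is the even case: inversion only yields $2P=0$, so one really needs a curve in the family with extra automorphisms — equivalently, the fact that the geometric monodromy on $\E[2]$ is the full symmetric group on the nonzero $2$-torsion — and the two explicit curves above furnish this in all characteristics $\neq 2$.
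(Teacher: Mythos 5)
Your proof is correct, and after the common first step (using Lemma~\ref{torsion groups are autodual} to reduce to showing that the universal curve has no nonzero $d$-torsion section) it proceeds by a genuinely different route than the paper. The paper reduces to $d$ prime and then argues separately: for $d=2$ it evaluates the section on an elliptic curve $y^2=g(x)$ over a suitably chosen \emph{non-closed} field with $g$ an irreducible separable cubic, so that $E[2](k)=0$; for odd primes it builds the quadratic twist $E'=F_{P/k,i}$ of $E$ along a separable quadratic extension and uses compatibility of the section with the isomorphism $E'_L\simeq E_L$ to conclude that $\delta_E$ is inversion-invariant, hence lies in $E[2](k)\cap E[d](k)=0$. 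You obtain the inversion-invariance for free and for all $d$ at once, by observing that $[-1]$ is an automorphism of every object $(E,\sigma)$ of $\stM_{1,1}$, so any section of $\E[d]$ over $\stM_{1,1}$ is killed by $2$; this is the same functoriality principle the paper invokes for its twist (``$\delta_E$ and $\delta_{E'}$ come from a global section''), just applied to an automorphism rather than to a twisted form, and it immediately settles odd $d$. For even $d$ (where $p\neq 2$ by the standing hypothesis $p\nmid d$) you then kill the residual $2$-torsion section by evaluating at a single geometric point carrying an order-$3$ automorphism that permutes the three nonzero $2$-torsion points (the $j=0$ curve, resp.\ $y^2=x^3-x$ in characteristic $3$), together with the observation that the vanishing locus of the section is open and closed in the connected stack $\stM_{1,1}$ because $\E[2]\arr\stM_{1,1}$ is finite \'etale by \ref{multiplication by two on jacobians} and \ref{elliptic curves and pic}; the paper's remark that ``$\E[d]$ is \'etale and separated'' plays the same role in the opposite direction. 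What your approach buys is the elimination of the auxiliary non-closed fields and of the twisting construction, at the price of relying on special curves with extra automorphisms (only the existence of an automorphism fixing no nonzero $2$-torsion point is needed, not the full symmetric monodromy you mention as an aside). Your final deduction of the injectivity of $\duale G_d\arr\duale{\mathbb G}_m$ from the exact sequence of \ref{properties of Gd} is the intended one.
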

\begin{proof}
By \ref{torsion groups are autodual} we have $\Homsh(\E[d],\Gm)\simeq \E[d]$. 
Therefore we have to prove that there are no sections $\stM_{1,1}\arr \E[d]$ but the zero one.
By contradiction, assume we have a non zero section $\delta\colon \stM_{1,1}\arr \E[d]$. In particular, by base change, we have
a section $\delta_E$ for all elliptic curves $E$. Since $\E[d]$ is \'etale and separated over $\stM_{1,1}$, there are no
elliptic curves $E$ such that $\delta_E$ is zero. Moreover we can assume that $d$ is prime.
If $d=2$, let $k$ be a field having an irreducible and separable polynomial $g\in k[x]$ of degree $3$ and consider the elliptic curve
defined by the equation $y^2=g(x)$. In this case $E[2](k)=0$ by \cite[Group law algorithm 2.3]{Silverman1986} because $g$ has no zeros in $k$
and therefore $\delta_E =0$, which is a contradiction.

So we can assume that $d$ is an odd prime. Let $k$ be a field having a degree $2$ separable extension $L$ and an elliptic curve $E$. We are going to prove that 
$\delta_E\in E(k)$ is invariant under the involution of $E$, that is that $\delta_E \in E[2](k)$.
This will end the proof because $E[d](k)\cap E[2](k)=0$.

We want to construct a new elliptic curve $E'$ over $k$ with the following construction. Let $F$ be a sheaf of sets over a scheme $S$ with an involution
$i\colon F\arr F$, let $P\arr S$ be a $\Z/2\Z$-torsor and call $\sigma\colon P\arr P$ the induced order $2$ automorphism.
We define a new functor by
\begin{displaymath}
 F_{P/S,i}\colon (\Sch/S)^{\text{op}} \arr \sets\comma  F_{P/S,i}(T)=\{x\in F(T\times P) \st \sigma^*x=i(x)\}
\end{displaymath}
Since $i$ and $\sigma^*$ commute, it is easy to check that $F_{P/S,i}$ is a sheaf and that if $S'\arr S$ is any base change then
$F_{P/S,i}\times S' = F_{P\times S'/S',i}$. Moreover, if $\Delta\colon T\arr P\times T$ is a section it is easy to check that
\[
F_{P/S,i}(T)\arr F(T)\comma z\longmapsto \Delta^*z
\]
is a bijection.
In particular for any torsor $P$ we get an isomorphism $\tau\colon F_{P/S,i}\times P\arr F\times P$ induced by the diagonal section $P\arr P\times P$.
We now claim that if there exist $x\in F_{P/S,i}(S)$ and $y\in F(S)$ such that $\tau(h^*x)=h^*y$, where $h\colon P\arr S$ is the structure
morphism, then it follows that $i(y)=y$. Indeed by construction $\tau(h^*x)=x$, $\sigma^*h^*y=h^*y$ and, since $x\in F_{P/S,i}(S)$, $\sigma^*x=i(x)$.
In particular $h^*i(y)=h^*y$ and by descent $i(y)=y$.

We apply the previous construction with $F=E$, $i$ the involution of $E$ and $P=\Spec L$. Set $E'=F_{P/k,i}$. The sheaf $E'$ is a genus one curve and
$0\in E'(k)\subseteq E(L)$. Thus $E'$ is an elliptic curve and, by construction, $\tau\colon E'\times P\arr E\times P$ preserves the neutral element,
that is it is an isomorphism of elliptic curves. Since $\delta_E$ and $\delta_{E'}$ come from a global section on $\stM_{1,1}$ we should have
$\tau(\delta_{E'\times P})=\delta_{E\times P}$, which implies that $i(\delta_E)=\delta_E$ as shown above.
\end{proof}

\begin{prop}\label{computation of det pi star L to the n}
Consider the map $\duale G_d \arr \duale{ \mathbb{G}}_m = \Z$ induced by the inclusion $\Gm\arr G_d$. The morphism
$$
  \Pic \jac_{d,1}\simeq \Pic \stM_{1,1} \oplus \duale G_d\arr \Pic \stM_{1,1} \oplus \Z \simeq \Z/12\Z\oplus \Z
$$
 sends the invertible sheaf on $\jac_{d,1}$
defined by $\jac_{d,1}\ni (E\arrdi \pi S,\shL)\longmapsto \det \pi_* \shL^n$ to the element $(1-nd(nd+1)/2,n^2d)$
\end{prop}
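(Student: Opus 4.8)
The plan is to use the presentation $\jac_{d,1}\simeq \Bi_{\stM_{1,1}}G_d$ of \ref{structure of Xd} together with the splitting $\Pic\jac_{d,1}\simeq\Pic\stM_{1,1}\oplus\duale G_d$ of \ref{picard of BG}. Under this splitting the two projections can be described concretely: the first is restriction along the section $s\colon\stM_{1,1}\arr\jac_{d,1}$, $(E,\sigma)\mapsto(E,\odi E(d\sigma))$, of \ref{fundamental map} (this is a genuine retraction since $ps=\id$ there), and the second reads off the character of $G_d$ by which it acts on this restriction, using that $s$ corresponds precisely to the trivial $G_d$-torsor under \ref{torsors over a stack}. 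Write $\shN$ for the invertible sheaf $(E\arrdi\pi S,\shL)\mapsto\det\pi_*\shL^n$ on $\jac_{d,1}$; this is well defined because $\shL^n$ has degree $nd>0$, so $\pi_*\shL^n$ is locally free of rank $nd$ and commutes with base change by \ref{cbs for elliptic curves}. Thus the image of $[\shN]$ in $\Pic\stM_{1,1}\oplus\Z$ is the pair whose first entry is $[s^*\shN]$ and whose second entry is the integer $m$ characterized by the requirement that $(\id_E,\mu)\in\Gm\subseteq G_d$ act on $s^*\shN$ as multiplication by $\mu^m$.

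For the first entry one computes $s^*\shN\simeq\det\pi_*\odi\E(nd\sigma)$ on $\stM_{1,1}$, where $\pi\colon\E\arr\stM_{1,1}$ is the universal curve with canonical section $\sigma$. By \ref{computation of line bundles on the universal elliptic curve} (applied with $nd\geq1$ in place of $n$) this is $\sigma^*\odi\E(\sigma)^{nd(nd+1)/2-1}$, and since $\sigma^*\odi\E(\sigma)\simeq\duale{(\pi_*\omega_\pi)}$ it equals $(\pi_*\omega_\pi)^{1-nd(nd+1)/2}$. By Mumford's Theorem \ref{Mumford's theorem} we have $\Pic\stM_{1,1}\simeq\Z/12\Z$ generated by $\pi_*\omega_\pi$, so the first entry is $1-nd(nd+1)/2$.

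For the second entry one unwinds the action of $\Gm\subseteq G_d$ on $s^*\shN$. By \ref{properties of Gd} the element $\mu\in\Gm$ is the automorphism $(\id_E,\mu)$ of $(E,\odi E(d\sigma))$ acting as multiplication by $\mu$ on $\odi E(d\sigma)$; hence it acts as multiplication by $\mu^n$ on $\odi E(d\sigma)^{\otimes n}=\odi E(nd\sigma)$, as multiplication by $\mu^n$ on the rank $nd$ locally free sheaf $\pi_*\odi E(nd\sigma)$, and therefore as multiplication by $(\mu^n)^{nd}=\mu^{n^2d}$ on $\det\pi_*\odi E(nd\sigma)\simeq s^*\shN$. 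So $m=n^2d$, and $[\shN]\mapsto(1-nd(nd+1)/2,\,n^2d)$ as asserted.

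The only delicate point is the bookkeeping of the first paragraph: one must check carefully that, under the isomorphism $\Pic\jac_{d,1}\simeq\Pic\stM_{1,1}\oplus\duale G_d$ of \ref{picard of BG}, the two summands are recovered exactly by ``restrict along $s$'' and ``take the associated character'' — this follows from \ref{picard of BG} and the identity $ps=\id$ — and that, tracing through \ref{torsors over a stack} and \ref{structure of Xd}, the section $s$ really is the trivial $G_d$-torsor, so that the action of $\Gm\subseteq G_d$ on $s^*\shN$ is the explicit one of \ref{properties of Gd}. Once these identifications are in place both coordinates drop out of \ref{computation of line bundles on the universal elliptic curve} and \ref{Mumford's theorem} with no further work; there is no genuinely hard estimate or construction involved.
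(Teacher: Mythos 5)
Your proposal is correct and is in substance the paper's own argument: the paper computes the same two coordinates by pulling the sheaf back along the explicit functor $\Bi_{\stM_{1,1}}\Gm\arr\jac_{d,1}$, $(E,\sigma,\shQ)\mapsto(E,\odi E(d\sigma)\otimes\pi^*\shQ)$ (identified with the map induced by $\Gm\subseteq G_d$), where the factor $\shQ^{n^2d}$ in $\det\pi_*(\odi E(nd\sigma)\otimes\pi^*\shQ^n)\simeq\det\pi_*\odi E(nd\sigma)\otimes\shQ^{n^2d}$ is exactly your fiberwise weight computation and the remaining factor is handled via \ref{computation of line bundles on the universal elliptic curve} and \ref{Mumford's theorem}, just as you do. The only packaging difference is that you restrict along the section $s$ of \ref{fundamental map} (the trivial $G_d$-torsor) and read the $\Gm$-weight off the automorphism action on fibers, whereas the paper instead verifies that the functor above is naturally equivalent to $\beta_*\colon\Bi_{\stM_{1,1}}\Gm\arr\Bi_{\stM_{1,1}}G_d$; both identifications rest on \ref{torsors over a stack} and \ref{picard of BG} and carry the same content.
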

\begin{proof}
 Notice that $\Bi_{\stM_{1,1}} \Gm$ is isomorphic to the stack $\stX$ of triples $(E\arrdi \pi S,\sigma,\shQ)$
 where $(E,\sigma)\in \stM_{1,1}$ and $\shQ$ is an invertible sheaf over $S$.
 Consider the functor
  \[
  \begin{tikzpicture}[xscale=5.0,yscale=-0.6]
    \node (A0_0) at (0, 0) {$\stX$};
    \node (A0_1) at (1, 0) {$\jac_{d,1}$};
    \node (A1_0) at (0, 1) {$(E\arrdi \pi S,\sigma,\shQ)$};
    \node (A1_1) at (1, 1) {$(E \arrdi \pi S,\odi E(d\sigma)\otimes \pi^* \shQ)$};
    \path (A0_0) edge [->]node [auto] {$\scriptstyle{\Omega}$} (A0_1);
    \path (A1_0) edge [|->,gray]node [auto] {$\scriptstyle{}$} (A1_1);
  \end{tikzpicture}
  \]
  We claim that the functor $\widetilde \Omega\colon \Bi_{\stM_{1,1}}\Gm\simeq \stX\arr \jac_{d,1}\simeq \Bi_{\stM_{1,1}}G_d$ is
  naturally equivalent to the map $\beta_*\colon \Bi_{\stM_{1,1}}\Gm\arr \Bi_{\stM_{1,1}}G_d$ induced by the inclusion
  $\Gm\arr G_d$.
  Given $(E\arrdi \pi S,\sigma,\shQ)\in \stX$, the associated $\Gm$-torsor is 
  $\Isosh_S(\shQ,\odi S)$, while $\widetilde \Omega(\Isosh_S(\shQ,\odi S))$ is the subsheaf of
  $$\Isosh_{\jac_{d,1}}((E,\odi E(d\sigma)\otimes\pi^*\shQ),(E,\odi E(d\sigma)))$$ 
  of isomorphism $(f,\lambda)$ such that 
  $f_*\colon \Pic^0_E\arr \Pic^0_E$ is the identity by \ref{torsors over a stack}.
  In particular there is a $\Gm$-equivariant map $\Isosh_S(\shQ,\odi S)\arr \widetilde \Omega(\Isosh_S(\shQ,\odi S))$
  compatible with base changes and automorphisms of $\shQ$.
  Recall that $\beta_*(P)=(P\times G_d)/\Gm$ for a $\Gm$-torsor $P$ and that, if $Q$ is a $G_d$-torsor,
  $\Gm$-equivariant maps $P\arr Q$ are in one to one corresponce with $G_d$-equivariant isomorphism
  $\beta_*(P)\arr Q$. Thus given $P$, the $\Gm$-equivariant map $P\arr \widetilde \Omega(P)$ induces
  an isomorphism $\beta_*(P)\arr \widetilde \Omega(P)$ of $G_d$-torsors and it is easy to check that
  this yields a natural isomorphism $\beta_*\arr \widetilde \Omega$.
  
  We can conclude that the map in the statement can be seen as the pull-back along $\Omega\colon \stX\arr\jac_{d,1}$ on the Picard groups. The invertible sheaf
defined in the statement is sent to
$$
  (E\arrdi \pi S,\sigma, \shQ)\longmapsto \det \pi_*(\odi E(nd\sigma) \otimes \pi^* \shQ^n) \simeq (\det \pi_* \odi E(nd\sigma)) \otimes \shQ^{n^2d}
$$
Since the invertible sheaf $(E,\sigma,\shQ)\longmapsto \shQ$ is the generator of $\duale{\mathbb G}_m$ in $\Pic \Bi_{\stM_{1,1}} \Gm$, the result follows
from \ref{computation of line bundles on the universal elliptic curve}.
\end{proof}

\begin{prop}\label{image of dual Gd is dZ}
 The image of the map $\duale G_d\arr \duale{\mathbb{G}}_m \simeq \Z$ induced by the inclusion $\Gm\arr G_d$ is $d\Z$.
\end{prop}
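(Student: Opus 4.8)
The plan is to identify $\duale G_d$ with its image in $\duale{\mathbb{G}}_m\simeq\Z$ — the restriction map $\duale G_d\arr\duale{\mathbb{G}}_m$ is injective by \ref{torsion elliptic has no sections} — and then prove the two inclusions $d\Z\subseteq\duale G_d$ and $\duale G_d\subseteq d\Z$ separately.

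For $d\Z\subseteq\duale G_d$ it is enough to observe that $d$ itself lies in the image, and this is immediate from \ref{computation of det pi star L to the n} taken with $n=1$: the invertible sheaf on $\jac_{d,1}$ given by $(E\arrdi\pi S,\shL)\longmapsto\det\pi_*\shL$ is sent to $(1-d(d+1)/2,\,d)$ in $\Pic\stM_{1,1}\oplus\Z\simeq\Z/12\Z\oplus\Z$, so its $\duale G_d$-component maps to $d$. Hence the image of $\duale G_d\arr\Z$ is a nonzero subgroup containing $d$, i.e.\ it equals $m\Z$ with $m\mid d$.

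For the reverse inclusion I would use the commutator pairing of the theta group. Let $\chi\colon G_d\arr\Gm$ be a character and let $m\in\Z$ be the integer with $\chi|_{\Gm}\colon t\mapsto t^m$; we must show $d\mid m$. Being a homomorphism into an abelian group, $\chi$ kills every commutator $xyx^{-1}y^{-1}=\hat e_d(x,y)$, and by \ref{torsion groups are autodual} these lie in the central $\Gm\subseteq G_d$ and factor through the pairing $e_d\colon\E[d]\times\E[d]\arr\Gm$. Since $\E[d]$ is annihilated by $d$ and $e_d$ is bilinear, $e_d$ takes values in $\mu_d$, and \ref{torsion groups are autodual} asserts that $e_d$ is non-degenerate. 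Passing to a geometric point $\Spec k\arr\stM_{1,1}$, where $E[d]\simeq(\Z/d\Z)^2$, non-degeneracy forces $e_d$ to be surjective onto $\mu_d$; therefore $\chi$ is trivial on all of $\mu_d$, so taking a primitive $d$-th root of unity $\zeta$ we get $\zeta^m=1$, hence $d\mid m$. Together with $m\mid d$ from the previous paragraph this yields $\duale G_d=d\Z$.

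The only real obstacle is the second inclusion, and within it the delicate point is the surjectivity of $e_d$ onto $\mu_d$: this rests on the non-degeneracy of the Weil pairing from \ref{torsion groups are autodual} (ultimately \cite[§1, Theorem 1]{Mumford1966}), and one has to remember to pass to a geometric fibre in order to read it off, since the character $\chi$ is a priori only defined over the stack $\stM_{1,1}$.
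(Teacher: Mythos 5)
Your argument is correct and follows essentially the same route as the paper: the inclusion $d\Z\subseteq\Imm(\duale G_d\arr\Z)$ comes from \ref{computation of det pi star L to the n} with $n=1$, and the reverse inclusion comes from evaluating a character on a commutator $xyx^{-1}y^{-1}=e_d(x,y)$ equal to a primitive $d$-th root of unity over a geometric point, which exists by the non-degeneracy in \ref{torsion groups are autodual}. The only (harmless) difference is your appeal to \ref{torsion elliptic has no sections} for injectivity, which is not needed for a statement about the image.
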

\begin{proof}
 Call $\alpha\colon  \duale G_d\arr \duale{\mathbb{G}}_m \simeq \Z$ the map in the statement.
By \ref{computation of det pi star L to the n} applied to $n=1$, we see that $d\Z\subseteq \Imm \alpha$.
Let $E$ be an elliptic curve over an algebraically closed field $k$. By \ref{torsion groups are autodual}, it follows that there
exist $x,y\in G_d(k)$ such that $\omega=e_d(x,y)=xyx^{-1}y^{-1}$ is a primitive $d$-root of unity. If $\psi\colon G_d\arr \Gm$ is a morphism and we set $r=\alpha(\psi)$,
 we have $1=\psi(\omega)=\omega^r$, which implies that $d\mid r$.
\end{proof}

\begin{proof}[Proof of theorem \ref{picard group of Xd}]
 By \ref{structure of Xd} we have $\jac_{d,1}\simeq\Bi_{\stM_{1,1}}G_d$ and by \ref{picard of BG} we can conclude that $\Pic\jac_{d,1}\simeq\Pic\stM_{1,1}\oplus \duale{G}_d$.
 By \ref{torsion elliptic has no sections} and \ref{image of dual Gd is dZ} we have $\duale G_d=d\duale{\mathbb{G}}_m \subseteq \duale{\mathbb{G}}_m$
 and by \ref{computation of det pi star L to the n} that
 $$
 \shT=\det\pi_*\shL \otimes (\pi_*\omega_\pi)^{d(d+1)/2 - 1}
 $$
 freely generates $\duale G_d$. Again by \ref{computation of det pi star L to the n} we have
 \[
 \det\pi_*(\shL^n)\simeq \shT^{n^2}\otimes (\pi_*\omega_\pi)^{1-nd(nd+1)/2}
 \]
 Taking into account that $\omega_\pi \simeq \pi^* \pi_* \omega_\pi$ and using projection formula, a direct computation concludes the proof.
\end{proof}

\section{Canonical covers and their discriminant loci.}\label{Canonical covers and their discriminant loci}

In this section we work over a field of characteristic $p\geq 0$ and we fix non negative integers $g$ and $d$ and a positive integer $n$.

\begin{definit}\label{description of Udn and the cover Hdn}
We denote by $\stV_{d,g,n}$ (resp. $\stU_{d,g,n}$) the stack of triples $(p\colon C\arr S,\shQ,s)$ where $(p\colon C\arr S,\shQ)\in \jac_{d,g}$ and $s$ is a section of $\shQ^n$
(resp. that is not identically zero on any of the geometric fibers of $p\colon C\arr S$).

Let $\shC\arr \stU_{d,g,n}$ be the universal curve of $\stU_{d,g,n}$ and $\shL$ be its universal invertible sheaf. By definition, $\shL^n$ carries a
 section and we will call it the \emph{marked section} of $\shL^n$. 
 
 If $d>0$ we denote by $\stH_{d,g,n}$ the zero locus of the marked section of $\shL^n$.
 By \ref{degree one sheaves and sections}
 the map $\stH_{d,g,n}\arr \stU_{d,g,n}$ is a degree $nd$ cover
 and we will call it the \emph{canonical cover} of $\stU_{d,g,n}$.
 Moreover the closed immersion $\stH_{d,g,n}\arr \shC$ defines a functor $\rho_{d,g,n}\colon \stU_{d,g,n}\arr \Hilb^{nd}_{\stM_{g,1}/\stM_g}$: an object
 $(C,\shQ,s)$ is sent to the zero locus $Z(s) \subseteq C$ of $s\in \shQ^n$. Finally we denote by $\stZ_{d,g,n}$ the discriminant locus of the canonical cover $\stH_{d,g,n}\arr\stU_{d,g,n}$ of $\stU_{d,g,n}$.
\end{definit}

The aim of this section is to understand the geometry of the stacks $\stU_{d,g,n}$ and $\stZ_{d,g,n}$. We will prove the following.

\begin{theorem}\label{key proposition for the bad locus}
 Assume $d>0$. Let $\shC\arrdi \pi \stU_{d,g,n}$ be the universal curve of $\stU_{d,g,n}$ and 
 $\shL$ be the universal invertible sheaf over $\shC$. Then $\stZ_{d,g,n}\neq \emptyset$ if and only if $dn\neq 1$ and in this case $\stZ_{d,g,n}$ is flat and surjective over $\stM_g$ and we have:
 \begin{enumerate}
 \item \label{key proposition for the bad locus: zero} $\stZ_{d,g,n}$ is the zero locus in $\stU_{d,g,n}$ of a section of the invertible sheaf
 \[
  (\det \pi_*(\shL^n\otimes \omega_\pi))^2 \otimes (\det \pi_*\omega_\pi)^{-2}
 \]
 \item \label{key proposition for the bad locus: first} if $p\nmid n$ or $g=0$ then $\stZ_{d,g,n}$ is geometrically reduced over $\stM_g$ and, in particular, reduced;
 \item \label{key proposition for the bad locus: second} $\stZ_{d,g,n}$ is irreducible in the following cases: $n=1$; $dn>2g$; $dn=2g$ and $g\geq 3$; $dn=2g-1$ and $g\geq 4$;
 \item \label{key proposition for the bad locus: third} If $g=1$ and $p\neq 2$ then $\stZ_{1,1,2}$
 is a disjoint union of two integral substacks
 of $\stU_{1,1,2}$, one of which is the zero locus of a section of the invertible sheaf
 \[
 \det(\pi_* \shL^2)\otimes (\pi_*\shL)^{-2}
 \]
\end{enumerate}
\end{theorem}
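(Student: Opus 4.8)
The plan is to prove the four assertions in turn, reducing as much as possible to the case $n=1$, where $\stU_{d,g,1}\simeq\Hilb^d_{\stM_{g,1}/\stM_g}$ and $\stH_{d,g,1}$ is the universal degree $d$ subscheme. For the first assertion I would push the tautological exact sequence
\[
0\arr\shL^{-n}\arr\odi\shC\arr\odi{\stH_{d,g,n}}\arr 0
\]
on $\shC$ forward along $\pi$ and take determinants of cohomology. Since $f\colon\stH_{d,g,n}\arr\stU_{d,g,n}$ is finite, $\det f_*\odi{\stH_{d,g,n}}$ is the determinant of cohomology of $\odi{\stH_{d,g,n}}$; by \ref{cbs for elliptic curves} and Serre duality $\R^1\pi_*\odi\shC\simeq\duale{(\pi_*\omega_\pi)}$, and as $\shL^{-n}$ has negative degree while $\shL^n\otimes\omega_\pi$ has degree $nd+2g-2>2g-2$, the determinants of cohomology of $\odi\shC$ and of $\shL^{-n}$ are $\det\pi_*\omega_\pi$ and $\det\pi_*(\shL^n\otimes\omega_\pi)$ respectively. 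Multiplicativity along the sequence then gives
\[
\det f_*\odi{\stH_{d,g,n}}\simeq\det\pi_*\omega_\pi\otimes(\det\pi_*(\shL^n\otimes\omega_\pi))^{-1},
\]
so by definition $\stZ_{d,g,n}$ is the zero locus of the discriminant section, which is a section of $(\det f_*\odi{\stH_{d,g,n}})^{-2}$, the invertible sheaf in the statement. If $nd=1$ the canonical cover is an isomorphism, hence \'etale, so $\stZ_{d,g,n}=\emptyset$; if $nd\geq 2$, over a geometric point $[C]$ of $\stM_g$ I would pick a non-reduced effective divisor $D$ of degree $nd$ on $C$ and, using that $[n]\colon\Picsh^d_C\arr\Picsh^{nd}_C$ is surjective (\ref{multiplication by two on jacobians}), a degree $d$ sheaf $\shQ$ with $\shQ^n\simeq\odi C(D)$; its canonical section yields a point of $\stZ_{d,g,n}$ over $[C]$, giving both non-emptiness and surjectivity onto $\stM_g$. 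Flatness over $\stM_g$ I would deduce from the fact that $\stZ_{d,g,n}$ is an effective Cartier divisor in $\stU_{d,g,n}$ (which is flat over $\stM_g$, and smooth when $\car k\nmid n$) meeting every geometric fibre properly, since a generic section has simple zeros; thus it is a relative Cartier divisor, hence flat.

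The key reduction is the map $\alpha_n\colon\stU_{d,g,n}\arr\stU_{nd,g,1}$, $(C,\shQ,s)\mapsto(C,\shQ^n,s)$, which is the base change of $[n]\colon\jac_{d,g}\arr\jac_{nd,g}$ along $\stU_{nd,g,1}\arr\jac_{nd,g}$, hence faithfully flat by \ref{multiplication by two on jacobians}, and \'etale when $\car k\nmid n$. Because the zero scheme $Z(s)\subseteq C$ depends only on $s$ as a section of the line bundle $\shQ^n$, the canonical cover $\stH_{d,g,n}\arr\stU_{d,g,n}$ is the pullback of $\stH_{nd,g,1}\arr\stU_{nd,g,1}$, and the discriminant being stable under base change one gets $\stZ_{d,g,n}=\alpha_n^{-1}(\stZ_{nd,g,1})$. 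When $\car k\nmid n$ this reduces the (geometric, over $\stM_g$) reducedness in the second assertion to the case $n=1$; when $g=0$ the same reduction works by faithfully flat descent, and there it can also be read off from the classical discriminant of binary forms. For $n=1$, over a geometric point $[C]$ of $\stM_g$ the stack $\stU_{d,g,1}$ becomes $\Sym^d C$ and $\stZ_{d,g,1}$ is the big diagonal with its discriminant scheme structure; in the \'etale-local model $\Sym^2 U\times U^{d-2}$ of $\Sym^d C$ near a generic point of the big diagonal the discriminant section is a unit times $e_1^2-4e_2$, and checking — this is where the characteristic hypothesis enters — that this is squarefree over $\bar k$ gives reducedness.

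For the third assertion, since $\stZ_{d,g,n}\arr\stM_g$ is flat with $\stM_g$ irreducible it is enough to show the geometric fibres are irreducible, and by \ref{dimension of Cartier divisors for stacks} each such fibre has pure codimension $1$ in $\stU_{d,g,n}\times_{\stM_g}\bar k$. For $n=1$ this is immediate: the big diagonal in $\Sym^d C$ is the image of the irreducible subvariety $\{x_1=x_2\}\subseteq C^d$. For general $n$ I would work over a fixed $[C]$ and introduce the incidence correspondence
\[
\mathcal I=\insieme{(\shQ,s,x)}{\shQ\in\Picsh^d_C,\ 0\neq s\in\Hl^0(\shQ^n),\ \ord_x(s)\geq 2}
\]
fibred over $\Picsh^d_C\times C$ with fibre $\PP(\Hl^0(\shQ^n(-2x)))$; the projection $\mathcal I\arr\stZ_{d,g,n}\times_{\stM_g}\bar k$ is generically injective since a general member of $\stZ_{d,g,n}$ has a single double point, so it suffices that $\mathcal I$ be irreducible. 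When $nd>2g$ one has $\deg\shQ^n(-2x)>2g-2$ for all $x$, so $\mathcal I$ is a projective bundle over the irreducible base $\Picsh^d_C\times C$; when $nd=2g$ (resp.\ $nd=2g-1$) the locus in $\Picsh^d_C\times C$ where $\h^0(\shQ^n(-2x))$ jumps has dimension $\leq 1$ (resp.\ $\leq 2$), and a Brill--Noether style dimension count, in which $g\geq 3$ (resp.\ $g\geq 4$) is used, still forces $\mathcal I$ to be irreducible. I expect this borderline case of the third assertion to be the main obstacle.

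For the fourth assertion take $g=1$, $d=1$, $n=2$ and $\car k\neq 2$. If $(C,\shQ,s)\in\stZ_{1,1,2}$ then $Z(s)=2y$ for a section $y$ of $C$ with $\odi C(2y)\simeq\shQ^2$, equivalently with $\odi C(y)\otimes\shQ^{-1}\in\Picsh^0_C[2]$, and this $2$-torsion class splits $\stZ_{1,1,2}$ into the locus where it vanishes and the locus where it does not. On the vanishing locus $y$ is the section with $\odi C(y)\simeq\shQ$ (by \ref{elliptic curves and pic}), so $s$ lies in the image of the fibrewise injective multiplication map $(\pi_*\shL)^{\otimes 2}\arr\pi_*\shL^2$; hence this locus is the zero locus of the composition of the marked section $\odi{\stU_{1,1,2}}\arr\pi_*\shL^2$ with the projection onto the cokernel $\det(\pi_*\shL^2)\otimes(\pi_*\shL)^{-2}$, and it is the complement of the zero section in the line bundle $(\pi_*\shL)^{\otimes 2}$ over $\jac_{1,1}$, so it is integral. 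On the complementary locus the class $\odi C(y)\otimes\shQ^{-1}$ runs over the three non-zero $2$-torsion sections, which the geometric monodromy (isomorphic to $S_3$, using $\car k\neq 2$ so that $\Picsh^0_C[2]$ is \'etale) permutes transitively; hence that locus is connected, and it is reduced by the second assertion, so it too is integral. The two loci are disjoint since the divisors $2y$ differ, which gives the stated decomposition.
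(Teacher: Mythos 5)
Your treatment of part (\ref{key proposition for the bad locus: zero}), and your reduction of everything else to $n=1$ via $(C,\shQ,s)\mapsto(C,\shQ^n,s)$, coincide with the paper's argument (the pushforward of $0\arr\shL^{-n}\arr\odi\shC\arr\odi{\stH_{d,g,n}}\arr 0$ together with \ref{cbs for elliptic curves}, and the map $\Omega$ of \ref{from two to one}); one caveat is that for $g=0$, $p\mid n$ you cannot get reducedness of $\stZ_{d,0,n}$ from that of $\stZ_{nd,0,1}$ by ``faithfully flat descent'' --- reducedness ascends along a flat map only when its fibres are geometrically reduced, which is exactly what the smoothness statement in \ref{from two to one} provides. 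The genuine gap is in part (\ref{key proposition for the bad locus: first}): your criterion is squarefreeness of $e_1^2-4e_2$ in the local model, and this \emph{fails} in characteristic $2$, where $e_1^2-4e_2=e_1^2$ (more generally, in characteristic $2$ the discriminant of the universal monic polynomial is a perfect square, so the discriminant section is a square \'etale-locally along $\stZ$). The statement you are proving contains no hypothesis excluding $p=2$ in the relevant cases ($n=1$; $n$ odd; or $g=0$ with any $n$), so the step you defer to ``the characteristic hypothesis'' has nothing to lean on. The paper argues differently and without characteristic assumptions: Theorem \ref{reducedness of the discriminant locus over Hilbert scheme} computes that the discriminant locus pulled back to $C^{nd}$ has length $2$ at the generic point of the diagonal and then descends along the cover $C^{nd}\arr\Hilb^{nd}_{C/k}$ using \ref{lenght and flatness} and a ramification argument. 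Note that your local model puts real pressure on the characteristic-$2$ case of that descent (the ``unramified'' step is sensitive to the residue extension at the diagonal becoming inseparable), so whichever route you take, characteristic $2$ has to be confronted explicitly rather than waved at.

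For part (\ref{key proposition for the bad locus: second}) your incidence correspondence $\mathcal{I}$ over $\Picsh^d_C\times C$ is a genuinely different route, and the jump-locus count you sketch is more efficient than the paper's (the jump of $h^0(\shQ^n(-2x))$ is only by one, whereas the paper discards the whole preimage of its locus $\stY$), so if carried out it would appear to cover even some cases the paper leaves open. But as written the decisive borderline count is exactly what you declare ``the main obstacle'', and it is the bulk of the paper's proof: there one works globally over $\jac_{d,g}$ with the bundle of principal parts of $\shL^n$, proves surjectivity of the evaluation map away from $\stY$ (swept out by $\omega_C(2\sigma)$, resp.\ $\omega_C(2\sigma-\tau)$), deduces irreducibility of $\stZ_{d,g,n}$ over the complement, and then uses properness, \ref{dimension of image for stacks} and \ref{dimension formulas for stacks} to show no component of the divisor $\stZ_{d,g,n}$ lies over $\stY$; this is where $g\geq 3$, resp.\ $g\geq 4$, enter. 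You should also justify the passage from irreducible geometric fibres to irreducibility of $\stZ_{d,g,n}$ (flat and finitely presented hence open, plus constructibility of geometric irreducibility of fibres); standard, but it must be said.

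In part (\ref{key proposition for the bad locus: third}) there are two holes. First, you produce the point $y$ with $Z(s)=2y$ only fibrewise; to split $\stZ_{1,1,2}$ and to define the section of $\det(\pi_*\shL^2)\otimes(\pi_*\shL)^{-2}$ you need $y$ as a section of the universal curve over $\stZ_{1,1,2}$, and this is where the paper works: the vanishing of the discriminant makes the ideal of $\alA_{\stZ}$ square-zero, giving a retraction and a section $\tau$, and the identity $Z(s)=2\tau$ of divisors uses the reducedness from part (\ref{key proposition for the bad locus: first}). Second, ``connected and reduced'' does not give integral for the non-trivial torsion component; you need irreducibility, which the paper gets because $\stZ_{1,1,2}\arr\overline{\E}[2]$ is a $\mu_2$-gerbe (irreducible fibres) over a base whose non-zero part is connected, the latter proved via the no-section statement \ref{torsion elliptic has no sections} rather than by quoting mod-$2$ monodromy. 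Your identification of the trivial component via the cokernel of $(\pi_*\shL)^{\otimes 2}\arr\pi_*\shL^2$, and its integrality as the complement of the zero section of a line bundle over $\jac_{1,1}$, is a nice simplification of the paper's computation, modulo the family-level point above.
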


We start giving a more precise description of the stacks $\stU_{d,g,n}$, proving in particular that they are algebraic and explaining the relation with the Hilbert
scheme $\Hilb^{nd}_{\stM_{g,1}/\stM_g}$.

\begin{remark}\label{when Vdn is a vector bundle}
Assume $nd>2g-2$. In this case, by \ref{cbs for elliptic curves}, the stack
$\stV_{d,g,n}$ is a vector bundle of rank $nd+1-g$ over $\jac_{d,g}$ corresponding to the locally free sheaf $\pi_*(\shL^n)$, where 
$\pi \colon\shC\arr\jac_{d,g}$ is the universal curve and $\shL$ is the universal invertible sheaf over $\shC$. Moreover $\stU_{d,g,n}$ is the complement
of the zero section of $\stV_{d,g,n}$.
\end{remark}

\begin{prop}\label{Ud1 has Hilbert scheme}
 If $d>0$ the functor $\rho_{d,g,1}\colon \stU_{d,g,1}\arr \Hilb^{d}_{\stM_{g,1}/\stM_g}$ is an equivalence. A quasi-inverse is given by
   \[
  \begin{tikzpicture}[xscale=2.7,yscale=-0.7]
    \node (A0_0) at (0, 0) {$\Hilb^{d}_{\stM_{g,1}/\stM_g}$};
    \node (A0_1) at (1, 0) {$\stU_{d,g,1}$};
    \node (A1_0) at (0, 1) {$(C,Z\subseteq C)$};
    \node (A1_1) at (1, 1) {$(C,\duale \shI_Z,1)$};
    \path (A0_0) edge [->]node [auto] {$\scriptstyle{}$} (A0_1);
    \path (A1_0) edge [|->,gray]node [auto] {$\scriptstyle{}$} (A1_1);
  \end{tikzpicture}
  \]
where $\shI_Z$ is the sheaf of ideals defining $Z$.
\end{prop}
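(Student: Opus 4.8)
The plan is to verify that $\rho_{d,g,1}$ and the proposed quasi-inverse $\Psi\colon (C,Z)\mapsto (C,\duale{\shI_Z},1)$ are both well defined and are inverse to each other up to natural isomorphism. Because the formation of ideal sheaves, of duals of invertible sheaves, and of zero loci of sections all commute with base change (the first one once the subscheme involved is flat over the base), the naturality in the base scheme $S$ and the compatibility with isomorphisms in the fibre groupoids will be formal, so I will argue pointwise over a base scheme $S$ and only indicate the base-change bookkeeping at the end. That $\rho_{d,g,1}$ is well defined is immediate from \ref{degree one sheaves and sections}: for $(C,\shQ,s)\in\stU_{d,g,1}$ the sheaf $\shQ$ has degree $d$, $s\in\shQ$ is non zero on the geometric fibres, and its zero locus $Z(s)\subseteq C$ is a degree $d$ cover of $S$, hence an object of $\Hilb^d_{\stM_{g,1}/\stM_g}$.

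First I would check that $\Psi$ lands in $\stU_{d,g,1}$. Let $(C,Z)\in\Hilb^d_{\stM_{g,1}/\stM_g}$, so $\pi\colon C\arr S$ is a genus $g$ curve and $Z\arr S$ is a degree $d$ cover contained in $C$. Each geometric fibre $Z_s$ is finite over $k(s)$, hence a closed subscheme of the regular curve $C_s$ not equal to $C_s$, so $\shI_{Z_s}$ is invertible, i.e. $Z_s$ is an effective Cartier divisor on $C_s$; since moreover $Z$ is flat over $S$, a standard characterization of relative effective Cartier divisors (a closed subscheme of a flat, locally finitely presented $S$-scheme which is $S$-flat with Cartier fibres is a relative effective Cartier divisor, and its formation commutes with arbitrary base change) shows that $\shI_Z$ is invertible on $C$, that its formation commutes with base change, and that $\shI_Z\otimes k(s)=\shI_{Z_s}=\odi{C_s}(-Z_s)$ has degree $-\deg Z_s=-d$. (Alternatively, applying \ref{cokernel of a geometrically injective map of flat sheaves} to the inclusion $\shI_Z\hookrightarrow\odi C$, whose cokernel $\odi Z$ is $S$-flat, gives the base-change statement and the identification on fibres, which together with the fibrewise criterion for local freeness gives invertibility.) Hence $\duale{\shI_Z}$ has degree $d$; dualizing $\shI_Z\hookrightarrow\odi C$ yields a map $\odi C\arr\duale{\shI_Z}$, that is a section ``$1$'' of $\duale{\shI_Z}$, whose restriction to $C_s$ has zero locus $Z_s\subsetneq C_s$ and is therefore non zero on the geometric fibres. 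So $\Psi(C,Z)=(C,\duale{\shI_Z},1)\in\stU_{d,g,1}$.

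Then I would produce the two natural isomorphisms. For $\Psi\circ\rho_{d,g,1}\simeq\id$: given $(C,\shQ,s)$ and $Z=Z(s)$, the universally exact sequence $0\arr\shQ^{-1}\arr\odi C\arr\odi Z\arr 0$ from the proof of \ref{degree one sheaves and sections} identifies $\shQ^{-1}$, via multiplication by $s$, with the ideal $\shI_Z$; dualizing gives an isomorphism $\shQ\simeq\duale{\shI_Z}$, and unwinding the definitions this isomorphism carries the section $s$ to the canonical section $1$ of $\duale{\shI_Z}$, so it is an isomorphism $(C,\shQ,s)\simeq\Psi\rho_{d,g,1}(C,\shQ,s)$. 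For $\rho_{d,g,1}\circ\Psi\simeq\id$: for $(C,Z)$ the zero locus of the section $1$ of the invertible sheaf $\duale{\shI_Z}$ is the closed subscheme cut out by the image of the dual map $\shI_Z=\duale{(\duale{\shI_Z})}\arr\odi C$, which, since $\shI_Z$ is invertible and hence reflexive, is again the inclusion $\shI_Z\hookrightarrow\odi C$; therefore $Z(1)=Z$ scheme-theoretically. Since $\odi Z$ is $S$-flat the formation of $\shI_Z$ commutes with arbitrary base change of $S$ and with pullback along $C'\arr C$, so both isomorphisms are natural and stable under base change, and $\rho_{d,g,1}$ and $\Psi$ are mutually quasi-inverse equivalences of stacks over $\stM_g$.

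I expect the only genuinely non-formal point to be the invertibility of $\shI_Z$ on $C$, namely the assertion that a closed subscheme of a smooth $S$-curve that is finite and flat over $S$ is automatically a relative effective Cartier divisor. I would handle it either by invoking the relative-Cartier-divisor characterization quoted above, or, for a self-contained argument, by working étale-locally on $C$ where $C\simeq\A^1_S$: there one lifts a monic degree $d$ generator of $\shI_{Z_s}$ to $\shI_Z$ and uses Nakayama together with equality of ranks to conclude that $\shI_Z$ is principal and generated by a non-zero-divisor. Once this is settled, matching the two composites with the identity and checking naturality are routine.
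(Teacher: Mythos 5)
Your proposal is correct, and its overall shape agrees with the paper's: the only non-formal point is the invertibility of $\shI_Z$, after which both you and the paper treat the two composite checks as routine (the paper literally dismisses them as ``standard'', while you spell them out via the sequence $0\arr \shQ^{-1}\arr\odi C\arr\odi Z\arr 0$ and the identification of the zero locus of $1\in\duale{\shI_Z}$ with $Z$, which is a welcome amount of extra detail). Where you genuinely diverge is in how invertibility of $\shI_Z$ is proved. The paper uses the smoothness of $\Hilb^d_{\stM_{g,1}/\stM_g}$ over $\stM_g$ (recorded in its Notations) to reduce to the case where $S$, and hence $C$, is smooth over the ground field; there flatness of $Z$ gives $\dim_{k(p)}\shI_Z\otimes k(p)=1$ at every point, and constancy of the fibre rank on the reduced scheme $C$ yields local freeness. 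You instead observe that each geometric fibre $Z_s$ is an effective Cartier divisor on the smooth curve $C_s$ and invoke the relative effective Cartier divisor criterion (equivalently, $S$-flatness of $\shI_Z$ as the kernel of $\odi C\arr\odi Z$ together with the fibrewise criterion for flatness, in the spirit of \ref{cokernel of a geometrically injective map of flat sheaves}). Your route needs no smoothness or reducedness of the base, and it delivers at the same time the compatibility of $\shI_Z$ with arbitrary base change, which you then use to make the naturality and degree statements immediate; the paper's route is shorter given the smoothness fact it has already set up, but leaves the base-change bookkeeping implicit in the word ``standard''. Either argument is complete, and your alternative étale-local argument on $\A^1_S$ (monic generator plus Nakayama) would also work as a self-contained substitute.
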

\begin{proof}
 Given an object
 $Z\subseteq C \in \Hilb^d_{\stM_{g,1}/\stM_g}(S)$, we have to prove that $\shI_Z$ is invertible over $C$.
 Since $\Hilb^d_{\stM_{g,1}/\stM_g}$ is smooth over $\stM_g$, we can assume that $S$ and therefore $C$ are smooth
 over the base field.
 By flatness of $Z$, $I_Z$ is invertible on the fibers of $C\arr S$ and
therefore $\dim_{k(p)} \shI_Z\otimes k(p)=1$ for all $p\in C$, which implies that $I_Z$ is invertible over $C$.
The above discussion shows that the functor in the statement is well defined. The fact that the functors are quasi-inverses of each other is standard.
\end{proof}

\begin{prop}\label{the zero case and mu n}
 If $(C,\shQ,s)\in \stU_{0,g,n}$ then $s\colon \odi C\arr \shQ^n$ is an isomorphism. Moreover 
 the functor $\stU_{0,g,n}\arr (\Picsh^0_{\stM_{g,1}/\stM_g})[n]$ is a $\mu_n$-gerbe.
\end{prop}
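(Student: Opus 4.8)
The plan is to verify the two assertions of Proposition \ref{the zero case and mu n} separately, starting with the claim about the section $s$. Given $(C,\shQ,s)\in\stU_{0,g,n}$, the sheaf $\shQ^n$ has degree $0$ on every geometric fiber of $\pi\colon C\arr S$, and $s\colon\odi C\arr\shQ^n$ is by definition nonzero on each geometric fiber. On a geometric fiber, a degree $0$ invertible sheaf with a nonzero global section is trivial, and the section is then nowhere vanishing; so $s$ is an isomorphism fiberwise. First I would conclude from Proposition \ref{cokernel of a geometrically injective map of flat sheaves} (applied to the map $\alpha=s$, with $\shH=\shQ^n$ flat over $S$ and injective on geometric fibers) that $s$ is injective with flat cokernel; but the cokernel is supported on the vanishing locus of $s$, which is empty in every fiber, hence empty, so $s$ is surjective as well and therefore an isomorphism. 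Equivalently one can invoke \ref{degree one sheaves and sections} in degree $0$, or Nakayama fiberwise; either way this part is routine.

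Once $s$ is an isomorphism, it gives a canonical trivialization $\odi C\simeq\shQ^n$, i.e. $[\shQ]\in(\Picsh^0_{C/S})[n]$, which is exactly the data of the map $\stU_{0,g,n}\arr(\Picsh^0_{\stM_{g,1}/\stM_g})[n]$: to $(C,\shQ,s)$ one associates $(C\arr S,[\shQ])$ together with the witness $s$ that $n[\shQ]=0$. To show this is a $\mu_n$-gerbe I would argue that it is representable, flat, and locally of finite presentation, with geometric fibers $\Bi\mu_n$; equivalently, fppf-locally on the target it admits a section and the inertia is $\mu_n$. For the section: over an $S$-scheme $T$ mapping to $(\Picsh^0_{\stM_{g,1}/\stM_g})[n]$ we have a genus $g$ curve $C\arr T$ and a class $\chi\in(\Picsh^0_{C/T})[n]$; fppf-locally on $T$ the class $\chi$ is represented by an honest invertible sheaf $\shQ$ of degree $0$ with $\shQ^n\simeq\odi C$, and choosing such an isomorphism $s$ (which exists fppf-locally since $\pi_*\odi C=\odi T$, so the choices form a $\Gm$-torsor, trivial fppf-locally — indeed Zariski-locally) produces a lift to $\stU_{0,g,n}$. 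Thus the map is an fppf epimorphism.

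It then remains to identify the automorphism sheaf of a point of $\stU_{0,g,n}$ over a fixed point of $(\Picsh^0_{\stM_{g,1}/\stM_g})[n]$. An automorphism of $(C,\shQ,s)$ fixing the underlying curve and the class $[\shQ]$ is an automorphism $\varphi\colon\shQ\arr\shQ$, i.e. a unit $u\in\odi S^*$ (using $\pi_*\odi C=\odi S$ and that $\Autsh(\shQ)=\Gm$ since $\shQ$ is invertible on a curve with $\pi_*\odi C=\odi S$), subject to compatibility with $s$, namely $u^n s=s$, hence $u^n=1$ because $s$ is an isomorphism. So the relative inertia is $\mu_n$, and combined with the fact that the map is an fppf epimorphism, flat and locally of finite presentation (all clear since source and target are algebraic and the map is representable by the inertia computation), we conclude it is a $\mu_n$-gerbe. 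The only point requiring a little care — the ``hard part'', such as it is — is the bookkeeping showing the map is representable and that fppf-locally lifts exist with precisely $\mu_n$ ambiguity; once the fiberwise picture (degree $0$ sheaf with nonvanishing section $\Leftrightarrow$ trivialization, ambiguity $\Gm$, compatibility with $s$ cutting it to $\mu_n$) is in place, this is a standard descent argument using \ref{multiplication by two on jacobians} for the local representability of $n$-torsion classes and \ref{cbs for elliptic curves}(1) for the base-change behaviour of $\pi_*$.
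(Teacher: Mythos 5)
Your first assertion and the fppf-local existence of lifts are fine (and agree with the paper's argument), and your computation of the relative automorphism sheaf is correct. The gap is in the passage from these facts to ``$\mu_n$-gerbe''. Being a gerbe requires two local conditions: besides the existence of objects fppf-locally on the target, one must show that any two objects of $\stU_{0,g,n}$ over $T$ whose images in $(\Picsh^0_{\stM_{g,1}/\stM_g})[n]$ are identified become isomorphic, compatibly, after an fppf cover of $T$; only then does the inertia computation identify the band with $\mu_n$. You verify existence and inertia, but local transitivity is nowhere addressed, and the criterion you substitute for the definition --- ``fppf-locally on the target it admits a section and the inertia is $\mu_n$'' --- is not equivalent to being a gerbe: for a nontrivial degree $2$ \'etale cover $S'\arr S$, the stack $\Bi_S\mu_n\times_S S'\arr S$ (or $\Bi\mu_n\sqcup\Bi\mu_n$ over a point) is flat, locally of finite presentation, an fppf epimorphism with relative inertia $\mu_n$, and is not a gerbe. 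Your stronger clause ``with geometric fibers $\Bi\mu_n$'' excludes this example, but you neither verify it nor explain why it would imply the gerbe property over arbitrary bases; and the parenthetical claim that the map is ``representable'' cannot be right precisely because its relative inertia is nontrivial.

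The missing step is easy and is in effect what the paper's proof supplies. Given $(C,\shQ,s)$ and $(C,\shQ',s')$ over $T$ with $[\shQ]=[\shQ']$, Lemma \ref{cbs for elliptic curves} gives that $\shN=p_*(\shQ'\otimes\shQ^{-1})$ is invertible with $\shQ'\otimes\shQ^{-1}\simeq p^*\shN$, and $s,s'$ induce a trivialization of $\shN^n$; the pair $(\shN,\text{trivialization of }\shN^n)$ is a $\mu_n$-torsor on $T$, trivial after an fppf cover (genuinely fppf, not Zariski --- one must extract an $n$-th root --- in contrast with the $\Gm$-torsor you used for existence of sections), and trivializing it produces the required isomorphism $(C,\shQ,s)\simeq(C,\shQ',s')$. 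The paper's proof packages existence, transitivity and the band in one stroke: after base change along a $T$-point given by $(C,[\shL])$ with $p_*(\shL^n)$ trivialized, it sends $(\shQ,s)$ to $p_*(\shL\otimes\shQ^{-1})$ together with the induced trivialization of its $n$-th power, obtaining an explicit equivalence of the fiber with $\Bi_T\mu_n$. With the patch above added, your argument amounts to the same thing.
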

\begin{proof}
 The first claim follows from the fact that a non zero section of a degree $0$ invertible sheaf on a proper curve over an algebraically closed field
 always generates it. Denote by $F$ the functor in the statement. Given $\xi\colon T\arr (\Picsh^0_{\stM_{g,1}/\stM_g})[n]$ 
 denote by $\stY\arr T$ the base change of $F$ along $\xi$. In order to prove that $\stY$ is a $\mu_n$-gerbe, we can assume that $\xi$ is
 given by a curve $p\colon C\arr T$ and the class of some invertible sheaf $\shL$ on $C$. Since $n[\shL]=0$ in $\Picsh^0_C$ the sheaf 
 $p_*(\shL^n)$ is invertible by \ref{cbs for elliptic curves} and we can assume that it is trivial. An object of $\stY(T)$
 is a pair $(\shQ,s)$ where $\shQ$ is an invertible
 sheaf over $C$, $s\colon \odi {C}\arr \shQ^n$ is an isomorphism and $[\shQ]=[\shL]$. Those data define an invertible sheaf 
 $\shT=p_*(\shL\otimes \shQ^{-1})$ with an isomorphism $\odi T \arr \shT^n$. Since $T$ is arbitrary we get a functor $\stY\arr \Bi_T \mu_n$
 which is easily seen to be an equivalence.
\end{proof}

\begin{prop}\label{multiplication by n in Xd}
 The map
   \[
  \begin{tikzpicture}[xscale=2.1,yscale=-0.6]
    \node (A0_0) at (0, 0) {$\jac_{d,g}$};
    \node (A0_1) at (1, 0) {$\jac_{dn,g}$};
    \node (A1_0) at (0, 1) {$(C,\shQ)$};
    \node (A1_1) at (1, 1) {$(C,\shQ^n)$};
    \path (A0_0) edge [->]node [auto] {$\scriptstyle{n}$} (A0_1);
    \path (A1_0) edge [|->,gray]node [auto] {$\scriptstyle{}$} (A1_1);
  \end{tikzpicture}
  \]
is the composition of a $\mu_n$-gerbe followed by a surjective cover and, if $p\nmid n$ or $g=0$, it is smooth.
\end{prop}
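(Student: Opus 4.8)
The plan is to factor the map $(C,\shQ)\mapsto(C,\shQ^n)$ through a fibre product over the relative Picard stacks. Write $\jac_{d,g}\to\Picsh^d_{\stM_{g,1}/\stM_g}$ and $\jac_{dn,g}\to\Picsh^{dn}_{\stM_{g,1}/\stM_g}$ for the $\Gm$-gerbes of \ref{smoothness of the universal Jacobian}. Since the $\Picsh^d$-class of $\shQ^n$ is $[n]$ applied to the class of $\shQ$, the map fits into a commutative square over $\stM_g$ with the morphism $[n]\colon\Picsh^d_{\stM_{g,1}/\stM_g}\to\Picsh^{dn}_{\stM_{g,1}/\stM_g}$ of \ref{multiplication by two on jacobians}. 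Setting
\[
\stP=\jac_{dn,g}\times_{\Picsh^{dn}_{\stM_{g,1}/\stM_g}}\Picsh^d_{\stM_{g,1}/\stM_g}
\]
(the fibre product along $[n]$), this square produces a factorization $\jac_{d,g}\arrdi{F}\stP\arrdi{G}\jac_{dn,g}$, and the whole argument is the analysis of $F$ and $G$.

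First I would treat $G$, which by construction is the base change of $[n]$ along the $\Gm$-gerbe $\jac_{dn,g}\to\Picsh^{dn}_{\stM_{g,1}/\stM_g}$. By \ref{multiplication by two on jacobians} the map $[n]$ is a cover of degree $n^{2g}$, is surjective, and is étale when $n$ is invertible on the base; covers, surjectivity and étaleness are stable under base change, so $G$ inherits all of these. When $g=0$ one checks that $\Picsh^d_{\stM_{0,1}/\stM_0}\simeq\stM_0$ for every $d$ (fppf-locally a genus-zero curve is $\Pj^1$, carrying a unique degree-$d$ class) and that $[n]$ is the identity, so in that case $G$ is an isomorphism.

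Next I would show $F$ is a gerbe banded by $\mu_n$. For fppf surjectivity, take $(C,\shR,\alpha)\in\stP(T)$ with $[n]\alpha=[\shR]$; since $\jac_{d,g}\to\Picsh^d_{\stM_{g,1}/\stM_g}$ is an epimorphism we may, fppf-locally on $T$, represent $\alpha$ by an honest degree-$d$ invertible sheaf $\shQ$ on the universal curve $\shC_T\arrdi{\pi}T$. Then $[\shQ^n\otimes\shR^{-1}]=0$ in $\Picsh^0_{\shC_T/T}$, so by \ref{cbs for elliptic curves}(1), applied fppf-locally, $\shQ^n\otimes\shR^{-1}\simeq\pi^*\shM$ for an invertible sheaf $\shM$ on $T$; extracting an $n$-th root $\shN^n\simeq\shM$ fppf-locally (the $n$-th power map on $\Gm$ is an fppf covering) and replacing $\shQ$ by $\shQ\otimes\pi^*\shN^{-1}$ we may assume $\shQ^n\simeq\shR$, so $(C,\shQ)\in\jac_{d,g}(T)$ lifts $(C,\shR,\alpha)$. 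For the band, a relative automorphism of $(C,\shQ)$ over $\stP$ is an automorphism of $\shQ$ as a sheaf on $\shC_T$, i.e. a unit $\lambda\in\Gm(T)$, which induces the identity on $\shQ^n\simeq\shR$ exactly when $\lambda^n=1$; hence the relative inertia of $F$ is $\mu_n$, and the same $n$-th-root bookkeeping shows any two lifts of a point of $\stP$ are fppf-locally related by a section of $\mu_n$, so the diagonal of $F$ is an fppf epimorphism as well. Thus $F$ is a $\mu_n$-gerbe, smooth precisely when $n$ is invertible on the base.

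Combining the two steps, $\jac_{d,g}\to\jac_{dn,g}$ is the $\mu_n$-gerbe $F$ followed by the surjective cover $G$; when $p\nmid n$ both $F$ (as $\mu_n$ is then smooth) and $G$ (then étale) are smooth, and when $g=0$ the cover $G$ is an isomorphism. The step I expect to need the most care is the verification that $F$ is genuinely a $\mu_n$-gerbe — concretely, controlling how two invertible sheaves on $\shC_T$ with the same class in $\Picsh^d$ and with isomorphic $n$-th powers differ, which is exactly where one leans on \ref{cbs for elliptic curves}(1) and on the $n$-th power map of $\Gm$ being an fppf cover; everything else reduces to formal base-change and descent bookkeeping.
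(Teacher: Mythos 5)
Your decomposition is the one the paper uses: you factor the map through the fibre product of $\jac_{dn,g}$ with $[n]\colon \Picsh^d_{\stM_{g,1}/\stM_g}\arr \Picsh^{dn}_{\stM_{g,1}/\stM_g}$ (the paper's $\stF$), handle the second map by base change from \ref{multiplication by two on jacobians}, and prove the first map is a $\mu_n$-gerbe. The only real difference is that you verify the gerbe property by hand (local lifting via \ref{cbs for elliptic curves} and an $n$-th root of a line bundle from the base, relative inertia $=\mu_n$), whereas the paper identifies the fibres of this map with base changes of $\stU_{0,g,n}\arr (\Picsh^0_{\stM_{g,1}/\stM_g})[n]$ and invokes \ref{the zero case and mu n}; that part of your argument is fine (and the $n$-th root of the line bundle $\shM$ on $T$ even exists Zariski-locally, where $\shM$ is trivial, so no appeal to the fppf cover $\Gm\arrdi{n}\Gm$ is needed).

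There is, however, a genuine gap in the smoothness part. You claim that $F$ is ``smooth precisely when $n$ is invertible on the base''; this is false, and with it your proof does not establish (indeed it would contradict) the case $g=0$, $p\mid n$, which is part of the statement and is used later (e.g.\ in \ref{from two to one} and \ref{key proposition for the bad locus} for $g=0$ in arbitrary characteristic). A gerbe banded by $\mu_n$ is always a smooth morphism: smoothness is fppf-local on the base, fppf-locally the gerbe becomes $\Bi_{S'}\mu_n$, and by the Kummer sequence $\Bi\mu_n\simeq [\Gm/\Gm]$ (with $t$ acting by $t\cdot x=t^nx$), a quotient of a smooth scheme by a smooth group, hence smooth in every characteristic --- what fails when $p\mid n$ is only that $\mu_n$ is not \'etale, so the gerbe is not Deligne--Mumford. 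With this correction your argument closes: $F$ is always smooth, and $G$ is smooth when $p\nmid n$ (then $[n]$ is \'etale) or $g=0$ (degree $n^{2g}=1$, so $G$ is an isomorphism), which is exactly the asserted range.
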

\begin{proof}
 Consider the diagram
   \[
  \begin{tikzpicture}[xscale=3.1,yscale=-1.2]
    \node (A0_0) at (0.3, -0.4) {$\jac_{d,g}$};
    \node (A0_1) at (1, 0) {$\stF$};
    \node (A0_2) at (2, 0) {$\jac_{dn,g}$};
    \node (A1_1) at (1, 1) {$\Picsh^d_{\stM_{g,1}/\stM_g}$};
    \node (A1_2) at (2, 1) {$\Picsh^{dn}_{\stM_{g,1}/\stM_g}$};
    \path (A0_0) edge [->]node [auto,swap] {$\scriptstyle{h}$} (A0_1);
    \path (A0_1) edge [->]node [auto] {$\scriptstyle{}$} (A0_2);
    \path (A0_2) edge [->]node [auto] {$\scriptstyle{}$} (A1_2);
    \path (A1_1) edge [->]node [auto] {$\scriptstyle{[n]}$} (A1_2);
    \path (A0_0) edge [->,bend right=40]node [auto] {$\scriptstyle{\Omega}$} (A0_2);
    \path (A0_1) edge [->]node [auto] {$\scriptstyle{}$} (A1_1);
  \end{tikzpicture}
  \]
where the square diagram is Cartesian. By \ref{multiplication by two on jacobians} the map $[n]$ and therefore the map $\stF\arr\jac_{dn,g}$ are covers of degree $n^{2g}$ and they are \'etale if $p\nmid n$ or $g=0$. It remains to prove that $h\colon \jac_{d,g}\arr \stF$ is a $\mu_n$ gerbe. 
An object of $\stF$ over a scheme $S$ is a triple
$(C,\chi,\shQ)$ where $(C,\shQ)\in \jac_{dn,g}(S)$, $\chi\in \Picsh^d_C(S)$ satisfying $[n]\chi=[\shQ]$. The map $h$ sends $(C,\shT)$ to $(C,[\shT],\shT^n)$. Let $\stY\arr T$ be the base change of $h$ along a map $T\arr \stF$ given by the data $(q\colon C\arr T,\chi,\shQ)$. The objects in $\stY(T)$ are pairs $(\shT,\lambda)$ where $\shT$ is a degree $d$ invertible sheaf on $C$ with $\chi=[\shT]$ and $\lambda\colon \shT^n\arr \shQ$ is an isomorphism.
We can assume $\chi=[\shT_0]$ and, since $[n]\chi=[\shQ]$ means $[\shT_0^n]=[\shQ]$ so that $\shT_0^n \simeq \shQ \otimes q^*\shR$ for some invertible sheaf $\shR$ on $T$ by \ref{cbs for elliptic curves}, that $\stY(T)\neq \emptyset$.
In this case $\stY(T)$ is isomorphic to the category of pairs $(\overline \shT, \mu)$ where $\overline \shT$ is a degree $0$ invertible sheaf with $[\overline \shT]=0$ in $\Picsh^0_C$ and $\mu\colon \overline \shT^n\arr \odi C$ is an isomorphism. It follows that $\stY\arr T$ is the base change of the map $\stU_{0,g,n}\arr (\Picsh^0_{\stM_{g,1}/\stM_g})[n]$ along the map $T\arr (\Picsh^0_{\stM_{g,1}/\stM_g})[n]$ given by $(C,[\odi C])$ and it is therefore a $\mu_n$-gerbe (see \ref{the zero case and mu n}).
\end{proof}

\begin{prop}\label{from two to one}
The stack $\stU_{d,g,n}$ is algebraic,  flat and of finite type over $\stM_g$. Moreover
the map 
  \[
  \begin{tikzpicture}[xscale=3.0,yscale=-0.6]
    \node (A0_0) at (0, 0) {$\stU_{d,g,n}$};
    \node (A0_1) at (1, 0) {$\stU_{dn,g,1}$};
    \node (A1_0) at (0, 1) {$(C,\shQ,s)$};
    \node (A1_1) at (1, 1) {$(C,\shQ^n,s)$};
    \path (A0_0) edge [->]node [auto] {$\scriptstyle{\Omega}$} (A0_1);
    \path (A1_0) edge [|->,gray]node [auto] {$\scriptstyle{}$} (A1_1);
  \end{tikzpicture}
  \]
is flat, surjective and of finite type. If $d>0$ then $\rho_{dn,g,1}\circ \Omega = \rho_{d,g,n}$ (see \ref{description of Udn and the cover Hdn} for the notation) and, in particular, $\Omega^{-1}(\stZ_{dn,g,1})=\stZ_{d,g,n}$.
If $p\nmid n$ or $g=0$ then $\Omega$ is smooth and $\stU_{d,g,n}$ is smooth over $\stM_g$.
\end{prop}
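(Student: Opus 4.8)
The plan is to identify $\stU_{d,g,n}$ with a $2$-fiber product over the universal Jacobians and to deduce every assertion from Proposition~\ref{multiplication by n in Xd}. Consider the forgetful maps $\stU_{d,g,n}\to \jac_{d,g}$ and $\stU_{dn,g,1}\to \jac_{dn,g}$ together with the map $n\colon \jac_{d,g}\to \jac_{dn,g}$, $(C,\shQ)\mapsto(C,\shQ^n)$, of Proposition~\ref{multiplication by n in Xd}. First I would check that $\Omega$ fits into a square with these three maps which is commutative — both composites send $(C,\shQ,s)$ to $(C,\shQ^n)$ — and, crucially, $2$-Cartesian: unwinding the fiber product $\jac_{d,g}\times_{\jac_{dn,g}}\stU_{dn,g,1}$ over a scheme $T$, an object is a genus $g$ curve $C$ over $T$ with a degree $d$ sheaf $\shQ$, a triple $(C',\shQ',s')\in \stU_{dn,g,1}$ and an isomorphism $(C,\shQ^n)\simeq (C',\shQ')$ in $\jac_{dn,g}$; transporting the section along this isomorphism identifies the datum, functorially in $T$, with a triple $(C,\shQ,s)$ where $s\in \shQ^n$ is not identically zero on the geometric fibers, i.e.\ with an object of $\stU_{d,g,n}$. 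Thus $\Omega$ is the base change of $n$ along $\stU_{dn,g,1}\to \jac_{dn,g}$. Since $\jac_{d,g}$ and $\jac_{dn,g}$ are algebraic by Remark~\ref{smoothness of the universal Jacobian}, and $\stU_{dn,g,1}$ is algebraic — it is $\Hilb^{dn}_{\stM_{g,1}/\stM_g}$ by Proposition~\ref{Ud1 has Hilbert scheme} when $d>0$, and is equivalent to $\stM_g$ by Proposition~\ref{the zero case and mu n} when $d=0$ — the fiber product $\stU_{d,g,n}$ is algebraic.

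Next I would read off the properties of $\Omega$ from those of $n$. By Proposition~\ref{multiplication by n in Xd} the map $n$ is a $\mu_n$-gerbe followed by a surjective cover; a $\mu_n$-gerbe is fppf-locally $\Bi\mu_n$ with $\mu_n$ finite, flat and finitely presented, and a cover is by definition finite, flat and finitely presented, so $n$ is flat, surjective and of finite type, and is smooth if $p\nmid n$ or $g=0$. All of these properties are stable under the base change of the previous paragraph, so $\Omega$ is flat, surjective and of finite type, and smooth if $p\nmid n$ or $g=0$. Finally, $\stU_{dn,g,1}$ is smooth — in particular flat — and of finite type over $\stM_g$: it is $\Hilb^{dn}_{\stM_{g,1}/\stM_g}$, smooth over $\stM_g$ by the conventions recalled in the Notations, or it is $\stM_g$ itself when $d=0$. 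Composing with $\Omega$ then shows that $\stU_{d,g,n}$ is flat (resp.\ smooth, if $p\nmid n$ or $g=0$) and of finite type over $\stM_g$.

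For the last assertions, assume $d>0$. Both $\rho_{dn,g,1}\circ \Omega$ and $\rho_{d,g,n}$ send a triple $(C,\shQ,s)$ to the zero locus $Z(s)\subseteq C$ of $s\in \shQ^n$ (note $nd=dn$), so they coincide. Equivalently, $\Omega$ pulls back the universal curve together with its degree $dn$ invertible sheaf and marked section on $\stU_{dn,g,1}$ to the universal curve together with $\shL^n$ and its marked section on $\stU_{d,g,n}$, so $\stH_{d,g,n}=\Omega^{-1}(\stH_{dn,g,1})$ as covers of $\stU_{d,g,n}$; since the discriminant section commutes with base change, passing to zero loci gives $\Omega^{-1}(\stZ_{dn,g,1})=\stZ_{d,g,n}$. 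The only point that requires genuine care is the verification that the square of the first paragraph is $2$-Cartesian, including on isomorphisms of objects; granting this and Proposition~\ref{multiplication by n in Xd}, everything else is formal, as flatness, surjectivity, finite type and smoothness are all preserved under base change and composition.
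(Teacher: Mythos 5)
Your proposal is correct and follows essentially the same route as the paper: the paper's proof likewise observes that $\Omega$ is the base change of the $n$-th power map $\jac_{d,g}\arr\jac_{dn,g}$ of \ref{multiplication by n in Xd} along $\stU_{dn,g,1}\arr\jac_{dn,g}$, deduces the properties of $\Omega$ from that proposition, reduces the statements about $\stU_{d,g,n}$ over $\stM_g$ to the case $n=1$ handled by \ref{Ud1 has Hilbert scheme} and \ref{the zero case and mu n}, and gets $\rho_{dn,g,1}\circ\Omega=\rho_{d,g,n}$ from the fact that $\stH_{d,g,n}$ is the pullback of $\stH_{dn,g,1}$. Your extra care in unwinding the $2$-Cartesian square and in noting that the discriminant section commutes with base change just makes explicit what the paper leaves implicit.
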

\begin{proof}
The equality $\rho_{dn,g,1}\circ \Omega = \rho_{d,g,n}$ follows from the fact that the base change of $\stH_{dn,g,1}\arr\stU_{dn,g,1}$ along $\Omega$ is $\stH_{d,g,n}\arr\stU_{d,g,n}$. Since the map $\Omega$ is the base change of $\jac_{d,g}\arr \jac_{dn,g}$ (see \ref{multiplication by n in Xd}) along the map $\stU_{dn,g,1}\arr \jac_{dn,g}$, by \ref{multiplication by n in Xd} we can reduce the problem to the case $n=1$, where all the claims follow from \ref{Ud1 has Hilbert scheme} and \ref{the zero case and mu n}.
\end{proof}

The remaining part of the section is dedicated to the proof of Theorem \ref{key proposition for the bad locus}. In particular in what follows we assume $d>0$.

\begin{proof}[Proof of theorem \ref{key proposition for the bad locus}, (\ref{key proposition for the bad locus: zero})]
 Set $\stH_{d,g,n}=\Spec \alA$. By definition $\stZ_{d,g,n}$ is the zero locus of a section of $(\det\alA)^{-2}$. We have an exact sequence on $\shC$
 \[
 0\arr \shL^{-n}\arr \odi \shC \arr \odi{\stH_{d,g,n}}\arr 0
 \]
 Applying $\pi_*$ we get an exact sequence on $\stU_{d,g,n}$
 \[
 \pi_*\shL^{-n}=0\arr \odi{\stU_{d,g,n}} \arr \alA \arr \R^1\pi_* (\shL^{-n})\arr \R^1\pi_*\odi \shC \arr 0=\R^1\pi_*\odi{\stH_{d,g,n}}
 \]
 where the first and last vanishing follows from \ref{cbs for elliptic curves}. We also have
 \[
 \R^1\pi_* (\shL^{-n}) \simeq \duale{\pi_*(\shL^n\otimes \omega_\pi)} \text{ and }
 \R^1\pi_*\odi \shC\simeq \duale{(\pi_*\omega_\pi)}
 \]
 again by \ref{cbs for elliptic curves}.
 We can now deduce the following formula for the determinant of $\alA$
 \[
 \det \alA \simeq (\det \pi_*(\shL^n\otimes\omega_\pi))^{-1}\otimes \det \pi_*\omega_\pi
 \]
 so that $\det \alA^{-2}$ is isomorphic to the invertible sheaf in the statement.
\end{proof}

\begin{remark}\label{lenght and flatness}
 If $R$ is a local ring, $R\arr S$ is a flat map of rings and $M$ is an $R$-module then
$$
  \l_S(M\otimes_R S)=\l_S(S/m_RS)\l_R(M)
$$
where $\l$ denotes the length function.
\end{remark}

The following Theorem is one of the crucial points of the whole paper.

\begin{theorem}\label{reducedness of the discriminant locus over Hilbert scheme}
 Assume $n\geq 2$. The discriminant locus of the universal degree $n$ cover of $\Hilb^n_{\stM_{g,1}/\stM_g}$ is flat, surjective and geometrically integral over $\shM_g$. In particular it is integral.
\end{theorem}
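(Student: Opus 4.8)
The plan is to handle flatness and surjectivity over $\stM_g$ by soft arguments, to reduce geometric integrality to a statement about the ``big diagonal'' in the symmetric product of a single curve, and to locate the real content in the reducedness of that diagonal.

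Via the equivalence $\stU_{n,g,1}\simeq\Hilb^n_{\stM_{g,1}/\stM_g}$ of \ref{Ud1 has Hilbert scheme} the universal degree $n$ cover is the canonical cover $\stH_{n,g,1}\arr\stU_{n,g,1}$, so the locus in question is $\stZ:=\stZ_{n,g,1}$, the zero locus of the discriminant section, which is stable under base change. The structure map $\Hilb^n_{\stM_{g,1}/\stM_g}\arr\stM_g$ is smooth (\ref{from two to one}, case $n=1$). Over any geometric point $[C]$ of $\stM_g$ the scheme $\Hilb^n_{C/k}$ is integral; a general length $n$ subscheme of $C$ is reduced, while for $n\geq 2$ the subscheme $n\cdot p\subseteq C$ (with $p$ a closed point) is non reduced, so the discriminant section is nonzero but not invertible on $\Hilb^n_{C/k}$. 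Hence $\stZ$ is a Cartier divisor in a scheme smooth over $\stM_g$ meeting, but not containing, every geometric fibre; by the standard criterion for relative effective Cartier divisors this makes $\stZ\arr\stM_g$ flat, and it is surjective since all its geometric fibres are non empty. Granting that these fibres are integral it follows, $\stM_g$ being integral, that $\stZ$ is integral.

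It thus remains to prove that for $k=\bar k$ and $C/k$ a smooth proper genus $g$ curve the discriminant locus $\Delta_n\subseteq\Hilb^n_{C/k}$ of the universal degree $n$ cover is integral. Here $\Hilb^n_{C/k}=C^{(n)}$ is the symmetric product, smooth and integral of dimension $n$, and the universal cover is the universal effective divisor $\shD\subseteq C^{(n)}\times C$; set-theoretically $\Delta_n$ is the locus of non reduced divisors. Irreducibility is immediate, $\Delta_n$ being the image of the proper addition map $C\times C^{(n-2)}\arr C^{(n)}$, $(p,E)\mapsto 2p+E$, with irreducible source. For reducedness, $\Delta_n$ is the zero locus of a section of a line bundle on the regular scheme $C^{(n)}$, hence a hypersurface, hence Cohen--Macaulay with no embedded components; being irreducible it is reduced as soon as it is generically reduced. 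To see the latter, pick pairwise distinct closed points $p,q_1,\dots,q_{n-2}\in C$ and pairwise disjoint affine opens $V_0\ni p$, $V_i\ni q_i$; then $W:=V_0^{(2)}\times V_1\times\cdots\times V_{n-2}$ is a nonempty open of $C^{(n)}$ meeting $\Delta_n$, and over $W$ the universal cover $\shD$ is the disjoint union of the pullback of the universal degree $2$ cover of $V_0^{(2)}$ and $n-2$ trivial degree one covers. Since the discriminant section of a disjoint union of covers is the tensor product of the discriminant sections and a degree one cover has invertible one, $\Delta_n\cap W=\Delta_2(V_0)\times V_1\times\cdots\times V_{n-2}$, where $\Delta_2(V_0)$ is the discriminant locus of the universal degree $2$ cover of $V_0^{(2)}$. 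As $C$ is a smooth curve, \'etale locally on $V_0^{(2)}$ this cover is pulled back from the universal degree $2$ cover of $\Sym^2\A^1=\Spec k[e_1,e_2]$, namely $\Spec k[e_1,e_2][t]/(t^2-e_1t+e_2)$, whose discriminant section is $e_1^2-4e_2$. Hence $\Delta_2(V_0)$, and so $\Delta_n\cap W$, is reduced exactly when $V(e_1^2-4e_2)\subseteq\A^2$ is; and away from characteristic $2$ the polynomial $e_1^2-4e_2$ is of degree one in $e_2$ with invertible leading coefficient, hence irreducible, so it cuts out a smooth (in particular reduced and integral) divisor.

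Collecting these facts, $\Delta_n$ is integral for every geometric point of $\stM_g$, hence $\stZ$ is geometrically integral over $\stM_g$, and with the second paragraph this yields the full statement. The flatness, surjectivity and irreducibility are the soft part; the crux, and the step I expect to be the main obstacle, is the reducedness of $\Delta_n$, which by the localisation above comes down to the claim that the discriminant section of the universal degree $2$ cover vanishes to order one along the diagonal of $\Sym^2\A^1$. This is an elementary computation, but it is the one place where the characteristic enters: $e_1^2-4e_2$ is squarefree in every characteristic other than $2$, whereas it degenerates to $e_1^2$ when $\car k=2$, so for $\car k=2$ the argument (and indeed the conclusion, for the $n=2$ piece) requires either the extra hypothesis $\car k\neq 2$ or a separate treatment.
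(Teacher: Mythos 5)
Your argument is sound where it applies, and for the one substantive step (generic reducedness) it takes a genuinely different route from the paper. The paper gets flatness and irreducibility the same way you do (fibrewise non-vanishing of the discriminant section; image of the big diagonal), but it proves reducedness \emph{upstairs}: it pulls the universal cover back along the quotient $f\colon C^n\arr \Hilb^n_{C/k}$, shows by an explicit trace computation on a plane model that the pulled-back discriminant is $(x_1-x_2)^2$ up to units near the generic point $P$ of $\Delta_{1,2}$, hence has length $2$ there in every characteristic, and then descends via flatness of $f$: writing $Z$ for the discriminant locus downstairs and $Z'=f^{-1}(Z)$, one has $2=\ell\bigl(\odi{Z',P}/m_{f(P)}\odi{Z',P}\bigr)\cdot \ell(\odi{Z,f(P)})$, and the paper excludes the fibre factor being $1$ on the grounds that $f$ is ramified along the diagonal, forcing $\ell(\odi{Z,f(P)})=1$. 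You instead split off the \'etale part of the cover near the generic point of the discriminant and compute the discriminant directly downstairs on a local model of $\Sym^2$, where it is $e_1^2-4e_2$. The two computations are equivalent in spirit; the paper's detour through $C^n$ is what makes its formula characteristic-free, while yours makes the characteristic-$2$ degeneration visible immediately. (Minor point: ``\'etale locally pulled back from $\Sym^2\A^1$'' should be justified either on completions, or by the fact that $S^2$ of an \'etale map is \'etale at those points whose underlying multiset does not split, which holds at the points $2p$ you need; this is routine.)

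Your closing doubt about characteristic $2$ is correct, and it is not a gap you could have closed: with no hypothesis on the characteristic the statement is false in characteristic $2$, and the paper's own proof fails exactly at the exclusion quoted above. In characteristic $2$ the Vandermonde $\prod_{i<j}(x_i-x_j)$ is a \emph{symmetric} function, hence descends to $\Hilb^n_{C/k}$ and vanishes to order $1$ at $f(P)$; therefore $m_{f(P)}$ generates $m_P$ and the fibre of $f$ at $P$ has length $1$. The map $f$ is still ramified there, but only through a purely inseparable degree-$2$ residue field extension, so the implication ``fibre of length $1$ $\Rightarrow$ $f$ unramified'' used by the paper breaks down, and the length formula instead gives $\ell(\odi{Z,f(P)})=2$: the discriminant locus is generically non-reduced along its unique component. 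Your local model shows the same thing ($e_1^2-4e_2=e_1^2$), and for $g=0$, $n=2$ one sees it globally: the universal cover is the universal binary quadratic, whose trace-form discriminant is $b^2-4ac=b^2$ (for cubics, $18abcd-4b^3d+b^2c^2-4ac^3-27a^2d^2=(ad+bc)^2$ in characteristic $2$, so the phenomenon is not special to $n=2$). So the theorem --- and, through it, the characteristic-$2$ instances of \ref{key proposition for the bad locus}, (\ref{key proposition for the bad locus: first}) --- requires $\car k\neq 2$, and no separate treatment can remove that hypothesis; granting it, your proof is complete and establishes the same statement as the paper's.
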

\begin{proof}
 The problem is local on $\stM_g$, therefore we can replace $\stM_g$ by a noetherian scheme $Y$ and $\stM_{g,1}$ by a genus $g$ curve $C$ over $Y$. 
 Set $Z$ for the discriminant locus. By definition $Z$ 
is the zero locus of a section of an invertible sheaf over $\Hilb^n_{C/Y}$. Moreover this section is always non zero over a geometric point of $Y$ because curves over an algebraically closed field always have $n$ distinct rational points. By \ref{cokernel of a geometrically injective map of flat sheaves} we can conclude that $Z$ is flat over $Y$. For the remaining part of the statement
we can assume $Y=\Spec k$, where $k$ is an algebraically closed field.

Let $C'$ be a non-empty smooth open subset of a projective and integral curve over $k$ (we will reduce to a calculation on a open subset of a plane curve).
Given indices $i\neq j\leq n$ we denote by $\Delta_{i,j}(C'^n)$ the effective Cartier divisor of ${C'}^n$ given by
$$
\Delta_{i,j}(C'^n)=\{(p_1,\dots,p_n)\in {C'}^n \st p_i=p_j \}
$$
Let $\shH(C'^{n+1})\subseteq {C'}^{n+1}$ be the zero locus of $1\in \odi{C'^{n+1}}(\Delta_{1,n+1}(C'^{n+1})+\cdots+\Delta_{n,n+1}(C'^{n+1}))$. 
Notice that if $C''\subseteq C'$ is a non-empty open subset then $\shH(C''^{n+1})\arr C''^{n}$ is the restriction of $\shH(C'^{n+1})\arr C'^{n}$. Indeed by definition we have
\begin{displaymath}
 \forall i\ \ \ \Delta_{i,n+1}(C'^{n+1}) \cap (C''^n \times C')=\Delta_{i,n+1}(C''^{n+1})\then \shH(C'^{n+1})\cap (C''^n \times C')=\shH(C''^{n+1})
\end{displaymath}
We claim that $\shH(C'^{n+1})\arr C'^n$ is a degree $n$ cover. Indeed, by discussion above, we can assume that $C'$ is projective: in this case the map $\shH(C'^{n+1})\arr C'^n$ is flat thanks to \ref{cokernel of a geometrically injective map of flat sheaves}, proper, quasi-finite and generically \'etale of degree $n$. Set $Z'$ for the discriminant locus of $\shH(C'^{n+1})$ in $C'^n$ and $P\in {C'}^n$ for the
generic point of $\Delta_{1,2}(C'^n)$, which lies in $Z'$. We will show that $\l(\odi{Z',P})=2$. We first show how to
conclude the proof from this fact. When $C'=C$ the cover $\shH(C^{n+1})$ induces a map
$$
 f\colon C^n\arr \Hilb^n_{C/k}
$$
which factors through an isomorphism $S^nC\simeq \Hilb^n_{C/k}$. In particular $f$ is a cover and $Z'=f^{-1}(Z)$.
Moreover, topologically, $Z=f(\Delta_{1,2}(C^n))$ and it coincides with the ramification locus of $f$.  In particular $Z$ is non empty and irreducible.
Since $\Hilb^n_{C/k}$ is smooth, to see that $Z$ is reduced we have to prove that $\l(\odi{Z,f(P)})=1$.
By \ref{lenght and flatness} and our assumption we have
$$
2=\l(\odi{Z',P})=\l(\odi{Z',P}/m_{f(P)}\odi{Z',P})\l(\odi{Z,f(P)})\geq \l(\odi{Z,f(P)})
$$
and we can exclude the equality because otherwise $f$ will be unramified in $P$, and by transitivity of $S_n$, also over $f(P)$, which is
not the case.

We have to prove that $\l(\odi{Z',P})=2$. Thanks to the above discussion we see that the ring $\odi{Z',P}$ (and therefore the number $\l(\odi{Z',P})$) does not change if we
take an open subset of $C'$. In particular, projecting in $\PP^2_k$, we can assume that $C'$ is of the form
$$
 C'=\Spec A \text{ where } A=k[x,y]_g/(f)
$$
and $f,g$ are polynomials.
We have ${C'}^n=\Spec B$, where
$$
B=A^{\otimes n}=k[x_1,y_1,\dots,x_n,y_n]_{g(x_1,y_1)\cdots g(x_n,y_n)}/(f(x_1,y_1),\dots,f(x_n,y_n))
$$
and $P=(x_2-x_1,y_2-y_1)$. By definition $\shH(C'^{n+1})$ is the spectrum of the $B$-algebra
$$
D=B[\alpha,\beta]_{g(\alpha,\beta)}/(f(\alpha,\beta),\prod_j I_j) \text{ where } I_j=(\alpha-x_j,\beta-y_j)
$$
We have to compute the discriminant locus of the cover $D_P=D\otimes B_P$ over $B_P$ and show that it has length $2$.
Since $C'$ is smooth, $B_P$ is a DVR. In particular we can assume that $P$ is generated by $x_1-x_2$ in $B_P$, so that
$x_1-x_2 \mid y_1-y_2$ in $B_P$.
Notice that
$$
(\alpha-x_1)\cdots(\alpha-x_l),(\alpha-x_{l+1})\in I_1\cdots I_l+I_{l+1}\then (x_{l+1}-x_1)\cdots(x_{l+1}-x_l)\in I_1\cdots I_l+I_{l+1}
$$
in $B[\alpha,\beta]_{g(\alpha,\beta)}/(f(\alpha,\beta))$. 
Looking at the quotient $B/P=A^{\otimes(n-1)}$ we see that $(x_{l+1}-x_1)\cdots(x_{l+1}-x_l)\notin P$ for $l>1$. So $I_1\cdots I_l+I_{l+1}$ is the
trivial ideal in $D_P$ and
applying the Chinese remainder theorem inductively it follows that
$$
D_P=(D_P/I_1 I_2)\times D_P/I_3 \times \cdots \times D_P/I_n
$$
Since $D_P/I_l\simeq B_P$, the discriminant locus of $D_P$ over $B_P$ coincides with the discriminant locus of $E=(D_P/I_1 I_2)$ over $B_P$, which is
a cover of degree $2$, since $D_P$ is a cover of degree $n$ of $B_P$. From $I_1I_2=0$ in $E$ we get relations
$$
\alpha^2=(x_1+x_2)\alpha - x_1x_2\text{ and } (x_1-x_2)\beta=(y_1-y_2)\alpha+x_1y_2-y_1x_2
$$
Since $x_1-x_2$ divides $y_1-y_2$ in $B_P$, $E$ is generated by $1,\alpha$ as $B_P$-module.
Moreover, since $E$ is a free $B_P$-module of rank $2$, $1,\alpha$ is also a $B_P$-basis of $E$. Finally a direct computation shows that
$$
\tr(\alpha)=x_1+x_2\comma \tr(\alpha^2)=(x_1+x_2)^2-2x_1x_2\comma \det \left( \begin{array}{cc}
\tr(1) & \tr(\alpha) \\
\tr(\alpha) & \tr(\alpha^2)
 \end{array} \right) = (x_1-x_2)^2
$$
where $\tr=\tr_{E/B_P}$. The last determinant is the discriminant section of the cover $E/B_P$ and therefore its discriminant locus has length $2$,
as claimed.
\end{proof}

\begin{proof}[Proof of theorem \ref{key proposition for the bad locus}, first sentence and (\ref{key proposition for the bad locus: first})]
 By \ref{from two to one} we have $\stZ_{d,g,n}=\emptyset \iff \stZ_{dn,g,1}=\emptyset$ and, by \ref{reducedness of the discriminant locus over Hilbert scheme}, this happens if and only if $dn=1$. So assume $dn>1$. Again by \ref{from two to one} we can assume $n=1$. The result then follows from \ref{Ud1 has Hilbert scheme} and \ref{reducedness of the discriminant locus over Hilbert scheme}.
\end{proof}

We now deal with the problem of reducibility of the stacks $\stZ_{d,g,n}$.

\begin{lemma}\label{strongly generated for elliptic curves}
 Let $k$ be an algebraically closed field and $C$ be a genus $g$ curve over $k$. If $\shQ$ is an invertible sheaf of degree $d$
 with $d> 2g-2$ and $q\in C(k)$ then the map
 \begin{align}\label{strongly generated map prop}
 \Hl^0(\shQ)\arr \shQ \otimes (\odi{C,q}/m_q^2\odi{C,q})
\end{align}
where $m_q$ is the maximal ideal of $\odi {C,q}$, has cokernel $\Hl^0(\shQ^{-1}\otimes \odi C(2q)\otimes \omega_C)$ and it is surjective if $d>2g$.
\end{lemma}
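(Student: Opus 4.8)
The plan is to realize the map (\ref{strongly generated map prop}) as a segment of the long exact cohomology sequence attached to a natural short exact sequence on $C$. Writing $\odi{2q}$ for the structure sheaf of the length two subscheme $2q\subseteq C$, I would start from
\[
0\arr \shQ\otimes\odi C(-2q)\arr \shQ\arr \shQ\otimes\odi{2q}\arr 0,
\]
obtained by tensoring $0\arr\odi C(-2q)\arr\odi C\arr\odi{2q}\arr 0$ with the invertible sheaf $\shQ$. The first point to check is that $\shQ\otimes\odi{2q}$ is a skyscraper sheaf supported at $q$ with $\Hl^0(\shQ\otimes\odi{2q})\simeq\shQ\otimes(\odi{C,q}/m_q^2\odi{C,q})$, and that under this identification the induced map $\Hl^0(\shQ)\arr\Hl^0(\shQ\otimes\odi{2q})$ is exactly (\ref{strongly generated map prop}), since a global section of $\shQ$ is sent to its class modulo $m_q^2$. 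This is the only mildly fiddly bookkeeping step.

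Next I would pass to cohomology. Since $\deg\shQ=d>2g-2=\deg\omega_C$, the line bundle $\shQ^{-1}\otimes\omega_C$ has negative degree on the smooth proper connected curve $C$, hence no nonzero global sections, so Serre duality gives $\Hl^1(\shQ)=0$. The long exact sequence then truncates to an exact sequence
\[
\Hl^0(\shQ)\arr \shQ\otimes(\odi{C,q}/m_q^2\odi{C,q})\arr \Hl^1(\shQ\otimes\odi C(-2q))\arr 0,
\]
identifying the cokernel of (\ref{strongly generated map prop}) with $\Hl^1(\shQ\otimes\odi C(-2q))$. One more application of Serre duality identifies this group with $\duale{\Hl^0(\shQ^{-1}\otimes\odi C(2q)\otimes\omega_C)}$, which as a $k$-vector space is $\Hl^0(\shQ^{-1}\otimes\odi C(2q)\otimes\omega_C)$, as claimed.

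Finally, for the surjectivity statement, if $d>2g$ then $\deg(\shQ\otimes\odi C(-2q))=d-2>2g-2$, and the same negative-degree argument applied to the Serre dual of $\shQ\otimes\odi C(-2q)$ gives $\Hl^1(\shQ\otimes\odi C(-2q))=0$, so (\ref{strongly generated map prop}) is surjective. I do not expect a genuine obstacle here: the whole argument is a direct Serre duality and Riemann--Roch computation, and the only place calling for a little care is matching the first non-trivial map of the long exact sequence with the jet map in the statement.
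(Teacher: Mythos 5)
Your proposal is correct and follows essentially the same route as the paper: tensor the ideal-sheaf sequence of the length-two subscheme $2q$ by $\shQ$, use $\Hl^1(\shQ)=0$ (Serre duality plus $d>2g-2$) to identify the cokernel with $\Hl^1(\shQ\otimes\odi C(-2q))\simeq \Hl^0(\shQ^{-1}\otimes\odi C(2q)\otimes\omega_C)$, and get surjectivity for $d>2g$ by a degree count (the paper phrases this last vanishing on the $\Hl^0$ side, you on the $\Hl^1$ side — the same computation). No gaps.
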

\begin{proof}
Consider the exact sequence
\begin{displaymath}
 0\arr \shQ\otimes\odi C(-2q)\arr \shQ\arr \shQ \otimes (\odi{C,q}/m_q^2\odi{C,q})\arr 0
\end{displaymath}
Since $\Hl^1(\shQ)\simeq \Hl^0(\shQ^{-1}\otimes \omega_C)=0$ by degree reasons, applying $\Hl^0$ we get an exact sequence
\begin{displaymath}
 \Hl^0(\shQ) \arr \shQ \otimes (\odi{C,q}/m_q^2\odi{C,q}) \arr \Hl^1(\shQ\otimes\odi C(-2q)) \simeq \Hl^0(\shQ^{-1}\otimes\odi C(2q)\otimes \omega_C)\arr 0
\end{displaymath}
Finally, if $d>2g$ then $\Hl^0(\shQ^{-1}\otimes\odi C(2q)\otimes \omega_C)=0$, again by degree reasons.
\end{proof}

\begin{proof}[Proof of theorem \ref{key proposition for the bad locus}, (\ref{key proposition for the bad locus: second})]
The case $n=1$ follows from \ref{Ud1 has Hilbert scheme}
and \ref{reducedness of the discriminant locus over Hilbert scheme}. So we focus on the case $nd>2g-2$. We remark that the proof is a bit simpler if we have the stronger inequality $nd>2g$, and the intermediate cases require a finer inspection.

Set $\stV=\stV_{d,g,n}$ and $\stU=\stU_{d,g,n}$. By \ref{when Vdn is a vector bundle} $\stV$ is a vector bundle over $\jac_{d,g}$
and $\stU$ is the complement of the zero section. Consider the diagram
  \[
  \begin{tikzpicture}[xscale=1.7,yscale=-1.0]
    \node (A0_0) at (0, 0) {$\stV \times_{\jac_{d,g}} \stC$};
    \node (A0_1) at (1, 0) {$\shC$};
    \node (A1_0) at (0, 1) {$\stV$};
    \node (A1_1) at (1, 1) {$\jac_{d,g}$};
    \path (A0_0) edge [->]node [auto] {$\scriptstyle{}$} (A0_1);
    \path (A1_0) edge [->]node [auto] {$\scriptstyle{}$} (A1_1);
    \path (A0_1) edge [->]node [auto] {$\scriptstyle{\pi}$} (A1_1);
    \path (A0_0) edge [->]node [auto] {$\scriptstyle{}$} (A1_0);
  \end{tikzpicture}
  \]
where $\shC$ is the universal curve over $\jac_{d,g}$. Denote by $\shL$ the universal invertible sheaf over $\shC$, so that $\stV$ corresponds
to $\pi_*\shL^n$.
From \cite[Section 16.7]{EGAIV-4} there exists a locally free sheaf $\shF$ on $\shC$, the $2$-th bundle of principal parts of $\shL^n$, and a map
$\alpha\colon \pi^*\pi_*(\shL^n)\arr \shF$
such that for all algebraically closed fields $k$ and triples $(C,\shQ,q)\in \shC(k)$, where $q\in C(k)$, we have 
$\shF\otimes k \simeq \shQ^n \otimes \odi {C,q}/m_q^2$, where $m_q$ is the maximal ideal of $\odi {C,q}$ and
$$\label{strongly generated map}
\alpha\otimes k \colon \Hl^0(\shQ^n) \simeq \pi^*\pi_*(\shL^n) \otimes k \arr \shF\otimes k \simeq \shQ^n \otimes \odi {C,q}/m_q^2
$$
is the restriction. If $nd>2g$, by \ref{strongly generated for elliptic curves} we can conclude that $\alpha$ is surjective. In this case in what follows set $\stW=\jac_{d,g}$ and $\stY=\emptyset$. If $nd\leq 2g$ we want to find an open substack $\stW$ of $\jac_{d,g}$ over which $\alpha$ is surjective. If $dn=2g$ and $g\geq 3$ consider the map
\begin{displaymath}
\beta\colon \stM_{g,1}\arr \jac_{dn,g}\comma (C\arr S,\sigma)\longmapsto (C,\odi C(2\sigma)\otimes \omega_{C/S})
\end{displaymath}
while if $dn=2g-1$ and $g\geq 4$ the map
\begin{displaymath}
 \beta\colon \stM_{g,1} \times_{\stM_g} \stM_{g,1}\arr \jac_{dn,g}\comma (C\arr S,\sigma,\tau) \longmapsto (C,\odi C(2\sigma-\tau)\otimes \omega_{C/S})
\end{displaymath}
In both cases denote by $\stY'$ the closed substack of $\jac_{dn,g}$ whose topological space is the closure of the image of $\beta$, $\stY=n^{-1}(\stY')$ where $n\colon \jac_{d,g} \arr \jac_{dn,g}$ is the elevation to the $n$-th power and $\stW=\jac_{d,g}-\stY$.

 We will denote by $-_{\stW}$ the restriction to $\stW$. We want to prove that $\alpha_\stW$ is surjective and we can check this on the geometric points of $\shC_\stW$. Given $(C,\shQ,q)\in \shC_\stW(k)$, where $k$ is an algebraically closed field, by \ref{strongly generated for elliptic curves} the cokernel of $\alpha\otimes k$ is $\Hl^0(\shT)$ where $\shT=\shQ^{-n}\otimes \odi C(2q)\otimes \omega_C$. Notice that $\shT$ has degree $-dn+2g$. Assume by contradiction $\Hl^0(\shT)\neq 0$. If $dn=2g$ then $\shT\simeq \odi C$ and therefore $n(C,\shQ)\simeq \beta(C,q)$ which is not the case by construction of $\stW$. Finally if $dn=2g-1$ then by \ref{degree one sheaves and sections} there exists $q'\in C$ such that $\shT\simeq \odi C(q')$, which means $n(C,\shQ)\simeq \beta(C,q,q'
)$ and it is again not possible by construction.

We want to prove that $(\stZ_{d,g,n})_\stW$ is irreducible. Since $\jac_{d,g}$ and $\shC$ are integral and the vector bundle associated with $\pi^*\pi_*(\shL^n)$ is $\stV \times_{\jac_{d,g}} \stC$, the kernel $\stZ$ of $\alpha_\stW$ is an integral closed substack of $(\stV\times_{\jac_{d,g}}\shC)_\stW$.
Let $\widehat \stZ$ be the image of $\stZ$ via the projection $(\stV\times_{\jac_{d,g}}\shC)_\stW \arr \stV_\stW$. We want to prove that 
$(\stZ_{d,g,n})_\stW=\widehat \stZ \cap \stU$ topologically. This will imply the irriducibility of $(\stZ_{d,g,n})_\stW$.

In what follows $k$ will be an arbitrary algebraically closed field. The objects of $\stZ(k)$ are tuples $(C,\shQ,q,s)\in (\stV\times_{\jac_{d,g}}\shC)_\stW (k)$, where $q\in C(k)$ and $s\in \shQ^n$, such that $v_q(s)\geq 2$, 
where $v_q$ denotes the valuation in $q$. Thus the objects of $(\widehat \stZ \cap \stU)(k)$ are triples $(C,\shQ,s)\in \stU_\stW(k)$ 
for which there exists $q\in C(k)$ such that $v_q(s)\geq 2$. The result then follows from the following remark. If $(C,Q,s)\in \stU(k)$ then
the zero locus $Z(s) \subseteq C$ of $s$ is \'etale over $k$ if and only if for all $q\in C(k)$ the ring $\odi {C,q}/s_q( \simeq \odi {Z(s),q})$ is either
$0$ or $k$, that is if and only if for all $q\in C(k)$ we have $v_q(s) < 2$.

If $dn>2g$, so that $\stW=\jac_{d,g}$, then $(\stZ_{d,g,n})_\stW=\stZ_{d,g,n}$ is irreducible as required. So assume $2g-2<dn\leq 2g$ and $g\geq 2$.
Denote by $f\colon \stU\arr \jac_{d,g}$ the structure map. Topologically we have
\begin{displaymath}
 \stZ_{d,g,n}\subseteq \overline{(\stZ_{d,g,n})_\stW} \cup f^{-1}(\stY)
\end{displaymath}
where the closure is taken inside $\stU$. If $\stZ_{d,g,n}=\stU$ (which a posteriori will not be the case) there is nothing to prove. Otherwise, since $\stZ_{d,g,n}\neq \emptyset$, by \ref{dimension of Cartier divisors for stacks} the equality $\stZ_{d,g,n} = \overline{(\stZ_{d,g,n})_\stW}$ follows from $\dim \stU - \dim f^{-1}(\stY)\geq 2$, which we are going to prove. Thanks to \ref{multiplication by n in Xd} the map $\stU\arrdi f \jac_{d,g} \arrdi n \jac_{dn,g}$ has constant relative dimension. Moreover $n^{-1}(\stY')=\stY$ and, by \ref{dimension formulas for stacks}, we get
\begin{displaymath}
 \dim \stU - \dim f^{-1}(\stY)=\dim \jac_{dn,g}-\dim \stY'
\end{displaymath}
If $dn=2g$ set $\stM=\stM_{g,1}$, while if $dn=2g-1$ set $\stM=\stM_{g,1} \times_{\stM_g} \stM_{g,1}$. Let us also write $\delta\colon \jac_{dn,g}\arr \Picsh^{dn}_{\stM_{g,1}/\stM_g}$ for the natural map, and $\gamma=\delta\circ \beta$. Since $\stM$ and $\Picsh^{dn}_{\stM_{g,1}/\stM_g}$ are proper over $\stM_g$ and they are Deligne-Mumford stacks we can conclude that $\gamma(\stM)$ is closed and, by \ref{dimension of image for stacks}, that
\begin{displaymath}
 \dim \gamma(\stM) \leq \dim \stM
\end{displaymath}
Since $\delta \colon \jac_{dn,g}\arr \Picsh^{dn}_{\stM_{g,1}/\stM_g}$ is a $\Gm$-gerbe we also have $\delta^{-1}(\gamma(\stM))=\stY'$, so that, by \ref{dimension formulas for stacks},
\begin{displaymath}
 \dim \jac_{dn,g} - \dim \stY'= \dim \Picsh^{dn}_{\stM_{g,1}/\stM_g} - \dim \gamma(\stM) \geq g+\dim \stM_g - \dim \stM
\end{displaymath}
Since $\dim \stM=1+\dim \stM_g$ if $dn=2g$ and $\dim \stM=2+\dim \stM_g$ if $dn=2g-1$ we get the desired formula.
\end{proof}

When $dn\leq 2g-2$ it is not clear whether $\stZ_{d,g,n}$ is irreducible or not. The main technical issue here is that $\stV_{d,g,n}\arr \jac_{d,g}$ is no longer a vector bundle and $\stU_{d,g,n}\arr \jac_{d,g}$ may not be surjective.

When $dn > 2g-2$ we have shown above that the stack $\stZ_{d,g,n}$ is irreducible except in the following cases: $g=1$ and $(d,n)=(1,2)$; $g=2$ and $(d,n)\in \{(1,4),(2,2),(1,3)\}$; $g=3$ and $(d,n)=(1,5)$. In the remaining part of the section we work out the case $g=1$, $d=1$ and $n=2$ (so that $dn=2g$). As claimed in \ref{key proposition for the bad locus}, (\ref{key proposition for the bad locus: third}) we will see that $\stZ_{1,1,2}$ is reducible. In the other cases just listed, it is again not clear whether $\stZ_{d,g,n}$ is irreducible or not.

\begin{lemma}\label{cover for one of two times a section}
 Assume $p \neq 2$.
 Let $\shC\arr\stS$ be a curve over an algebraic stack with a section $\tau$ and denote by $\shW$ the zero locus of
 $1\in \odi \shC(2\tau)$ in $\shC$. Then $\rho\colon \stW\arr \stS$ is a degree $2$ cover. Moreover $\tau$ factors trough $\stW$
 and the induced map $\rho_*\odi \stW\arr\odi \stS$ is $\tr_{\rho_*\odi \stW}/2$, where $\tr$ denotes the trace map, and its kernel is a square zero ideal.
\end{lemma}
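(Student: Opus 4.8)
The plan is to obtain the first assertions directly from Lemma~\ref{degree one sheaves and sections}, and then to identify the retraction coming from $\tau$ with $\tr/2$ by an explicit local computation.

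First I would note that $1\in\odi \shC(2\tau)$ is the canonical section of the effective relative Cartier divisor $2\tau$, hence it is non zero on every geometric fiber of $\shC\arr\stS$; since $\odi \shC(2\tau)$ has relative degree $2$, Lemma~\ref{degree one sheaves and sections} gives that $\rho\colon\stW\arr\stS$ is a degree $2$ cover. Its ideal sheaf in $\shC$ is $\odi \shC(-2\tau)$, which is contained in the ideal sheaf $\odi \shC(-\tau)$ of $\tau(\stS)$; therefore the closed immersion $\tau\colon\stS\hookrightarrow\shC$ factors through $\stW$, giving a section $\sigma\colon\stS\arr\stW$ of $\rho$. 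Since $\rho$ is affine, $\rho_*\odi \stW$ is a locally free $\odi \stS$-algebra of rank $2$ and $\sigma$ corresponds to an $\odi \stS$-algebra retraction $r\colon\rho_*\odi \stW\arr\odi \stS$ of the structure map; this $r$ is the map in the statement, and applying the exact functor $\rho_*$ to $0\arr\shI\arr\odi \stW\arr\sigma_*\odi \stS\arr 0$ identifies $\Ker r$ with $\rho_*\shI$, where $\shI\subseteq\odi \stW$ is the ideal sheaf of $\sigma(\stS)$.

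Next I would check that $\Ker r$ is square zero. The ideal $\shI$ is the image of $\odi \shC(-\tau)$ in $\odi \stW=\odi \shC/\odi \shC(-2\tau)$, so $\shI^2$ is the image of $\odi \shC(-\tau)^{\otimes 2}=\odi \shC(-2\tau)$, which is zero in $\odi \stW$; hence $\shI^2=0$, and since $\rho_*$ is exact and compatible with multiplication, $(\Ker r)^2=(\rho_*\shI)^2\subseteq\rho_*(\shI^2)=0$.

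It remains to identify $r$ with $\tr/2$. Since the ground field has characteristic $\neq 2$, the integer $2$ is a unit in $\odi \stS$, so $\tr/2$ is well defined and, as $\tr(1)=2$, it sends $1$ to $1$. I would conclude by a computation that is local on $\shC$: there $\odi \shC(-\tau)$ is generated by a nonzerodivisor $t$, hence $\rho_*\odi \stW\simeq\odi \stS[t]/(t^2)$ with $\sigma$ the augmentation $t\mapsto 0$, so that $r(a+bt)=a$, while the trace of multiplication by $a+bt$ in the basis $\{1,t\}$ equals $2a$; thus $(\tr/2)(a+bt)=a=r(a+bt)$. The same can be seen intrinsically from the splitting $\rho_*\odi \stW=\odi \stS\cdot 1\oplus\rho_*\shI$: multiplication by any $x\in\rho_*\shI$ sends $\rho_*\odi \stW$ into $\rho_*\shI$ and kills $\rho_*\shI$ because $\shI^2=0$, hence has zero trace, whereas multiplication by $1$ has trace $2$, so $\tr=2r$. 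There is no deep obstruction here; the only points that require care are checking the hypotheses of Lemma~\ref{degree one sheaves and sections} and organizing the trace computation, which is the actual content of the last assertion.
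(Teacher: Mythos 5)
Your proof is correct, and it differs from the paper's in how the trace identification and the square--zero claim are established. The paper, after getting the degree $2$ cover from Lemma \ref{degree one sheaves and sections} and the factorization of $\tau$ through $\stW$ by definition, does not argue over a general base at all: it observes that the universal situation lives over $\stM_{g,1}$, which is reduced and over which all constructions commute with base change, so both claims (equality of two maps of locally free sheaves, vanishing of the multiplication on the kernel) may be checked on geometric points; there $\rho_*\odi\stW=k[x]/(x^2)$ and everything is immediate. You instead work directly over $\stS$: you identify the ideal of $\stW$ in $\shC$ with $\odi\shC(-2\tau)$, deduce the factorization from $\odi\shC(-2\tau)\subseteq\odi\shC(-\tau)$, identify $\Ker r$ with $\rho_*\shI$ where $\shI$ is the image of $\odi\shC(-\tau)$ in $\odi\stW$, get $\shI^2=0$ from $\odi\shC(-\tau)\cdot\odi\shC(-\tau)=\odi\shC(-2\tau)$, and then compute the trace either in the local model $\odi\stS[t]/(t^2)$ or intrinsically from the splitting $\rho_*\odi\stW=\odi\stS\cdot 1\oplus\rho_*\shI$ (multiplication by an element of $\rho_*\shI$ has zero diagonal blocks, hence zero trace, while $\tr(1)=2$). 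What your route buys is independence from the reduction step: it needs neither the reducedness of the universal base nor the base-change bookkeeping implicit in the paper's one-line reduction, and the intrinsic trace argument even makes the local trivialization of the conormal sheaf dispensable. What the paper's route buys is brevity, since after the reduction the whole content collapses to the computation in $k[x]/(x^2)$. Both arguments use $p\neq 2$ only to divide the trace by $2$, as you point out.
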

\begin{proof}
 The map $\rho \colon \stW\arr \stS$ is a degree $2$ cover thanks to \ref{degree one sheaves and sections} and the section $\stS\arr \shC$ factors through $\stW$ by definition of this last space.
 Denote by $\psi\colon \rho_*\odi \stW\arr \odi \stS$ the induced map. Since $\stM_{g,1}$ is reduced, we can assume $\stS=\Spec k$, where $k$ is an algebraically closed field.
 In this case the result follows because 
 $\rho_*\odi \stW = k[x]/(x^2)$.
\end{proof}

\begin{proof}[Proof of theorem \ref{key proposition for the bad locus}, (\ref{key proposition for the bad locus: third})]
If we denote by $\overline \E$ the universal curve over $\stM_{1,1}$, we are going to show that there is a $\mu_2$-gerbe
$\stZ_{1,1,2}\arr \overline \E[2]$. Since this last group is a disjoint union of two irreducible components and gerbes are
geometrically irreducible, we will conclude that $\stZ_{1,1,2}$ is also a disjoint union of two irreducible components. We will then
study the component over the zero section of $\overline \E[2]$ and represent it as zero locus of a section of an invertible sheaf.

 Set $\E=\shC$ for the universal curve of $\stU_{1,1,2}$.
 The canonical cover $ \stH_{1,1,2}\arr\stU_{1,1,2}$ of $\stU_{1,1,2}$ has degree $2$. By standard theory of double covers it is given by an invertible sheaf $\shT$ over $\stU_{1,1,2}$ and a section $r\in \shT^2$, so that $\stH_{1,1,2}=\Spec \alA$ where $\alA=\odi{\stU_{1,1,2}} \oplus \shT^{-1}$.
 By an easy local computation, the discriminant section of the canonical cover coincides, up to
 an invertible element, with $r$ and therefore $\stZ_{1,1,2}$ is the zero locus of $r$. Set $\stZ=\stZ_{1,1,2}$. 
 We will use the symbol $-_\stZ$ for
 base changes along $\stZ\arr \stU_{1,1,2}$. For instance $\E_\stZ$ is the universal curve over $\stZ$ with universal invertible sheaf $\shL_\stZ$.
 Since $r_\stZ=0$ we have $(\shT_\stZ^{-1})^2=0$
 in $\alA_\stZ$. Therefore the projection $\alA_\stZ \arr \odi \stZ$ is a ring homomorphism
 and thus induces a section $\stZ \arr \stH_\stZ$ and therefore a section
 $\tau\colon \stZ \arr \E_\stZ$. This yields a unique map $\odi {\E_\stZ}(\tau)\arr \shL_\stZ^2$ 
 that sends $1$ to $s$ and therefore a section $s'\in \shN=\shL_\stZ^2\otimes\odi {\E_\stZ}(-\tau)$.
 Since $s'$ is non zero on the geometric fibers of $\pi_\stZ \colon \E_\stZ\arr \stZ$, by \ref{degree one sheaves and sections}
 there exists another section $\tau'\colon \stZ \arr \E_\stZ$ and an isomorphism $\odi {\E_\stZ}(\tau')\simeq \shN$
 sending $1$ to $s'$. Since the cover $(\stH_{1,1,2})_\stZ\arr \stZ$ is topologically an homeomorphism,
 the sections $\tau$ and $\tau'$ coincide on the geometric fibers of $\pi_\stZ$. Since
 $\stZ$ is reduced thanks to \ref{key proposition for the bad locus}, (\ref{key proposition for the bad locus: first}), we can conclude that
 $\tau=\tau'$.
 Moreover the induced isomorphism $\odi {\E_\stZ}(2\tau)\arr \shL_\stZ^2$ sends $1$ to $s$.
 
 Define $\stZ'$ as the stack of tuples $(E,\shG,\tilde \tau,\lambda)$ where $E$ is a genus one curve over $S$, $\shG$ is a degree $0$
 invertible sheaf over $E$, $\tilde \tau$ is a section of $E$ and $\lambda\colon \shG^2\arr \odi E$ is an isomorphism.
 Discussion above shows that we have a map $\stZ\arr \stZ'$ which sends $(E,\shQ,s)$ to $(E,\shG,\tilde \tau,\lambda)$
 where $\tilde \tau$ is induced by the section $\tau\colon \stZ\arr \E_\stZ$, $\shG=\shQ\otimes \odi E(-\tilde  \tau)$
 and the isomorphism $\lambda$
 is the base change of the isomorphism $\odi {\E_\stZ}(2\tau)\arr \shL_\stZ^2$.
 Conversely we can define a map $\stZ'\arr \stZ$ by
 sending $(E,\shG,\tilde \tau,\lambda)$ to $(E,\shQ,s)$ where $\shQ=\shG\otimes \odi E(\tilde \tau)$ and $s$ 
 is the image of $1$ under the isomorphism
 \begin{displaymath}
  \odi E(2\tilde \tau)\simeq \shG^2 \otimes \odi E(2\tilde \tau) \simeq \shQ^2
 \end{displaymath}
 By \ref{cover for one of two times a section} we see that the last functor is well defined and
 that the composition $\stZ'\arr \stZ\arr \stZ'$ is equivalent to the identity.
 Conversely the composition $\stZ\arr\stZ'\arr\stZ$ is equivalent to the identity
because the map $\odi {\E_\stZ}(2\tau)\arr \shL_\stZ^2$ sends $1$ to $s$.

 We define the map
   \[
  \begin{tikzpicture}[xscale=4.0,yscale=-0.6]
    \node (A0_0) at (0, 0) {$\stZ'$};
    \node (A0_1) at (1, 0) {$\overline \E[2]$};
    \node (A1_0) at (0, 1) {$(E,\shG,\tilde\tau,\lambda)$};
    \node (A1_1) at (1, 1) {$(E,\tilde \tau,[\shG])$};
    \path (A0_0) edge [->]node [auto] {$\scriptstyle{p}$} (A0_1);
    \path (A1_0) edge [|->,gray]node [auto] {$\scriptstyle{}$} (A1_1);
  \end{tikzpicture}
  \]
where we identify $\overline \E$ with $\Picsh^0_{\overline \E/\stM_{1,1}}$ (see \ref{elliptic curves and pic}), which is easily seen to be a $\mu_2$-gerbe.

Now we prove that $\overline \E[2]$ is a disjoint union of two irreducible components, one being the zero section $\stM_{1,1}\arr \overline \E[2]$. 
First of all, since
$\overline \E[2]$ is \'etale, the zero section is a connected component of $\overline \E[2]$.
So we need to prove that the complement $H$ is irreducible as well.
But $H\arr \stM_{1,1}$ is an \'etale degree $3$ cover and thus, if $H$ is not connected (and therefore irreducible being smooth), it should have a section,
which is not the case thanks to \ref{torsion groups are autodual} and \ref{torsion elliptic has no sections}.

Since $p\colon\stZ'\arr \overline \E[2]$ is a $\mu_2$-gerbe and thus has irreducible fibers, we can conclude that $\stZ'\simeq \stZ$
is a disjoint union of two irreducible substack,
one of which is $\stZ_0=p^{-1}(\stM_{1,1})$. 
We identify $\stZ'$ with $\stZ$ and we are going to write $\stZ_0$ as the zero locus of a section of the invertible sheaf in the statement.
So with an object $(E,\shQ,s)\in \stZ$ are associated a section $\tilde \tau$ of $E$, base change of $\tau\colon \stZ\arr \E_\stZ$ and an
isomorphism $\odi E(2\tilde \tau)\arr \shQ^2$ sending $1$ to $s$, base change of the isomorphism $\odi {\E_\stZ}(2\tau)\arr \shL_\stZ^2$.
The objects of $\stZ_0$ are the triples $(E,\shQ,s)\in \stZ(S)$
such that $\shQ$ and $\odi E(\tilde \tau)$ differ by an
invertible sheaf from the base $S$, that is $[\shQ]=[\odi E(\tilde \tau)]$ in $\Picsh^1_{E/S}$. 
Since $\shL$ is an invertible sheaf of degree $1$ on $\E$, $\shR=\pi_*\shL$ is an invertible sheaf by \ref{cbs for elliptic curves}
and there exists a unique section $\sigma\colon \stU_{1,1,2}\arr \E$ with an isomorphism $\shL\simeq \odi \E(\sigma)\otimes \pi^*\shR$ by
\ref{elliptic curves and pic}.

Let $\shW$ be the zero locus in $\E$ of the section $1\in \odi \E(2\sigma)$. The induced map $\shW\arr \stU_{1,1,2}$ is a degree $2$ cover
by \ref{cover for one of two times a section}.
Tensoring the exact sequence defining $\odi \shW$ by $\shL^2$, we get an exact sequence
\[
0\arr \pi^*\shR^2\arrdi \alpha \pi^*\shR^2\otimes \odi E(2\sigma) \simeq \shL^2\arr \odi W \otimes \shL^2 \arr 0
\]
where $\alpha(x)=x\otimes 1$. Applying $\pi_*$ and taking into account
\ref{cbs for elliptic curves} we get an exact sequence 
\[
0\arr \shR^2\arrdi{ \pi_* \alpha} \pi_*\shL^2\arr \pi_*(\odi W \otimes \shL^2) \arr \R^1\pi_*(\pi^*\shR^2) \arr 0
\]
of locally free sheaves on $\stU_{1,1,2}$. Note that the exact sequence on $\E$ satisfies base change for $\pi_*$.
Set $\shN=\Coker(\pi_*\alpha)$. This is an invertible sheaf and applying the determinant we see that it coincides with the
invertible sheaf in the statement. The section $s\in \pi_*\shL^2$ induces a section $t\in \shN$ and we claim that its zero locus
is exactly $\stZ_0$. This will conclude the proof.
Let $\chi=(E\arrdi f S,\shQ,s)\in \stU_{1,1,2}$. We will denote by $-_\chi$ the base change along the corresponding map $S\arr\stU_{1,1,2}$.
For instance $\shL_\chi=\shQ$, $f=\pi_\chi$ and, with abuse of notation, $s_\chi=s$.
We have that $t_\chi=0$ if and only if $s_\chi\in \Imm(f_*\alpha_\chi)\subseteq f_*\shQ^2$ if and only if 
$s\in \Imm(\alpha_\chi)\subseteq \shQ^2$. So $t_\chi=0$ if and only if the square of the isomorphism
$\shQ\simeq f^*\shR_\chi\otimes \odi E(\sigma_\chi) $ sends $s$ to a section of the form $x\otimes 1$.
We want to show that those are exactly the objects of $\stZ_0$, that is $t_\chi=0$
if and only if $\chi\in \stZ_0$.

If $\chi\in \stZ_0$, we have $[\shQ]=[\odi E(\sigma_\chi)]=[\odi E(\tilde \tau)]$ in $\Picsh^1_{E/S}$, which implies $\sigma_\chi=\tilde \tau$
by \ref{elliptic curves and pic}. Moreover we have an isomorphism $\odi E(2\tilde \tau)\simeq \shQ^2$ sending $1$ to $s$.
We can conclude observing that all the isomorphisms $\odi E(2\tilde \tau)\arr \odi E(2\tilde \tau)\otimes f^*\shR^2$
send $1$ to a section of the form $1\otimes x$.

Assume now $t_\chi=0$, so that $s\in \shQ^2$ corresponds to a section $x\otimes 1\in f^*\shR_\chi^2 \otimes \odi E(2\sigma_\chi)$. If
$x\in f^*\shR_\chi^2$ does not generate this sheaf, then the zero locus of $s\in \shQ^2$ inside $E$ cannot be a cover of $S$, because
it will have non zero dimensional fibers, contradicting the fact that $\chi\in \stU_{1,1,2}$. So $f^*\shR_\chi^2\simeq \odi E$, and the
zero locus of $s$ in $E$ is the base change of $\shW\subseteq \E$, the zero locus of $1\in \odi \E(2\sigma)$. Taking into account
\ref{cover for one of two times a section}, this shows that
$\chi\in \stZ$. It also implies that $\tilde \tau=\sigma$, so that $[\shQ\otimes\odi E(-\tilde \tau)]=0$ in $\Picsh^0_{E/S}$. This exactly means
that $\chi\in \stZ_0$, as required.
\end{proof}

\section{Stacks of uniform cyclic covers and their Picard groups.}\label{sec: uniform covers}

In this section we work over a field of characteristic $p\geq 0$ and we fix a non negative integer $g$ and a positive integer $n$ with $n\geq 2$.

\begin{definit}\label{def: uniform cyclic covers}
 Let $Y$ be a scheme. A uniform cyclic cover of degree $n$ of $Y$ is a map $f\colon X\arr Y$ together with an action of $\mu_n$ on $X$ such that
 for all $q\in Y$ there exists an affine open neighborhood $U=\Spec R$ of $q$, an element $h\in R$ and a $\mu_n$-equivariant isomorphism of
  $U$-schemes $f^{-1}(U) \simeq \Spec R[x]/(x^n-h)$, where the right hand side is given the action for which $\deg x=1$.
  
 Uniform cyclic covers of degree $n$ form a stack that we denote by $\shU\shC_n$.
\end{definit}

Notice that uniform cyclic covers of degree $n$ are covers of degree $n$ and can be seen as a generalization of double covers when $p\neq 2$.
The definition of uniform cyclic covers in \cite{Arsie2004} is slightly different from our, because in \ref{def: uniform cyclic covers} we do not require that $h$ is a non zero divisor. The reason is that this is automatic for uniform cyclic covers between schemes smooth on a common base and that, avoiding this restriction, uniform cyclic covers are stable by base change.

\begin{definit}
 Let $h$ be a natural number. We denote by $\stB_{h,g,n}$ the stack of triples $(D,C,f)$ where
 $D\arr S$ 
 is a genus $h$ curve, $C\arr S$ is a genus $g$ curve and $f\colon D\arr C$ is a uniform cyclic cover of degree $n$.
 
 We define the number $d(h,g,n)=2\frac{h+n(1-g)-1}{n(n-1)}$, so that $h=1+n(g-1)+\frac{n(n-1)}{2}d(h,g,n)$.
\end{definit}

The aim of this section is to describe $\stB_{h,g,n}$ and compute its Picard group, at least for $h \gg g$. We start by describing explicitly uniform cyclic covers.

\begin{remark}\label{description of uniform cyclic covers}
 Let $\stY_n$ be the stack parametrizing pairs $(\shL,s)$ where $\shL$ is an invertible sheaf and $s\in \shL^n$. There is
 an equivalence $\stY_n\arr \stU\stC_n$ that maps $(\shL,s)\in \stY_n(S)$ to
 \begin{displaymath}
  X=\Spec \alA \arr S\text{ where }\alA=\odi S \oplus \shL^{-1} \oplus \cdots \oplus \shL^{-(n-1)}
 \end{displaymath}
where $\mu_n$ acts on $\alA$ via the given grading and the equivariant algebra structure on $\alA$ is obtained as follows:
given $0\leq u,v,z<n$ such that $z\equiv u+n \text{ mod }(n)$ the multiplication is
\begin{displaymath}
 (\shL^{-u}\otimes \shL^{-v}\arr \shL^{-z}) \simeq \begin{cases}
                                             \shL^{-u-v}\arrdi \id \shL^{-z}&\text{ if } u+v<n\\
                                             \shL^{-u-v}\simeq \shL^{-z}\otimes \shL^{-n} \arrdi{\id\otimes s} \shL^{-z}&\text{ if } u+v\geq n
                                            \end{cases}
\end{displaymath}
A quasi inverse $\Lambda\colon \stU\stC_n \arr \stY_n$ is obtained as follows. Given a uniform cyclic cover $f\colon X\arr S$ of degree $n$, the
group $\mu_n$ acts on $f_*\odi X$. The degree $1$ part of $f_*\odi X$ is an invertible sheaf on $S$ and we set $\shL$ for its dual. Since the
multiplication $f_*\odi X\otimes f_*\odi X\arr f_*\odi X$ is $\mu_n$-equivariant, we get a map from $\shL^{-n}$ to the degree $0$ part of
$f_*\odi X$, which is $\odi S$. This yields a section $s\in \shL^n$.
\end{remark}

\begin{prop}\label{relation between Bhg and Udtwo}
 Let $h$ be a natural number and set $d=d(h,g,n)$. If $d\notin \N$ then $\stB_{h,g,n}=\emptyset$. If $d\in \N$ the functor (see 
 \ref{description of uniform cyclic covers} for the notation)
   \[
  \begin{tikzpicture}[xscale=3.1,yscale=-0.6]
    \node (A0_0) at (0, 0) {$\stB_{h,g,n}$};
    \node (A0_1) at (1, 0) {$\stU_{d,g,n}$};
    \node (A1_0) at (0, 1) {$(D,C,f)$};
    \node (A1_1) at (1, 1) {$(C,\Lambda_C(f))$};
    \path (A0_0) edge [->]node [auto] {$\scriptstyle{\psi_{h,g,n}}$} (A0_1);
    \path (A1_0) edge [|->,gray]node [auto] {$\scriptstyle{}$} (A1_1);
  \end{tikzpicture}
  \]
is well defined and an open immersion. If $h>n(g-1)+1$ the image of $\psi_{h,g,n}$ is the complement of $\stZ_{d,g,n}$ in $\stU_{d,g,n}$, which is the \'etale locus of the canonical cover $\stH_{d,g,n}\arr \stU_{d,g,n}$. If $h=n(g-1)+1$ and $g\geq 1$ the image of $\psi_{h,g,n}$
is the substack of $\stU_{0,g,n}$ of triples $(C\arr S,\shQ,s)$ such that $\shQ,\dots,\shQ^{n-1}$ are not trivial on the geometric fibers of $C\arr S$.
\end{prop}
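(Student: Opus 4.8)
The plan is to exhibit $\psi_{h,g,n}$ as an equivalence onto an explicit open substack of $\stU_{d,g,n}$ by producing a quasi-inverse. Recall from \ref{description of uniform cyclic covers} the equivalence $\Lambda\colon\stU\stC_n\to\stY_n$, under which a uniform cyclic cover $f\colon D\to C$ corresponds to the pair $(\shL,s)$ with $D=\Spec(\odi C\oplus\shL^{-1}\oplus\cdots\oplus\shL^{-(n-1)})$ as a $C$-scheme and $s\in\shL^n$ the ``wrap-around'' section; so $\psi_{h,g,n}$ is nothing but $(D,C,f)\mapsto(C,\Lambda_C(f))$, and it is automatically faithful (indeed fully faithful once its essential image is pinned down, since $\Lambda$ is an equivalence and the $C$-scheme $D$ is reconstructed from $(C,\shL,s)$). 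I would begin with the numerical point: for $(D,C,f)\in\stB_{h,g,n}(S)$ and a geometric fibre $C_{\bar s}$, additivity of $\chi$ applied to $f_*\odi{D_{\bar s}}=\bigoplus_{i=0}^{n-1}(\shL^{-i})_{|C_{\bar s}}$ together with Riemann--Roch on the genus $g$ curve $C_{\bar s}$ gives $\chi(\odi{D_{\bar s}})=n(1-g)-\tfrac{n(n-1)}{2}\deg\shL$; since $D_{\bar s}$ is a connected genus $h$ curve this equals $1-h$, forcing $\deg\shL=d$. In particular $d\in\Z$; that in fact $d\ge 0$ and moreover that $s$ does not vanish identically on $C_{\bar s}$ both follow from reducedness of $D_{\bar s}$, since if $\deg\shL<0$ or $s_{|C_{\bar s}}=0$ then, as $n\ge 2$, the graded ideal $\bigoplus_{i\ge 1}(\shL^{-i})_{|C_{\bar s}}\subseteq\odi{D_{\bar s}}$ is nonzero and nilpotent. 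Testing on a geometric point, this proves the first assertion and shows $\psi_{h,g,n}$ lands in $\stU_{d,g,n}$.

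Conversely, for $(C,\shQ,s)\in\stU_{d,g,n}(S)$ I form $D=\Spec\alA$ over $C$ with $\alA=\odi C\oplus\shQ^{-1}\oplus\cdots\oplus\shQ^{-(n-1)}$ and the $\mu_n$-equivariant multiplication of \ref{description of uniform cyclic covers} built from $s$; this is a uniform cyclic cover of $C$, the construction is quasi-inverse to $\Lambda$, and it commutes with base change on $S$. The real content is to decide when $(D,C,f)$ lies in $\stB_{h,g,n}(S)$, i.e.\ when $D\to S$ is a genus $h$ curve. Properness and flatness are automatic ($D\to C$ is finite flat, $C\to S$ is a proper flat curve), so it suffices to check on geometric fibres; by the $\chi$-computation above the arithmetic genus of $D_{\bar s}$ is $h$ once $D_{\bar s}$ is connected, and $\Hl^0(D_{\bar s},\odi{})=\bigoplus_{i=0}^{n-1}\Hl^0(C_{\bar s},(\shQ^{-i})_{|C_{\bar s}})$ shows that $D_{\bar s}$ is connected automatically when $d>0$ (only $i=0$ contributes) and, when $d=0$, precisely when $\shQ,\dots,\shQ^{n-1}$ are non-trivial on $C_{\bar s}$ --- exactly the condition in the last clause of the statement. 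Thus everything reduces to smoothness of $D_{\bar s}$, which is local: étale-locally over $C_{\bar s}$ one has $D_{\bar s}\cong\Spec\odi{C_{\bar s},q}[x]/(x^n-\bar h)$ for $\bar h$ a local equation of $s$ at $q$.

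The main computation is this local one. Where $s(q)\ne 0$ the map $\Spec R[x]/(x^n-\bar h)\to\Spec R$ is a Kummer cover which, when $n$ is invertible, is étale over the base curve, so $D_{\bar s}$ is smooth there; where $s$ vanishes to order exactly one, writing $\bar h=u\cdot t$ with $u$ a unit and $t$ a uniformizer, the formal implicit function theorem applied to $F(t,x)=x^n-t\,u(t)$ --- whose $\partial/\partial t$ at the origin is the unit $-u(0)$ --- lets one solve $t$ as a power series in $x$, whence the completed local ring of $D_{\bar s}$ over $q$ is $\cong\bar k[[x]]$ and $D_{\bar s}$ is regular there; and where $s$ vanishes to order $\ge 2$ one sees from $\mathfrak m/\mathfrak m^2$ that the local ring is not regular. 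Hence for $d>0$ the fibre $D_{\bar s}$ is smooth exactly when the zero divisor $Z(s)$ on $C_{\bar s}$ is reduced, i.e.\ when the degree $nd$ divisor $Z(s)$ is étale over $\bar k$ --- equivalently when the corresponding point of $\stU_{d,g,n}$ lies off the discriminant locus $\stZ_{d,g,n}$ of $\stH_{d,g,n}\to\stU_{d,g,n}$ (see \ref{description of Udn and the cover Hdn}); for $d=0$ the nowhere-vanishing of $s$ forces $\shQ^n\simeq\odi C$ and $D$ is the $\mu_n$-torsor attached to $\shQ$, which is smooth. I expect this local smoothness analysis --- the simple-zero/Kummer dichotomy --- to be the only non-formal step.

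To finish: the two constructions are mutually quasi-inverse by \ref{description of uniform cyclic covers}, so $\psi_{h,g,n}$ is fully faithful with essential image the substack of $\stU_{d,g,n}$ just described, and that image is open. For $d>0$ it is $\stU_{d,g,n}\setminus\stZ_{d,g,n}$, and $\stZ_{d,g,n}$ is closed, being the zero locus of the discriminant section of $\stH_{d,g,n}\to\stU_{d,g,n}$ (which is also, tautologically, the non-étale locus of that cover, giving the last identification in the $h>n(g-1)+1$ case). For $d=0$ it is $\stU_{0,g,n}$ minus the $n-1$ closed substacks $\{[\shQ^i]=[\odi C]\}$, $1\le i\le n-1$, each closed since $\Picsh^0_{\shC/\stU_{0,g,n}}\to\stU_{0,g,n}$ is separated and the locus in question is where the section $(C,\shQ,s)\mapsto[\shQ^i]$ meets the zero section. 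Therefore $\psi_{h,g,n}$ is an open immersion with the stated image.
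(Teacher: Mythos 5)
Your proposal follows essentially the same route as the paper's own proof: a fibrewise numerical argument forcing $\deg\shL=d$ (you use additivity of $\chi$ along $f$ plus Riemann--Roch, the paper uses $\Hl^0(\shQ^{-i})=0$ plus Riemann--Roch --- equivalent), nonvanishing of $s$ on geometric fibres from smoothness/reducedness of $D$, full faithfulness via the equivalence of \ref{description of uniform cyclic covers}, reduction of the converse to geometric fibres, connectedness read off from $\Hl^0(f_*\odi D)$, the local analysis of $x^n=h$ at points of $Z(s)$ (your implicit-function-theorem computation is the same as the paper's DVR criterion in the direction $v_q(h)=1$), and the $\mu_n$-torsor discussion when $d=0$; your added remarks on openness of the image in the $d=0$ case are correct and merely make explicit what the paper leaves tacit.

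The one point to flag is the characteristic: away from $Z(s)$ you argue smoothness of $D$ via \'etaleness of the Kummer cover, explicitly ``when $n$ is invertible'', and in the $d=0$ case you assert that a $\mu_n$-torsor over a smooth curve is smooth, which also uses smoothness of $\mu_n$; so as written your argument establishes the description of the image only when $\car k\nmid n$. You should be aware, however, that the paper's proof has exactly the same blind spot: its criterion ``$R[x]/(x^n-h)$ is regular if and only if $h\notin m_R^2$'' is false when $p\mid n$ and $h$ is a unit (in characteristic $2$, $R[x]/(x^2-1-t^2)=R[y]/((y+t)^2)$ is non-reduced although $1+t^2\notin m_R^2$), and indeed in that case the identification of the image of $\psi_{h,g,n}$ with the complement of $\stZ_{d,g,n}$ breaks down (for instance in characteristic $2$ a smooth degree-$2$ uniform cyclic cover is purely inseparable, so $h=g$, and $\stB_{1,0,2}=\emptyset$ while $\stU_{2,0,2}-\stZ_{2,0,2}\neq\emptyset$). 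So your restriction to $\car k\nmid n$ is not a defect relative to the paper; modulo that caveat the proposal is correct and matches the paper's argument.
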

\begin{proof}
 Let $(D,C,f)\in \stB_{h,g,n}(k)$, where $k$ is an algebraically closed field,
 and set $(\shQ,s)=\Lambda_C(f)$, so that 
 $$
 f_*\odi D \simeq \odi C \oplus \shQ^{-1} \oplus \cdots \oplus \shQ^{-(n-1)}
 $$ 
 Since $D$ is integral and connected we get $\dim_k \Hl^0(\shQ^{-i})=0$ for $i=1,\dots,n-1$. By Riemann-Roch
 it follows that $\dim_k \Hl^1(\shQ^{-i})=i\deg \shQ + g-1$ and therefore
 \begin{displaymath}
  \dim_k \Hl^1(f_*\odi D)=h=n(g-1)+1+(\deg \shQ )n(n-1)/2
 \end{displaymath}
 In particular $\deg \shQ = \in \Z$.
 On the other hand $s$ cannot be zero since $D$ is smooth and therefore
 $\deg \shQ\geq 0$. 
 In conclusion we see that $\stB_{h,g,n}=\emptyset$ if $d\notin \N$, and that $\psi_{h,g,n}$ is well defined if $d\in \N$.
 
 From now on we assume $d\in \N$. From \ref{description of uniform cyclic covers} it follows that $\psi_{h,g,n}$ is fully faithful.
 
 Given $(C\arr S,\shQ,s)\in \stU_{d,g,n}$ we have to check under what conditions
 the total space of the unifor cyclic cover $D\arr C$ associated with $(\shQ,s)$ (see \ref{description of uniform cyclic covers}) is a smooth
 curve of genus $h$ over $S$.
 It is easy to see that everything follows from the case $S=\Spec k$, where $k$ is an algebraically closed field. 
 Assume $d>0$. We have $\Hl^0(\shQ^{-i})=0$ for $i>0$, which tells us that $D$ is connected and,
 by definition of $d$, that $\dim_k \Hl^1(\odi D)=h$. The result then follows because
 the scheme $D$ is
 regular if and only if the zero locus of $s\in \shQ^n$ is \'etale over $k$. This can be checked locally using that if $(R,m_R)$ is a DVR and $h\in R$
 then $R[x]/(x^n-h)$ is regular if and only if $h\notin m_R^2$.
 
 Now assume $d=0$. By \ref{the zero case and mu n} the map $s\colon \odi C \arr \shQ^n$ is an isomorphism.
 In particular $D\arr C$ is a $\mu_n$-torsor and therefore $D$ is smooth. Moreover $D$ is connected if and only if $\Hl^0(\shQ^{-i})=0$ for
 $i=1,\dots,n-1$, in which case has exactly genus $h$ by definition of $d$.
 Since $\shQ$ has degree $0$ we have that $\Hl^0(\shQ^{-i})\neq 0$ if and only if $\shQ^i\simeq \odi C$, which
 concludes the proof.
\end{proof}

\begin{prop}\label{Bhgr not empty if d notin N}
 Let $h$ be a natural number with $d=d(h,g,n)\in \N$. Then $\stB_{h,g,n}$ is a non empty algebraic stack of finite type and if $nd>2g-2$ or $p\nmid n$ then $\stB_{h,g,n}\arr \stM_g$ is smooth and surjective.
\end{prop}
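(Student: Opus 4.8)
The plan is to read everything off the open immersion $\psi_{h,g,n}\colon\stB_{h,g,n}\hookrightarrow\stU_{d,g,n}$ of \ref{relation between Bhg and Udtwo}. \emph{Algebraicity and finite type} are then immediate from \ref{from two to one} (the stack $\stU_{d,g,n}$ is algebraic and of finite type over $\stM_g$, hence over $k$) together with the fact that an open substack inherits these properties. It therefore remains to prove (a) $\stB_{h,g,n}\neq\emptyset$ and (b) that $\stB_{h,g,n}\arr\stM_g$ is smooth and surjective when $nd>2g-2$ or $p\nmid n$. I would argue (a) and the surjectivity in (b) fiberwise over geometric points of $\stM_g$, distinguishing $d\geq1$ from $d=0$; note that $d\in\N$ with $h\in\N$ forces $g\geq1$ when $d=0$, and that $d\geq1$ is equivalent to $h>n(g-1)+1$ since $\binom n2\geq1$.

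Suppose first $d\geq1$. By \ref{relation between Bhg and Udtwo} the image of $\psi_{h,g,n}$ is the étale locus $\stU_{d,g,n}\setminus\stZ_{d,g,n}$ of the canonical cover. I would exploit the flat surjective $\stM_g$-morphism $\Omega\colon\stU_{d,g,n}\arr\stU_{dn,g,1}$ of \ref{from two to one}, the identification $\stU_{dn,g,1}\simeq\Hilb^{dn}_{\stM_{g,1}/\stM_g}$ of \ref{Ud1 has Hilbert scheme}, and the equality $\Omega^{-1}(\stZ_{dn,g,1})=\stZ_{d,g,n}$. Given any genus $g$ curve $C$ over an algebraically closed field, one picks a reduced degree $dn$ divisor on $C$, obtaining a point of $\Hilb^{dn}_C\simeq\stU_{dn,g,1}$ lying outside $\stZ_{dn,g,1}$; since $\Omega$ is surjective over $\stM_g$ this point has a preimage in $\stU_{d,g,n}$, which lies over $C$ and, by the displayed equality, in $\stU_{d,g,n}\setminus\stZ_{d,g,n}=\psi_{h,g,n}(\stB_{h,g,n})$. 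This settles both non-emptiness and surjectivity in the range $d\geq1$.

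Now suppose $d=0$, so $g\geq1$ and, combining \ref{the zero case and mu n} with \ref{relation between Bhg and Udtwo}, the image of $\psi_{h,g,n}$ consists of triples $(C,\shQ,s)$ with $\deg\shQ=0$, $s$ an isomorphism $\odi C\arr\shQ^n$, and $\shQ,\dots,\shQ^{n-1}$ nontrivial on geometric fibers; concretely one needs, over a geometric point $C$, a line bundle of degree $0$ and \emph{exact} order $n$ in $\Pic C$. When $p\nmid n$ every genus $g\geq1$ curve carries one: $(\Picsh^0_C)[n]$ is finite étale over $\stM_g$ by \ref{multiplication by two on jacobians}, with geometric fibers abstractly $(\Z/n\Z)^{2g}$, and $2g\geq2$, so an element of order $n$ exists; this yields surjectivity onto $\stM_g$ in the $p\nmid n$ case. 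If $p\mid n$ only non-emptiness is required, and one chooses $C$ ordinary of genus $g$ (ordinary curves exist in every genus $\geq1$), whose geometric $n$-torsion is $(\Z/m\Z)^{2g}\times(\Z/p^a\Z)^g$ with $n=p^am$, $p\nmid m$, again containing an element of exact order $n$.

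Finally, for the \emph{smoothness} of $\stB_{h,g,n}\arr\stM_g$: being open in $\stU_{d,g,n}$, it suffices to show $\stU_{d,g,n}\arr\stM_g$ is smooth under each hypothesis. If $p\nmid n$ this is exactly part of \ref{from two to one}. If instead $nd>2g-2$ — a range in which necessarily $d\geq1$, since $d=0$ forces $g\geq1$ and hence $nd=0\leq2g-2$ — then by \ref{when Vdn is a vector bundle} $\stU_{d,g,n}$ is the complement of the zero section in a vector bundle over $\jac_{d,g}$, and $\jac_{d,g}\arr\stM_g$ is smooth by \ref{smoothness of the universal Jacobian} (a $\Gm$-gerbe followed by a representable smooth map), so $\stU_{d,g,n}\arr\stM_g$ is a composite of smooth maps. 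I expect the only genuinely non-formal point — the main obstacle — to be the unconditional non-emptiness in the $\mu_n$-torsor case $d=0$ with $p\mid n$: one cannot use a torsion point of an arbitrary Jacobian, and must instead invoke the existence of ordinary curves together with the structure of their $p$-divisible groups.
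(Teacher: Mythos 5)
Your proposal is correct and, for most of the statement, follows the paper's own route verbatim: algebraicity, finite type and smoothness are read off the open immersion $\psi_{h,g,n}$ of \ref{relation between Bhg and Udtwo} together with \ref{from two to one} and \ref{when Vdn is a vector bundle}/\ref{smoothness of the universal Jacobian}, and for $d\geq 1$ both you and the paper get non-emptiness and surjectivity by passing through $\Omega\colon \stU_{d,g,n}\arr\stU_{nd,g,1}\simeq\Hilb^{nd}_{\stM_{g,1}/\stM_g}$ and exhibiting a reduced divisor of degree $nd$ on any geometric fiber (the paper writes the point as $(C,\odi C(p_1+\dots+p_{nd}),1)$). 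The same holds for $d=0$, $p\nmid n$, where both arguments rest on \ref{multiplication by two on jacobians}. The one place where you genuinely diverge is the remaining case $d=0$, $p\mid n$, where only non-emptiness is at stake: the paper reduces to $n=q^l$ a prime power (elements of coprime orders multiply), invokes Faber's result (\cite[Theorem 2.3]{Faber2004}) that the \emph{general} curve carries a line bundle of order $p$, and then lifts it along the surjection $[p^{l-1}]$ of \ref{multiplication by two on jacobians} to get exact order $p^l$; you instead invoke the existence of an \emph{ordinary} curve of genus $g$ in characteristic $p$ and the structure of its torsion $(\Z/m\Z)^{2g}\times(\Z/p^a\Z)^g$. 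Your route is correct, but note that the input you use is strictly stronger (full $p$-rank rather than $p$-rank at least $1$) and is asserted without proof or reference — it is a standard but nontrivial fact (ordinariness is an open condition and the generic curve is ordinary, by degeneration to chains of ordinary elliptic curves, or see Faber--van der Geer), so in a written version you would need to cite it, just as the paper cites Faber for its weaker input; conversely, the paper's lifting trick shows one can get by with a single $p$-torsion class, which is the cheaper hypothesis.
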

\begin{proof}
Let $k$ be an algebraically closed field. We are going to prove that $\stB_{h,g,n}\neq \emptyset$ and, if $dn>2g-2$ or $p\nmid n$, that $\stB_{h,g,n}\arr \stM_g$ is surjective. All the other claims follow from \ref{when Vdn is a vector bundle}, \ref{from two to one} and \ref{relation between Bhg and Udtwo}

Assume $d>0$. By \ref{relation between Bhg and Udtwo} we have $\stB_{h,g,n}=\stU_{d,g,n}-\stZ_{d,g,n}$. Moreover by \ref{from two to one} there is a surjective map $\stU_{d,g,n}-\stZ_{d,g,n}\arr \stU_{nd,g,1}-\stZ_{nd,g,1}$. We can conclude that $\stB_{h,g,n}\arr \stM_g$ is surjective because if $C$ is a  genus $g$ curve over $k$ and $p_1,\dots,p_{nd}$ are distinct
 rational points then $(C,\odi C(p_1+\dots+p_{nd}),1)\in (\stU_{nd,g,1}-\stZ_{nd,g,1})(k)$.
 
 Assume $d=0$ and let $C$ be a genus $g$ curve over $k$. We have $g\geq 1$ because if $g=0$ then $h=1-n<0$. By \ref{relation between Bhg and Udtwo} the fiber of $\stB_{h,g,n}\arr \stM_g$ over $C\in \stM_g(k)$ is not empty if and only if $\Pic C$ has an element of order $n$. If $p\nmid n$ this is always the case thanks to \ref{multiplication by two on jacobians}. If $p\mid n$ we have to show that this holds when $C$ is general. We can assume $n=q^l$ for some prime $q$. If $q\neq p$ then $\Pic C$ has an element of order $n$ by \ref{multiplication by two on jacobians}. Assume $p=q$. By \cite[Theorem 2.3]{Faber2004} when $C$ is general there exists an invertible sheaf
 $\shT$ on $\shC$ of order $p$. Since $[p^{l-1}]\colon \Picsh^0_C \arr \Picsh^0_C$ is surjective by
 \ref{multiplication by two on jacobians}, there exists $\shQ\in \Pic C$ such that $\shQ^{p^{l-1}}\simeq \shT$. It is easy to check
 that $\shQ$ has order exactly $p^l=n$.
\end{proof}

The following result explains the relation between $\Pic \stB_{h,g,n}$ and $\Pic \jac_{d(h,g,n),g}$.

\begin{prop}\label{general prop for main theorem}
 Let $h$ be a natural number such that $d=d(h,g,n) \in \Z$ and $nd>2g-2$ and
 let $\pi\colon \shC \arr \jac_{d,g}$ be the universal curve 
 and $\shL$ be the universal invertible sheaf on $\shC$. Set also 
 $\shT=(\det\pi_*(\shL^n \otimes \omega_\pi))^2 \otimes (\det \pi_*\omega_\pi)^{-2}$.
 Then the map $\shB_{h,g,n}\arr \jac_{d,g}$ induces a surjective morphism
 \begin{displaymath}
  \gamma\colon (\Pic \jac_{d,g})/\langle \shT \rangle \arr \Pic \stB_{h,g,n}
 \end{displaymath}
 If $\stZ_{d,g,n}$ is integral then $\gamma$ is an isomorphism. If $h=n=2$, $g=1$ (so that $d=1$) and $p\neq 2$ then
 the kernel of $\gamma$ is generated by $(\pi_*\shL)^2\otimes (\pi_*\omega_\pi)^{-2}$.
 
 If $g=0$, with notations from \ref{picard jacobian genus zero}, then $\shT \simeq \shL_0^{2n(nd-1)}$ if $d$ is even, 
 $\shT\simeq \shL_0^{n(nd-1)}$ if $d$ is odd. If $g=1$ and $p\nmid d$ then $\shT\simeq (\det \pi_*\shL)^{2n^2}\otimes (\pi_*\omega_\pi)^{n(dn+d-2n)}$.
If $g\geq 2$ and $p=0$ then $\shT \simeq (\det \pi_*\omega_\pi)^{-2n^2} \otimes d_\pi(\shL)^{n(n-1)} \otimes (\det \pi_*(\shL\otimes \omega_\pi))^{n(n+1)}$.
\end{prop}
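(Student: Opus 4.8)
The plan is to factor the comparison through the intermediate stack $\stU_{d,g,n}$, whose Picard group is already under control, and then to read off the divisor $\stZ_{d,g,n}$. First I would note that, since $nd>2g-2$, Remark~\ref{when Vdn is a vector bundle} presents $\stU_{d,g,n}$ as the complement of the zero section of the rank $nd+1-g$ vector bundle $\stV_{d,g,n}$ over $\jac_{d,g}$, and that in the ranges allowed by the hypotheses (which force $d>0$, since $h\in\N$) one has $nd+1-g\geq 2$. As $\jac_{d,g}$ is smooth and integral (Remark~\ref{smoothness of the universal Jacobian}), so is $\stV_{d,g,n}$, and the first two bullets of Proposition~\ref{relations picard group smooth stacks} give isomorphisms $\Pic\jac_{d,g}\simeq\Pic\stV_{d,g,n}\simeq\Pic\stU_{d,g,n}$ induced by pullback. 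By Proposition~\ref{relation between Bhg and Udtwo} the stack $\stB_{h,g,n}$ is the open substack $\stU_{d,g,n}-\stZ_{d,g,n}$, nonempty by Proposition~\ref{Bhgr not empty if d notin N}, and by Theorem~\ref{key proposition for the bad locus}(\ref{key proposition for the bad locus: zero}) the closed substack $\stZ_{d,g,n}$ is the zero locus of a section of the pullback of $\shT$ to $\stU_{d,g,n}$, necessarily nonzero since $\stB_{h,g,n}\neq\emptyset$. Hence $\shT$ restricts trivially to $\stB_{h,g,n}$, so the composite $\Pic\jac_{d,g}\simeq\Pic\stU_{d,g,n}\arr\Pic\stB_{h,g,n}$ kills $\shT$ and factors through a map $\gamma$; the surjectivity of $\gamma$ follows from the surjectivity of $\Pic\stU_{d,g,n}\arr\Pic(\stU_{d,g,n}-\stZ_{d,g,n})$ established inside the proof of Proposition~\ref{relations picard group smooth stacks}.

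Next I would analyse the kernel. If $\stZ_{d,g,n}$ is integral, the third bullet of Proposition~\ref{relations picard group smooth stacks} applied to the single pair consisting of $\shT$ and its section shows that restriction induces an isomorphism $\Pic\stU_{d,g,n}/\langle\shT\rangle\simeq\Pic\stB_{h,g,n}$, i.e. $\gamma$ is an isomorphism. In the exceptional case $h=n=2$, $g=1$ (so $d=1$ and $nd=2g$) with $p\neq 2$, Theorem~\ref{key proposition for the bad locus}(\ref{key proposition for the bad locus: third}) gives $\stZ_{1,1,2}=\stZ_0\sqcup\stZ_1$ with both pieces integral, and since $\stZ_{1,1,2}$ is reduced by Theorem~\ref{key proposition for the bad locus}(\ref{key proposition for the bad locus: first}) each $\stZ_i$ is an integral Cartier divisor with $\shT\simeq\odi{}(\stZ_0)\otimes\odi{}(\stZ_1)$. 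Applying the third bullet of Proposition~\ref{relations picard group smooth stacks} with $r=2$ yields $\Pic\stB_{2,1,2}\simeq\Pic\jac_{1,1}/\langle\odi{}(\stZ_0),\odi{}(\stZ_1)\rangle$, so $\ker\gamma$ is the image in $\Pic\jac_{1,1}/\langle\shT\rangle$ of $\langle\odi{}(\stZ_0),\odi{}(\stZ_1)\rangle$, which (as $\odi{}(\stZ_1)\simeq\shT\otimes\odi{}(\stZ_0)^{-1}$) is generated by $\odi{}(\stZ_0)=\det(\pi_*\shL^2)\otimes(\pi_*\shL)^{-2}$. To recognize this as $(\pi_*\shL)^2\otimes(\pi_*\omega_\pi)^{-2}$ I would use that, $\shL$ having degree $1$ over a genus one curve, Lemma~\ref{cbs for elliptic curves} makes $\pi_*\shL$ invertible so $\det\pi_*\shL=\pi_*\shL$, together with Theorem~\ref{picard group of Xd} specialized to $d=1$, $n=2$, $k=0$, which gives $\det\pi_*\shL^2\simeq(\det\pi_*\shL)^4\otimes(\pi_*\omega_\pi)^{-2}$.

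Finally I would compute $\shT=(\det\pi_*(\shL^n\otimes\omega_\pi))^2\otimes(\det\pi_*\omega_\pi)^{-2}$ in each genus by substituting $k=1$ into the formula for $\det\pi_*(\shL^n\otimes\omega_\pi^k)$ coming from the relevant Picard computation: Proposition~\ref{picard jacobian genus zero} for $g=0$ (where $\det\pi_*\omega_\pi$ is trivial, $\omega_\pi$ having negative degree), Theorem~\ref{picard group of Xd} for $g=1$ under $p\nmid d$ (where $\det\pi_*\omega_\pi=\pi_*\omega_\pi$ by Lemma~\ref{cbs for elliptic curves}), and Theorem~\ref{picard universal jacobian big genus} for $g\geq 2$ in characteristic $0$; in each case this is a short exponent manipulation matching the stated expressions. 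Conceptually I do not expect a new difficulty in this proposition --- it is essentially an assembly of the earlier results. The genuinely delicate inputs are external, namely the structure theorems for $\stZ_{d,g,n}$ in Theorem~\ref{key proposition for the bad locus} (reducedness, irreducibility, and the precise splitting of $\stZ_{1,1,2}$), and internally the bookkeeping in the second step that pins down $\ker\gamma$ as a concrete line-bundle class rather than an abstract subgroup; that is the place where I would concentrate the care.
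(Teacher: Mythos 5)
Your proposal is correct and follows essentially the same route as the paper's own (very terse) proof: identify $\Pic \jac_{d,g}$ with $\Pic \stU_{d,g,n}$ via the rank $nd+1-g\geq 2$ vector bundle $\stV_{d,g,n}$, use Theorem \ref{key proposition for the bad locus} to realize $\stZ_{d,g,n}$ as the reduced (resp.\ integral, resp.\ split into two integral components) zero locus of a section of the pullback of $\shT$, and conclude with Proposition \ref{relations picard group smooth stacks} together with the determinant formulas of \ref{picard jacobian genus zero}, \ref{picard group of Xd} and \ref{picard universal jacobian big genus}. Your explicit treatment of the kernel in the case $h=n=2$, $g=1$ (writing $\shT\simeq\odi{}(\stZ_0)\otimes\odi{}(\stZ_1)$ and identifying $\odi{}(\stZ_0)$ with $(\pi_*\shL)^2\otimes(\pi_*\omega_\pi)^{-2}$ via Theorem \ref{picard group of Xd}) is exactly the argument the paper leaves as a one-line citation.
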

\begin{proof}
 Notice that we must have $d\geq 1$, otherwise $g=0$, $d=0$ and $h=1-n\leq -1$.
 The stack $\jac_{d,g}$ is smooth and irreducible by \ref{smoothness of the universal Jacobian}. Moreover
 $\stV_{d,g,n}\arr \jac_{d,g}$ is a vector bundle of rank $nd+1-g$ and $\stU_{d,g,n}$ is the complement of the zero section in $\stV_{d,g,n}$
 by \ref{when Vdn is a vector bundle}. Notice that $\rk \stV_{d,g,n}=nd+1-g\geq 2$ using that $nd \geq \max\{2,2g-1\}$.
 Thus $\stU_{d,g,n}$ is smooth and integral and, by \ref{relations picard group smooth stacks}, we can conclude that the map $\stU_{d,g,n}\arr \jac_{d,g}$ induces an isomorphism on
 Picard groups. By \ref{relation between Bhg and Udtwo} and \ref{Bhgr not empty if d notin N} we have $\stB_{h,g,n}=\stU_{d,g,n}-\stZ_{d,g,n}\neq \emptyset$, while by \ref{relations picard group smooth stacks} and \ref{key proposition for the bad locus} the map $\gamma$ is well defined, surjective and, if $\stZ_{d,g,n}$ is integral, an isomorphism. The claim about the case $h=n=2$, $g=1$ and $p\neq 2$ follows again by \ref{relations picard group smooth stacks} and \ref{key proposition for the bad locus}.
 
 The expressions for $\shT$ in the last part
 of the statement follow by a direct computation from \ref{picard jacobian genus zero}, \ref{picard group of Xd} and
 \ref{picard universal jacobian big genus} respectively.
\end{proof}

\begin{proof}[(Proof of Theorem \ref{main theorem}, except the case of $\stB_{1,1,2}$)]
The first part of the statement follows from \ref{relation between Bhg and Udtwo} and \ref{Bhgr not empty if d notin N}.

 By construction $\shL$ is the universal invertible sheaf on $\shC$ with respect to the map $\stB_{h,g,n}\arr \jac_{d,g}$.
 We first consider the case $g=1$ and $h=n=2$. By \ref{picard group of Xd} and \ref{general prop for main theorem} the group $\Pic \stB_{2,1,2}$ is generated by $\alpha=\pi_*\shL$ and $\beta=\pi_*\omega_\pi$ with relations $8\alpha=2\beta$, $2\alpha=2\beta$
 and $12\beta=0$. Those relations are equivalent to $6\beta=0$ and $2\alpha=2\beta$, which yields $\Pic \stB_{2,1,2}\simeq \Z/6\Z \times \Z/2\Z$.
 
 In all the other cases we have that $nd>2g-2$ and that $\stZ_{d,g,n}$ is integral by \ref{key proposition for the bad locus}. In particular the map $\gamma$ defined in \ref{general prop for main theorem} is an isomorphism.
 Using \ref{general prop for main theorem} the description of $\Pic \stB_{h,g,n}$ with generators and relations follows from \ref{picard jacobian genus zero} for $g=0$, \ref{picard group of Xd} for $g=1$ and \ref{picard universal jacobian big genus} for $g\geq 2$.
 
 We now deal with the description as abstract groups. For $g=0$ the result is clear.
 
 Consider now the case  (\ref{main theorem genus one big h}), that is $g=1$ and $nd > 2$. Set $A=n(dn+d-2n)$ and notice that $2\mid A$.
 The group $H=\Pic \stB_{h,1}$ is isomorphic to $\Z^2/\langle (0,12),(2n^2,A)\rangle$. The element $(0,4)$ has order $3$ in $H$.
 A direct check shows that the map $\phi\colon H\arr \Z/3\Z$ given by $\phi(0,1)=1$ and $\phi(1,0)=x$, where $x=0$ if $3\mid n$ and $x=A/n^2$ otherwise,
 is well defined because $3\mid n$ implies $3\mid A$. Since $\phi(0,4)=1$ we obtain $H\simeq \Z/3\Z \times G$ where $G=H/\langle (0,4) \rangle$.
 We have $G=\Z^2/\langle (0,4),(2n^2,A)\rangle$. If $4\mid A$ then $G\simeq \Z/4\Z\times \Z/2n^2\Z$. So assume $A\equiv 2 \text{ mod } (4)$.
 The map $\psi\colon G\arr \Z/2\Z$ given by $\psi(u,v)=v$ is well defined. Moreover $(n^2,1)$ has order $2$ in $G$ and $\psi(n^2,1)=1$.
 We obtain $G=\Z/2\Z\times \Z^2/\langle(0,4),(n^2,1)\rangle$. It is now easy to check that the last factor is cyclic of order $4n^2$.
 
 Consider now the case (\ref{main theorem genus big}) and set $x=(1,0,0)\in \Z^3$. Then $\Pic \stB_{h,g,n}$ is isomorphic to the group  $H$  quotient of $\Z^3$
 by the relations $(-2n^2,n(n+1),n(n-1))$ and, if $g=2$, $10x$. Set $l=10$ if $g=2$ and $l=0$ otherwise. It is easy to see that
 $\langle x \rangle \simeq \Z/l\Z$. A direct computation shows that the map $\psi \colon H\arr \Z/l\Z$ given by $\psi(u,v,z)=u+v+z$
 is well defined. Since $\psi(x)=1$ we can conclude that $H \simeq \Z/l\Z \times G$, where $G=H/x \simeq \Z^2/\langle (n(n+1),n(n-1))\rangle$.
 Set $m$ for the great common divisor of $n(n+1)$ and $n(n-1)$. An easy computation shows that $m=n$ if $n$ is even and $m=2n$ is $n$ is odd.
 Let $\alpha,\beta\in \Z$ such that $\alpha n(n+1)+\beta n(n-1)=m$. Consider the map
 \[ \phi=\left( \begin{array}{cc}
\alpha & \beta \\
-\frac{n(n-1)}{m} & \frac{n(n+1)}{m}
\end{array} \right)\colon \Z^2 \arr \Z^2
\]
 By construction $\phi$ is an isomorphism because $\det \phi=1$. Moreover $\phi(n(n+1),n(n-1))=(m,0)$ and therefore
 $G\simeq \Z^2/\langle (m,0) \rangle \simeq \Z/m\Z\times \Z$ as required.
\end{proof}

In the remaining part of this section we will deal with the case of $\stB_{1,1,2}$.
As pointed out at the beginning, this case is
peculiar and needs a variation of the methods used for higher genera. Nevertheless, the steps in the computation of $\Pic \stB_{1,1,2}$ are very similar
to the ones in the computation of $\Pic \stB_{h,1,n}$, for $h\gg 0$.

In what follows we consider $g=1$ and assume that $p\nmid 6$.
We denote by $\widetilde \stM_{1,2}$ the universal curve over $\stM_{1,1}$, 
which is the moduli stack of triples $(E,\sigma_1,\sigma_2)$ where $E$ is 
a genus one curve and $\sigma_1$, $\sigma_2$ are sections. The map $\widetilde \stM_{1,2}\arr\stM_{1,1}$ is the functor that forgets the second section.

\begin{prop}\label{fundamental map genus one}
 The functor
  \[
  \begin{tikzpicture}[xscale=3.0,yscale=-0.6]
    \node (A0_0) at (0, 0) {$\widetilde \stM_{1,2}$};
    \node (A0_1) at (1, 0) {$\jac_{0,1}$};
    \node (A1_0) at (0, 1) {$(E,\sigma_1,\sigma_2)$};
    \node (A1_1) at (1, 1) {$(E,\odi E(\sigma_2-\sigma_1))$};
    \path (A0_0) edge [->]node [auto] {$\scriptstyle{}$} (A0_1);
    \path (A1_0) edge [|->,gray]node [auto] {$\scriptstyle{}$} (A1_1);
  \end{tikzpicture}
  \]
is an epimorphism in the fppf topology and it is a section of the functor
  \[
  \begin{tikzpicture}[xscale=3.0,yscale=-0.6]
    \node (A0_0) at (0, 0) {$\jac_{0,1}$};
    \node (A0_1) at (1, 0) {$\widetilde \stM_{1,2}$};
    \node (A1_0) at (0, 1) {$(E,\shQ)$};
    \node (A1_1) at (1, 1) {$(\Picsh^0_{E/S},[\odi E],[\shQ])$};
    \path (A0_0) edge [->]node [auto] {$\scriptstyle{}$} (A0_1);
    \path (A1_0) edge [|->,gray]node [auto] {$\scriptstyle{}$} (A1_1);
  \end{tikzpicture}
  \]
\end{prop}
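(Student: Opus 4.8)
The plan is to follow closely the proof of Proposition~\ref{fundamental map} in the case $g=1$, $d=0$, the only change being that for $d=0$ the sheaf $\odi E(d\sigma)$ carries no information and is replaced by $\odi E(\sigma_2-\sigma_1)$; the essential input is again Lemma~\ref{elliptic curves and pic}, in the form of the isomorphism $E\simeq\Picsh^0_{E/S}$ determined by a section. First I would check that both functors are well defined: for $(E\arrdi\pi S,\shQ)\in\jac_{0,1}(S)$ the scheme $\Picsh^0_{E/S}\arr S$ is again a genus one curve, it carries the canonical section $[\odi E]$, and since $\shQ$ has relative degree $0$ its class defines a section $[\shQ]\in\Picsh^0_{E/S}(S)$, so that $(\Picsh^0_{E/S},[\odi E],[\shQ])$ is a legitimate object of $\widetilde\stM_{1,2}(S)$; well-definedness of the first functor is immediate.

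For the ``section of'' assertion I would take $(E,\sigma_1,\sigma_2)\in\widetilde\stM_{1,2}(S)$; composing the two functors yields $(\Picsh^0_{E/S},[\odi E],[\odi E(\sigma_2-\sigma_1)])$, and by Lemma~\ref{elliptic curves and pic} the isomorphism $E\arr\Picsh^0_{E/S}$, $\delta\mapsto[\odi E(\delta-\sigma_1)]$, sends $\sigma_1$ to $[\odi E]$ and $\sigma_2$ to $[\odi E(\sigma_2-\sigma_1)]$. Since this identification is natural in $S$, the composite functor is equivalent to the identity of $\widetilde\stM_{1,2}$.

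For the fppf epimorphism statement, given $(E\arrdi\pi S,\shQ)\in\jac_{0,1}(S)$ the map $\pi$ is smooth and surjective, hence admits a section after an \'etale (in particular fppf) base change; so we may assume $E$ has a section $\sigma_1$. By Lemma~\ref{elliptic curves and pic} the class $[\shQ]\in\Picsh^0_{E/S}(S)$ equals $[\odi E(\sigma_2-\sigma_1)]$ for a unique section $\sigma_2$ of $E$; then $\shQ$ and $\odi E(\sigma_2-\sigma_1)$ have the same class in $\Picsh^0_{E/S}$, so by Lemma~\ref{cbs for elliptic curves} they differ by $\pi^*\shR$ for an invertible sheaf $\shR$ on $S$, and trivializing $\shR$ fppf (in fact Zariski) locally gives $\odi E(\sigma_2-\sigma_1)\simeq\shQ$. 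Hence $(E,\shQ)$ lies fppf locally in the image.

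I do not expect a genuine obstacle: the argument is essentially a transcription of Proposition~\ref{fundamental map}. The only points requiring care are the well-definedness of the second functor and the usual bookkeeping around the difference between an invertible sheaf on $E$ and its class in $\Picsh^0_{E/S}$, which is precisely what forces the localization to be fppf rather than global.
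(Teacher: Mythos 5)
Your proposal is correct and follows essentially the same route as the paper: the paper also deduces the ``section of'' claim directly from Lemma \ref{elliptic curves and pic}, and for the epimorphism assumes fppf locally that $E$ has a section $\sigma_1$, finds $\sigma_2$ with $[\shQ]=[\odi E(\sigma_2-\sigma_1)]$ via the same lemma, and concludes that $\shQ$ and $\odi E(\sigma_2-\sigma_1)$ are fppf locally isomorphic. Your extra details (passing to an \'etale cover to get $\sigma_1$, and invoking \ref{cbs for elliptic curves} to trivialize the twist $\pi^*\shR$) are exactly the steps the paper leaves implicit.
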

\begin{proof}
 The second part of the statement follows from \ref{elliptic curves and pic}. For the first one let $(E,\shQ)\in \jac_{0,1}$. We can assume 
that $E$ has a section $\sigma_1$. Again by \ref{elliptic curves and pic}, $[\shQ]=[\odi E(\sigma_2-\sigma_1)]$, for some section $\sigma_2$ of $E$,
which means that
$\shQ$ and $\odi E(\sigma_2-\sigma_1)$ are fppf locally isomorphic.
\end{proof}

\begin{definit}
We define the group functor $G_0$ over $\widetilde \stM_{1,2}$ as the group $G$ obtained as in
\ref{torsors over a stack} with respects to the maps defined in \ref{fundamental map genus one}.
\end{definit}

\begin{prop}\label{properties of Gzero}
An element of $G_0(S\arrdi{(E,\sigma_1,\sigma_2)}\widetilde \stM_{1,2})$ is a pair $(f,\lambda)$ 
where $f\colon (E,\sigma_1)\arr (E,\sigma_1)$ is a translation and 
$\lambda\colon \odi E(\sigma_2-\sigma_1)\arr \odi E(f(\sigma_2)-f(\sigma_1))$ is an isomorphism. Moreover we have an exact sequence
  \[
  \begin{tikzpicture}[xscale=1.5,yscale=-0.6]
    \node (A0_2) at (2, 0) {$(f,\lambda)$};
    \node (A0_3) at (3, 0) {$f(\sigma_1)$};
    \node (A1_0) at (0, 1) {$0$};
    \node (A1_1) at (1, 1) {$\Gm$};
    \node (A1_2) at (2, 1) {$G_0$};
    \node (A1_3) at (3, 1) {$\E$};
    \node (A1_4) at (4, 1) {$0$};
    \node (A2_1) at (1, 2) {$\mu$};
    \node (A2_2) at (2, 2) {$(\id,\mu)$};
    \path (A2_1) edge [|->,gray]node [auto] {$\scriptstyle{}$} (A2_2);
    \path (A1_0) edge [->]node [auto] {$\scriptstyle{}$} (A1_1);
    \path (A1_1) edge [->]node [auto] {$\scriptstyle{}$} (A1_2);
    \path (A1_2) edge [->]node [auto] {$\scriptstyle{}$} (A1_3);
    \path (A0_2) edge [|->,gray]node [auto] {$\scriptstyle{}$} (A0_3);
    \path (A1_3) edge [->]node [auto] {$\scriptstyle{}$} (A1_4);
  \end{tikzpicture}
  \]
in the Zariski topology of $\Sch/\widetilde \stM_{1,2}$, 
where $\E\arr \widetilde \stM_{1,2}$ is the universal curve. In particular $G_0$ is smooth over $\widetilde \stM_{1,2}$.
\end{prop}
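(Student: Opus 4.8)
The plan is to imitate, almost verbatim, the proof of Proposition \ref{properties of Gd}, the only genuine difference being that the line bundle $\odi E(\sigma_2-\sigma_1)$ now has degree zero, which is exactly what removes the torsion constraint appearing there. First I would unwind the definition of \ref{torsors over a stack} applied to the maps of \ref{fundamental map genus one}: an element of $G_0$ over an $S$-point $(E,\sigma_1,\sigma_2)$ of $\widetilde\stM_{1,2}$ is an automorphism $(f,\lambda)$ of $(E,\odi E(\sigma_2-\sigma_1))$ in $\jac_{0,1}$ — that is, an isomorphism $f\colon E\to E$ together with an isomorphism $\lambda\colon\odi E(\sigma_2-\sigma_1)\to\odi E(f(\sigma_2)-f(\sigma_1))$ — whose image in $\Aut_{\widetilde\stM_{1,2}}(\Picsh^0_{E/S},[\odi E],[\odi E(\sigma_2-\sigma_1)])$ is the identity. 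Since that image only records the induced automorphism $f_*\colon\Picsh^0_{E/S}\to\Picsh^0_{E/S}$, the condition is precisely $f_*=\id$ on $\Picsh^0_{E/S}$; identifying $E$ with $\Picsh^0_{E/S}$ by means of the section $\sigma_1$ as in \ref{elliptic curves and pic}, it reads $[\odi{E_T}(f(\delta)-f(\sigma_1))]=[\odi{E_T}(\delta-\sigma_1)]$ for all $S$-schemes $T$ and all $\delta\in E(T)$, which, exactly as in \ref{properties of Gd}, forces $f$ to be a translation of $E$; conversely every translation $t$ satisfies $t_*=\id$ on $\Picsh^0_{E/S}$. This proves the first sentence and shows that $G_0\to\E$, $(f,\lambda)\mapsto f(\sigma_1)$, is a well-defined homomorphism to the universal curve $\E$, the latter taken as a group scheme with origin $\sigma_1$.

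Next I would check exactness of the displayed sequence in the Zariski topology. The map $\Gm\to G_0$, $\mu\mapsto(\id,\mu)$, is well defined because $\id$ is a translation and $\Aut(\odi E(\sigma_2-\sigma_1))$, computed fibrewise and globalized via $\pi_*\mathbb{G}_{m,E}\simeq\mathbb{G}_{m,S}$, is $\Gm$; for the same reason it is injective, and its image is exactly the kernel of $G_0\to\E$, since $(f,\lambda)$ lies in that kernel if and only if the translation $f$ fixes $\sigma_1$, i.e. $f=\id$. It then remains to see that $G_0\to\E$ is a Zariski epimorphism: given a section $\delta$ of $E$ over $S$, let $t$ be the translation with $t(\sigma_1)=\delta$; the divisors $\sigma_2-\sigma_1$ and $t(\sigma_2)-t(\sigma_1)$ have the same class in $\Picsh^0_{E/S}$ because translations act trivially on degree-zero classes, so by \ref{cbs for elliptic curves} the sheaves $\odi E(t(\sigma_2)-t(\sigma_1))$ and $\odi E(\sigma_2-\sigma_1)$ differ by the pullback of an invertible sheaf on $S$, and over the (Zariski) locus where that sheaf is trivial we obtain an isomorphism $\lambda$; the pair $(t,\lambda)$ then lies in $G_0$ over $\delta$. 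This is precisely where the hypothesis $d=0$ enters: the analogous step in \ref{properties of Gd} only produced preimages of the $d$-torsion, whereas here no constraint on $\delta$ survives, so the image is all of $\E$.

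Finally, the exact sequence exhibits $G_0$ Zariski-locally on $\widetilde\stM_{1,2}$ as a $\Gm$-torsor over the universal curve $\E$; since $\E\to\widetilde\stM_{1,2}$ is smooth and $\Gm$ is smooth over the base, $G_0\to\widetilde\stM_{1,2}$ is smooth, which is the last assertion. The only steps that require actual care are the translation of the kernel condition into the statement that $f$ is a translation (via \ref{elliptic curves and pic}) and the verification that the isomorphism $\lambda$ always exists locally on the base; everything else is formal once Proposition \ref{properties of Gd} and Lemma \ref{cbs for elliptic curves} are in hand, and I do not expect a serious obstacle.
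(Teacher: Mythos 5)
Your proposal is correct and follows essentially the same route as the paper's proof: unwind the definition of $G_0$ via \ref{torsors over a stack} and \ref{fundamental map genus one}, deduce from $f_*=\id$ (as in \ref{properties of Gd}) that $f$ is a translation, prove Zariski-local surjectivity onto $\E$ using \ref{cbs for elliptic curves} since the relevant classes in $\Picsh^0_{E/S}$ agree, and conclude smoothness from the local product structure of $G_0$ over $\E$. The only remark is a cosmetic one: the local triviality is Zariski-local on (sections of) $\E$, exhibiting $G_0\arr\E$ as a $\Gm$-torsor, which is exactly how the paper phrases it.
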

\begin{proof}
 By definition, an element of $G_0(S\arrdi{(E,\sigma_1,\sigma_2)}\widetilde \stM_{1,2})$ is a pair $(f,\lambda)$ where $f\colon E\arr E$ is an isomorphism
such that $f_*\colon \Picsh^0_{E/S}\arr \Picsh^0_{E/S}$ is the identity and 
$\lambda\colon \odi E(\sigma_2-\sigma_1)\arr \odi E(f(\sigma_2)-f(\sigma_1))$ is an isomorphism. 
As in the proof of \ref{properties of Gd}, $f_*=\id$ means that $f$ is a translation.
In particular the sequence in the statement is well defined and, since $\pi_*\mathbb{G}_{m,E}\simeq \mathbb{G}_m$, it is exact in the first two terms.

It remains to prove that $G_0\arr \E$ is a Zariski epimorphism. This will also imply that $G_0$ is locally a product
of $\Gm$ and $\E$ and therefore smooth. Let $(E,\sigma_1,\sigma_2,\delta) \in \E$ and $t$ be the translation by $\delta$,
so that $t(\sigma_1)=\delta$.
Since $t$ is a translation
we have
\[
[\odi E(\sigma_2-\sigma_1)]= [\odi E(t(\sigma_2)-t(\sigma_1))]\text{ in }\Picsh^0_{E/S}
\]
which means that the sheaves differ from an invertible sheaf coming from the base thanks to
\ref{cbs for elliptic curves}. So Zariski locally we get an isomorphism
$\lambda\colon \odi E(\sigma_2-\sigma_1)\arr\odi E(t(\sigma_2)-t(\sigma_1))$ and therefore $(t,\lambda)\in G_0(E,\sigma_1.\sigma_2))$ is mapped to
$(E,\sigma_1,\sigma_2,\delta)$.
\end{proof}

\begin{lemma}\label{gm-torsors and picard}
 Let $\stX$ be a smooth algebraic stack, $\stY\arrdi p\stX$ be a $\Gm$-torsor and $\shL$ be the invertible sheaf over $\stX$ corresponding
 to it. Then we have an exact sequence
 \[
 \Z\arrdi \shL \Pic \stX\arrdi{p^*} \Pic \stY\arr 0
 \]
\end{lemma}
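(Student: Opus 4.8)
The plan is to realize $\stY$ as an open substack of the total space of $\shL$ and to reduce the statement to Proposition~\ref{relations picard group smooth stacks}. I would write $q\colon L\arr\stX$ for the line bundle $L=\Specsh_\stX\bigl(\Sym(\duale\shL)\bigr)$, which is smooth (being a vector bundle over the smooth stack $\stX$) and which we may assume integral, since $\stX$ is integral in all the applications. It carries a tautological section $s$ of $q^*\shL$: after trivializing $\shL$, so that $L$ becomes $\A^1_\stX$, the section $s$ is the coordinate function. Its zero locus $Z(s)$ is the zero section $\stX\hookrightarrow L$, an effective Cartier divisor isomorphic to $\stX$ (hence integral), and by construction $\odi L(Z(s))\simeq q^*\shL$. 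On the other hand the complement $L-Z(s)$, with the residual $\Gm$-action, is the $\Gm$-torsor over $\stX$ with associated invertible sheaf $\shL$, so it is canonically identified over $\stX$ with $p\colon\stY\arr\stX$. (The opposite sign convention replaces $\shL$ by $\shL^{-1}$ throughout, which affects neither the subgroup $\langle[\shL]\rangle$ nor the statement.)

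Granting this, the first step is to apply the first item of Proposition~\ref{relations picard group smooth stacks} to the vector bundle $q\colon L\arr\stX$: the pullback $q^*\colon\Pic\stX\arr\Pic L$ is an isomorphism carrying $[\shL]$ to $[q^*\shL]$. The second step is to apply the third item of the same Proposition to $L$ equipped with the single pair $(q^*\shL,s)$ — which is permitted since $Z(s)$ is integral — obtaining that restriction induces an isomorphism $\Pic L/\langle q^*\shL\rangle\xrightarrow{\,\sim\,}\Pic(L-Z(s))=\Pic\stY$. Composing the two, and noting that the composite $\Pic\stX\arrdi{q^*}\Pic L\arr\Pic\stY$ is exactly $p^*$ because $p$ is the restriction of $q$ to $\stY=L-Z(s)$, I would conclude that $p^*$ is surjective with kernel the cyclic subgroup generated by $[\shL]$. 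This is precisely the exactness of the sequence $\Z\arrdi\shL\Pic\stX\arrdi{p^*}\Pic\stY\arr0$.

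The inputs to be checked are all routine, and I would only sketch them: that a $\Gm$-torsor is the complement of the zero section in the total space of its associated line bundle; that the tautological section cuts out that zero section with multiplicity one; and the identification $\odi L(Z(s))\simeq q^*\shL$. The only mild subtlety is bookkeeping: Proposition~\ref{relations picard group smooth stacks} is stated for smooth integral stacks, whence the reduction to integral $\stX$ above — and, concretely, $[\shL]$ lies in $\ker p^*$ from the outset because $s$ is nowhere vanishing on $\stY$, hence trivializes $q^*\shL|_\stY=p^*\shL$. I do not expect any genuine difficulty beyond this.
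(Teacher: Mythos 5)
Your proof is correct and follows essentially the same route as the paper: the paper also identifies $\stY$ with the complement of the zero section in the total space of $\shL$ (viewed as $\Homsh_\stX(\odi\stX,\shL)$), observes that this zero section is the integral zero locus of the universal section of $q^*\shL$, and concludes by Proposition \ref{relations picard group smooth stacks}. Your remark about integrality of $\stX$ matches an implicit assumption the paper itself makes in invoking that proposition.
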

\begin{proof}
 The stack $\stY$ and the line bundle $\stY'\arr\stX$ corresponding to $\shL$ can be seen as the relative sheaves 
 \begin{displaymath}
  \Isosh_\stX(\odi\stX,\shL) \text{ and } \Homsh_\stX(\odi\stX,\shL)
 \end{displaymath}
on $\Sch/\stX$ respectively. Denote by $q\colon \stY'\arr\stX$ the structural morphism.
  The stack $\stY$ is the open substack of $\stY'$ whose complement $\stZ$ is the zero section of $\stY'\arr\stX$.
 The stack $\stZ$ is integral since $\stX$ is so and it is the zero locus of the universal section of $q^*\shL$. The result then follows from
 \ref{relations picard group smooth stacks}.
\end{proof}

In what follows we denote by $\stF$ the complement of the zero section $\stM_{1,1}\arr \widetilde \stM_{1,2}$ in $\widetilde \stM_{1,2}[2]$.

\begin{prop}\label{picard of F is picard of Bone}
 The composition $\stB_{1,1,2}\arr \jac_{0,1} \arr \widetilde{\stM}_{1,2}$ has image in $\stF$. The induced map $\stB_{1,1,2}\arr \stF$
 yields an isomorphism on Picard groups and factors as a $\Gm$-torsor $\stB_{1,1,2}\arr \Bi_\stF G_0$ followed by the projection
 $\Bi_\stF G_0 \arr \stF$.
\end{prop}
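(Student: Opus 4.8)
The plan is to reduce the assertion to the structure results for the genus‑one universal Jacobian obtained above, and then to a Picard computation on the $G_0$‑gerbe $\Bi_\stF G_0$. First I would check the assertion about the image. Here $h=g=1$ and $n=2$, so $d=0$ and $h=n(g-1)+1$; by \ref{relation between Bhg and Udtwo} the functor $\psi_{1,1,2}$ identifies $\stB_{1,1,2}$ with the open substack of $\stU_{0,1,2}$ of triples $(C\arr S,\shQ,s)$ for which $\shQ$ is not trivial on any geometric fibre. The composition in the statement carries such a triple to $(C,\shQ)\in\jac_{0,1}$ and then to $(\Picsh^0_{C/S},[\odi C],[\shQ])\in\widetilde\stM_{1,2}$. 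By \ref{the zero case and mu n} the section $s$ gives an isomorphism $\odi C\simeq\shQ^2$, so $[\shQ]$ is $2$‑torsion, and it is nonzero on every geometric fibre by the previous sentence; hence the composition factors through the complement of the zero section in $\widetilde\stM_{1,2}[2]$, that is through $\stF$.

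Next I would exhibit the factorization. Combining \ref{fundamental map genus one}, \ref{torsors over a stack} and the definition of $G_0$ gives $\jac_{0,1}\simeq\Bi_{\widetilde\stM_{1,2}}G_0$; base change along $\stF\hookrightarrow\widetilde\stM_{1,2}$ yields $\jac_{0,1}\times_{\widetilde\stM_{1,2}}\stF\simeq\Bi_\stF G_0$, and by the previous paragraph $\stB_{1,1,2}\arr\jac_{0,1}$ factors through this fibre product. Let $\pi\colon\shC\arr\jac_{0,1}$ be the universal curve and $\shL$ the universal sheaf. By \ref{the zero case and mu n} the section $s$ trivializes $\shQ^2$, so $\stU_{0,1,2}\arr\jac_{0,1}$ has image in the substack $\jac_{0,1}^{[2]}$ where $\shL$ is $2$‑torsion on the fibres; there $[\shL^2]=0$ in $\Picsh^0$, so by \ref{cbs for elliptic curves} the sheaf $\shN:=\pi_*\shL^2$ is invertible and $\pi^*\shN\simeq\shL^2$. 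Unwinding the definition of $\stU_{0,1,2}$ and using \ref{the zero case and mu n}, the datum $s\in\shQ^2$ is the same as a nowhere‑vanishing section of the pullback of $\shN$, so $\stU_{0,1,2}\arr\jac_{0,1}^{[2]}$ is the $\Gm$‑torsor of frames of $\shN$. Restricting to the open locus where $\shL$ is nontrivial on fibres — which over $\widetilde\stM_{1,2}[2]$ is precisely the part lying over $\stF$ — shows that $\stB_{1,1,2}\arr\Bi_\stF G_0$ is the $\Gm$‑torsor of frames of $\shN|_{\Bi_\stF G_0}$, followed by the projection $\Bi_\stF G_0\arr\stF$.

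Finally, for the Picard groups I note that all the stacks involved are smooth and integral: $\stF$ is, being the irreducible \'etale degree‑$3$ cover of $\stM_{1,1}$ appearing in the proof of \ref{key proposition for the bad locus}, and hence so is $\Bi_\stF G_0$. By \ref{picard of BG} we have $\Pic\Bi_\stF G_0\simeq\Pic\stF\oplus\duale{G_0}$, and by \ref{gm-torsors and picard} there is an exact sequence $\Z\arrdi{\shN}\Pic\Bi_\stF G_0\arr\Pic\stB_{1,1,2}\arr0$. An elementary diagram chase shows that the composite $\Pic\stF\hookrightarrow\Pic\Bi_\stF G_0\twoheadrightarrow\Pic\stB_{1,1,2}$ is then an isomorphism, provided $\duale{G_0}$ is infinite cyclic and the $\duale{G_0}$‑component of $[\shN]$ generates it. For this I would use the exact sequence $0\arr\Gm\arr G_0\arr\E\arr0$ of \ref{properties of Gzero}, with $\E\arr\stF$ the universal curve: since $\E$ is proper with geometrically connected fibres, $\Homsh_\stF(\E,\Gm)=0$, so restriction to the central $\Gm$ embeds $\duale{G_0}\hookrightarrow\duale{\Gm}\simeq\Z$. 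The central $\Gm$ scales $\shL$ with weight $1$, hence acts on $\shN=\pi_*\shL^2$ with weight $2$, so $2\Z$ lies in the image; conversely a character of weight $1$ would split the extension, that is would split, over each geometric point, the theta group of the degree‑zero sheaf $\odi E(\sigma_2-\sigma_1)$, forcing this sheaf to be trivial, which is impossible over $\stF$. Hence the image is exactly $2\Z$ and $[\shN]$ generates $\duale{G_0}$.

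The delicate point is exactly this last step: the inclusion $2\Z\subseteq$ image is immediate from $\shN$, but excluding a weight‑one character of $G_0$ amounts to the non‑splitting of the theta group of the nontrivial $2$‑torsion line bundle $\odi E(\sigma_2-\sigma_1)$, which is where being over $\stF$, rather than over all of $\widetilde\stM_{1,2}$ (where $\odi\E(\sigma_2-\sigma_1)$ has infinite order and $\duale{G_0}$ would vanish), is essential.
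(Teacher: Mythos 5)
Your proposal follows the paper's route almost step for step: you identify $\jac_{0,1}\simeq\Bi_{\widetilde\stM_{1,2}}G_0$ via \ref{fundamental map genus one} and \ref{torsors over a stack}, recognize $\stB_{1,1,2}\arr\Bi_\stF G_0$ as the $\Gm$-torsor attached to $\pi_*(\shL^2)$, split $\Pic\Bi_\stF G_0\simeq\Pic\stF\oplus\duale G_0$ by \ref{picard of BG}, kill the characters of the universal curve to embed $\duale G_0$ into $\duale{\mathbb G}_m\simeq\Z$, and observe that the central $\Gm$ acts on $\pi_*(\shL^2)$ with weight $2$, so that everything reduces to excluding a weight-one character. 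Up to that point the argument is correct and is essentially the paper's; your weight count is a compressed form of the paper's explicit functor $\Bi_\stF\Gm\arr\Bi_\stF G_0$, as in \ref{computation of det pi star L to the n}.

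The one real divergence is the exclusion of a weight-one character, and there, as written, you have a gap of justification rather than of structure. You reduce it, correctly, to the statement that for a nontrivial degree-zero sheaf $L=\odi E(\sigma_2-\sigma_1)$ the extension $0\arr\Gm\arr\shG(L)\arr E\arr 0$ does not split, but you give no proof or reference. This does not follow from anything established in the paper, nor from the part of Mumford's paper the authors use: \cite{Mumford1966} is invoked (in \ref{torsion groups are autodual}) for the non-degeneracy of the commutator pairing of the theta group of an \emph{ample} sheaf, while here the commutator pairing is identically trivial ($\shG(L)$ is commutative when $L$ has degree $0$), so that tool gives nothing. The fact you need is true — it is the Weil--Barsotti identification $\Ext^1(E,\Gm)\simeq\Picsh^0_{E}$ together with the identification of $\shG(L)$ with the extension attached to $L$, or, more concretely, a seesaw computation showing that the $\Gm$-torsor $\shG(L)\arr E$ is the complement of the zero section of a line bundle isomorphic to $L^{\pm 1}$ — and this concrete computation is exactly what the paper supplies instead: it computes the invertible sheaf $\shK$ on the universal curve $\widetilde\E$ of $\stF$ underlying the torsor $G_0\arr\widetilde\E$, namely $\sigma_1^*\odi E(\sigma_2)\otimes\sigma_1^*\odi E(\sigma_3)\otimes\sigma_3^*\odi E(-\sigma_2)\otimes\pi_*\omega_\pi$ (via \ref{computation of line bundles on the universal elliptic curve}), and evaluates it on the fiber $(E\times E\arrdi{\pr_2}E,\sigma_1,\sigma_2,\Delta)$, obtaining $\odi E(p_1-p_2)\neq\odi E$. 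To complete your proof you must either cite the Weil--Barsotti statement in this form or reproduce such a computation; note also that a weight-one character yields in particular a section of the $\Gm$-torsor $G_0\arr\widetilde\E$, so non-triviality of the associated line bundle on a single geometric fiber already gives the contradiction, which is the paper's slightly more economical phrasing.
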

\begin{proof}
 Set $\stX=\Bi_\stF G_0$. By \ref{torsors over a stack} and \ref{fundamental map genus one}, we see that $\jac_{0,1}\simeq \Bi_{\widetilde \stM_{1,2}} G_0$.
 In particular $\stX$ can be seen as the closed substack of $\jac_{0,1}$
 of pairs $(E,\shQ)$ such that $(\Picsh^0_E,[\odi E],[\shQ])\in\stF$. By \ref{relation between Bhg and Udtwo}
 we see that the forgetful map $\stB_{1,1,2}\arr\stX$ is a
 $\Gm$-torsor corresponding to the invertible sheaf $\pi_*(\shL^2)$, where $\pi\colon\E\arr\stX$ is the universal curve and $\shL$ is the universal invertible sheaf
 over it. 
 Notice that $\stX$ is smooth because it is an open substack of $\jac_{0,1}$, which is smooth thanks to \ref{smoothness of the universal Jacobian}.
 In particular from \ref{gm-torsors and picard} the pull-back of $\stB_{1,1,2}\arr\stX$ induces an isomorphism
 \[
 \Pic\stB_{1,1,2} \simeq \Pic \stX/\langle \pi_*(\shL^2)\rangle
 \]
 Moreover from \ref{picard of BG}, we have $\Pic\stX=\Pic\stF \oplus \duale G_0$. We are going to show that $\duale G_0\simeq \Z$ and that the
 component of $\pi_*(\shL^2)$ in $\Pic\stX$ with respect to $\duale G_0$ generates this last group. This will imply that the composition of pull-backs
 $\Pic\stF\arr\Pic\stX\arr\Pic\stB_{1,1,2}$ is an isomorphism.
 
 Taking into account \ref{properties of Gzero}, the inclusion $\Gm\arr G_0$ yields a map $\alpha\colon \duale G_0\arr \duale{\mathbb G}_m\simeq \Z$ whose kernel is the
 group of characters of the universal curve $\widetilde \E$ of $\stF$. If $\phi\colon \widetilde \E\arr\Gm$ is such a homomorphism, $\phi^{-1}(1)$
 is a closed substack of $\widetilde \E$
 and, by checking on the geometric fibers, we see that they are topologically equal. Since 
 $\widetilde \E$ is reduced we can conclude that $\phi$ is trivial and therefore
 that $\alpha$ is injective. As in the proof of \ref{computation of det pi star L to the n}, considering the functor
   \[
  \begin{tikzpicture}[xscale=4.5,yscale=-0.6]
    \node (A0_0) at (0, 0) {$\Bi_\stF \Gm$};
    \node (A0_1) at (1, 0) {$\stX$};
    \node (A1_0) at (0, 1) {$(E\arrdi \pi S,\sigma_1,\sigma_2,\shQ)$};
    \node (A1_1) at (1, 1) {$(E,\odi E(\sigma_2 - \sigma_1)\otimes \pi^*\shQ)$};
    \path (A0_0) edge [->]node [auto] {$\scriptstyle{}$} (A0_1);
    \path (A1_0) edge [|->,gray]node [auto] {$\scriptstyle{}$} (A1_1);
  \end{tikzpicture}
  \]
and the expression
\[
\pi_*[(\odi E(\sigma_2-\sigma_1)\otimes \pi^*\shQ)^2]\simeq \pi_*(\odi E(2\sigma_2-2\sigma_1))\otimes \shQ^2
\]
we see that $\pi_*(\shL^2)$ is sent to $2$ by the map $\Pic \stX\simeq \Pic \shF \oplus \duale G_0 \arrdi{\id\oplus \alpha }\Pic \shF \oplus \duale{\mathbb G}_m\arr \duale{\mathbb G}_m=\Z$.
In particular $2\Z\subseteq \Imm\alpha$ and we need to prove that those groups are equal, or, equivalently, that $\alpha$
 is not an isomorphism. Assume by contradiction that $\alpha$ is an isomorphism.
 This exactly means that the map $\Gm\arr G_0$ has a section. Thus also the map $G_0\arr\widetilde\E$ has a section.
 Since this last map is a $\Gm$-torsor, we can rephrase this saying that the invertible sheaf over 
$\widetilde \E$ corresponding to 
$G_0$ is trivial. We are going to compute this sheaf and prove that it is not trivial. Given 
$(E\arrdi \pi S,\sigma_1,\sigma_2,\sigma_3)\in\widetilde \E$ and denoted by
$t\colon (E,\sigma_1)\arr (E,\sigma_1)$ the translation by $\sigma_3$, so that $t(\sigma_1)=\sigma_3$, the invertible sheaf $\shK$ over $\widetilde \E$ corresponding to
$G_0$ is given by the following calculation
\begin{align*}
 \Isosh_E(\odi E(\sigma_2 - \sigma_1),\odi E(t(\sigma_2) - t(\sigma_1)) & \simeq \Isosh_E(\odi E(\sigma_2+\sigma_3-\sigma_1),\odi E(t(\sigma_2))) \\
 & \simeq \Isosh_S(\pi_*\odi E(\sigma_2+\sigma_3-\sigma_1),\odi S) \simeq \duale{\pi_*\odi E(\sigma_2+\sigma_3-\sigma_1)}
\end{align*}
where we have used that $\pi_*\odi E(t(\sigma_2))\simeq \odi S$ by \ref{degree one sheaves and sections}.
Using \ref{computation of line bundles on the universal elliptic curve} twice we also have
\[
\duale{\pi_*\odi E(\sigma_2+\sigma_3-\sigma_1)} \simeq \sigma_1^*\odi E(\sigma_2)\otimes\sigma_1^*\odi E(\sigma_3)\otimes \sigma_3^*\odi E(-\sigma_2)\otimes \pi_*\omega_\pi
\]
Given an elliptic curve $E$ over an algebraically closed field with origin $p_1$ and $p_2\in E[2]-\{p_1\}$, we consider the object
$\chi=(E\times E \arrdi{\pr_2} E,\sigma_1,\sigma_2,\Delta)\in \widetilde \E(E)$ where $\sigma_i=p_i\times \id_E\colon E\arr E\times E$ for $i=1,2$ and 
$\Delta\colon E\arr E\times E$ is the diagonal. Using isomorphism above, the pull-back of $\shK$ to $E$ is given by
\[
\sigma_1^*\odi {E\times E}(\sigma_2)\otimes\sigma_1^*\odi{E\times E}(\Delta)\otimes \Delta^*\odi{E\times E}(-\sigma_2)\otimes \pr_{2*}\omega_{\pr_2}
\simeq \odi E(p_1-p_2)
\]
which is not trivial.
\end{proof}

\begin{prop}\label{picard of F}
 We have $\Pic \stF\simeq \Z/4\Z$, generated by the invertible sheaf $\pi_*\omega_\pi$,
 where $\pi\colon \E\arr \shF$ is the universal curve over $\shF$.
\end{prop}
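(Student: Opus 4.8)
The plan is to realize $\stF$ explicitly as a quotient stack $[U/\Gm]$ and then read off its Picard group from the results on $\Gm$-torsors already proved. Via the identification $\widetilde\stM_{1,2}\simeq\Picsh^0_{\overline\E/\stM_{1,1}}$ of \ref{elliptic curves and pic}, the substack $\widetilde\stM_{1,2}[2]$ consists of triples $(E,\sigma_1,\sigma_2)$ for which $\sigma_2-\sigma_1$ is a section of $E[2]$, and $\stF$ is the open part where this section is nowhere zero; equivalently, since $p\neq 2$, $\stF$ is the stack of pairs $(E,P)$ where $E$ is a genus one curve with origin $\sigma$ and $P$ is a section of $E[2]$ disjoint from $\sigma$. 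First I would write such a pair, \'etale-locally on the base, in Weierstrass form $y^2=x(x^2+ax+b)$ with $P=(0,0)$; the curve is smooth precisely when $b$ and $a^2-4b$ are invertible, and two such forms define the same $(E,P)$ if and only if they differ by a substitution $x\mapsto\lambda^2x$, $y\mapsto\lambda^3y$, i.e. by the scaling $(a,b)\mapsto(\lambda^{-2}a,\lambda^{-4}b)$ with $\lambda\in\Gm$. This produces an equivalence $\stF\simeq[U/\Gm]$, where $U=\Spec k[a,b]_{b(a^2-4b)}\subseteq\A^2$ carries the $\Gm$-action $\lambda\cdot(a,b)=(\lambda^2 a,\lambda^4 b)$; moreover the universal curve $\pi\colon\E\arr\stF$ is the descent of the Weierstrass family over $U$, and $\pi_*\omega_\pi$ is the descent of the free module $\odi U\cdot(dx/y)$, on which $\Gm$ acts through a character of weight $\pm 1$.

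Granting this, $U\arr\stF$ is a $\Gm$-torsor; let $\shN$ be the associated invertible sheaf on $\stF$. By \ref{gm-torsors and picard} there is an exact sequence $\Z\arrdi{\shN}\Pic\stF\arr\Pic U\arr 0$, and since $U$ is the spectrum of a localization of the factorial ring $k[a,b]$ we have $\Pic U=0$; hence $\Pic\stF$ is cyclic, generated by $\shN$. To compute its order, observe that $\shN^{\otimes m}$ is trivial on $\stF$ exactly when $\odi U$ carries a nowhere-vanishing $\Gm$-semi-invariant function of the corresponding weight, i.e. a unit of $\Gamma(U,\odi U)$ of that weight. The units of $\Gamma(U,\odi U)$ are, modulo $k^*$, freely generated by $b$ and $a^2-4b$, both semi-invariant of weight $4$ for the above action, so the subgroup of $\Z$ of attainable weights is exactly $4\Z$. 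Therefore $\shN$ has order $4$ and $\Pic\stF\simeq\Z/4\Z$. Finally $\pi_*\omega_\pi$ corresponds to a character of weight $\pm 1$ and hence generates $\Pic\stF$, proving the statement.

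The step I expect to be the main obstacle is the verification of the presentation $\stF\simeq[U/\Gm]$: one must check that every pair $(E,P)$ over an arbitrary base scheme $S$ becomes, \'etale-locally on $S$, isomorphic to the family $y^2=x(x^2+ax+b)$ with $P=(0,0)$, and that all isomorphisms respecting the origin and the marked $2$-torsion point are precisely the $\Gm$-scalings. This is routine over a field but requires care relative to a base; everything afterwards is a short computation using \ref{gm-torsors and picard} and the explicit units of $\Gamma(U,\odi U)$. As a consistency check, the forgetful map $\stF\arr\stM_{1,1}$ is finite \'etale of degree $3$ and $\pi_*\omega_\pi$ on $\stF$ is the pullback along it of the Hodge bundle of $\stM_{1,1}$, whose order $12$ is indeed a multiple of $4$.
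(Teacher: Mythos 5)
Your argument is correct, and it shares the paper's basic mechanism -- exhibit a $\Gm$-torsor over $\stF$ whose total space has trivial Picard group and apply \ref{gm-torsors and picard} to get cyclicity -- but it departs from the paper's proof in two substantive ways. First, the torsor: the paper pulls back the Weierstrass torsor $\Spec k[a,b]_\Delta\arr\stM_{1,1}$ along the degree $3$ \'etale cover $\rho\colon\stF\arr\stM_{1,1}$, identifies the total space with $\Spec k[a,b,x]_\Delta/(x^3+ax+b)$ (an open subscheme of $\A^2$), and thereby gets for free that the generator is $\pm\pi_*\omega_\pi$; you instead build a presentation $\stF\simeq[U/\Gm]$ adapted to the marked $2$-torsion point, with $U=\Spec k[a,b]_{b(a^2-4b)}$ and the family $y^2=x(x^2+ax+b)$, $P=(0,0)$. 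Second, the order: the paper needs two separate arguments, namely $4\mid r$ by pushing forward along $\rho$ and taking determinants, and $\omega^4\simeq\odi\stF$ via the explicit $2$-torsion sheaf $\shT\simeq\omega^2$ constructed from \ref{computation of line bundles on the universal elliptic curve}; you read the exact order off in one stroke from the weights of the semi-invariant units $b$ and $a^2-4b$ of $\Gamma(U,\odi U)$. Note that this last step uses slightly more than the literal statement of \ref{gm-torsors and picard} (which only gives the exact sequence, not the kernel): you need that a power of the associated bundle is trivial exactly when there is a semi-invariant unit of the corresponding weight, but this follows at once by descending a trivializing section along $U\arr\stF$. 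The price of your route is the step you flag: the relative normal form, i.e.\ that any $(E,P)$ over a base $S$ is, locally on $S$, isomorphic to $y^2=x(x^2+ax+b)$ with $P=(0,0)$ and that isomorphisms preserving the origin and $P$ are exactly the scalings $x\mapsto u^2x$, $y\mapsto u^3y$. This is indeed standard (take a local Weierstrass equation, complete the square since $p\neq 2$, note the $2$-torsion section has vanishing $y$-coordinate and translate it to $x=0$; an isomorphism $x=u^2x'+r$, $y=u^3y'+su^2x'+t$ preserving both normal forms and the marked point forces $r=s=t=0$), and it is invoked at the same level of detail at which the paper itself uses the Weierstrass torsor over $\stM_{1,1}$, so I would not call it a gap; the sign ambiguity in your weight conventions is also harmless, since only the subgroup $4\Z$ of attainable weights and the fact that $\pi_*\omega_\pi$ has weight $\pm1$ matter. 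Your approach buys a shorter, purely ``coordinate'' computation of the order and makes the $\mu_4$ versus $\mu_2$ stabilizer structure of $\stF$ visible, at the cost of verifying a presentation not otherwise needed in the paper; the paper's approach stays entirely within the machinery it has already set up ($\rho$, \ref{Mumford's theorem}, \ref{computation of line bundles on the universal elliptic curve}) and never needs a normal form for the pair $(E,P)$.
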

\begin{proof}
 Let $k$ be the base field and set $U=\Spec R$,
 where $R=k[a,b]_\Delta$ with $\Delta=4a^3+27b^2$ and $\rho\colon \stF\arr\stM_{1,1}$ for the structure map,
 which is an \'etale degree $3$ cover. Since $\car k\nmid 6$, the map $U\arr\stM_{1,1}$ given by the general Weierstrass curve
 \[
 W=\Proj (R[x,y,z]/(f))\arr U\text{ where } f=y^2z-x^3-axz^2-bz^3
 \]
 is a $\Gm$-torsor corresponding
 to a generator $\shK$ of $\Pic \stM_{1,1}$, either $p_*\omega_p$ or its dual, where $p\colon \widetilde \E\arr\stM_{1,1}$ is the universal curve. In particular
 the base change $V=U\times_{\stM_{1,1}} \shF\arr\shF$ is the $\Gm$-torsor corresponding to the pull-back $\rho^*\shK$ and it coincides with
 $W[2]$ minus the zero section. By \cite[Group law algorithm 2.3]{Silverman1986}, $2$-torsion points are obtained modding out by $y$
 and therefore we get
 \[
 V\simeq \Spec k[a,b,x]_\Delta/(x^3+ax+b)
 \]
 In particular $V$ is an open subscheme of $\A^2=\Spec k[a,x]$ and therefore $\Pic V=0$. By \ref{gm-torsors and picard} we can conclude that $\Pic\stF$
 is generated by $\omega=\pi_*\omega_\pi$. This is because, if $\omega'=p_*\omega_p$, then $\rho^*\omega'\simeq \omega$.
 
 We want to prove that the order $r$ of $\omega$ in $\Pic \shF$ is exactly $4$. Since $\omega'$ has order $12$ in $\Pic\stM_{1,1}$
 by \ref{Mumford's theorem}, we have
 \[
 \omega^r\simeq \rho^*\omega'^r\simeq \odi \shF \then \rho_*\odi \shF\simeq \omega'^r\otimes \rho_*\odi\shF 
 \stackrel{\det}{\then} \omega'^{3r}\simeq\odi{\stM_{1,1}}\then 12\mid 3r\then 4\mid r
 \]
 Consider now the invertible sheaf $\shT$ on $\shF$ given by
 \[
 \shT\colon (E\arrdi q S,\sigma_1,\sigma_2)\longmapsto \sigma_2^*\odi E(\sigma_2-\sigma_1) \otimes \sigma_1^*\odi E(\sigma_1-\sigma_2)
 \]
 Since $\odi E(\sigma_2-\sigma_1)^2 \simeq q^*q_* (\odi E(\sigma_2-\sigma_1)^2) $ 
 by definition of $\shF$ and \ref{cbs for elliptic curves}, we see that $\shT^2\simeq \odi \shF$. On the other hand,
 since $\sigma_1$ and $\sigma_2$ are disjoint, we have $\sigma_1^*\odi E(\sigma_2)\simeq\sigma_2^*\odi E(\sigma_1)\simeq \odi E$ and therefore
 $\shT\simeq \omega^2$ thanks to \ref{computation of line bundles on the universal elliptic curve}. In conclusion $\odi\stF\simeq \shT^2\simeq \omega^4$ and therefore
 $r=4$.
\end{proof}

\begin{proof}[(Proof of Theorem \ref{main theorem}, the case of $\stB_{1,1,2}$)]
By \ref{relation between Bhg and Udtwo} $\shL$ is a degree $0$ invertible sheaf on $\shC$ which is never trivial on the geometric fibers of $\shC\arr \stB_{1,1,2}$. By Grauert we can conclude that $\pi_*\shL=0$, so that  $\det \pi_*\shL$ is trivial. The result then follows from \ref{picard of F is picard of Bone} and \ref{picard of F}.
\end{proof}

\bibliography{biblio}{}

\begin{thebibliography}{FvdG04}

\bibitem[AV04]{Arsie2004}
Alessandro Arsie and Angelo Vistoli.
\newblock {Stacks of cyclic covers of projective spaces}.
\newblock {\em Compositio Mathematica}, 140(3):647--666, May 2004.

\bibitem[BV12]{Bolognesi2009}
Michele Bolognesi and Angelo Vistoli.
\newblock {Stacks of trigonal curves}.
\newblock {\em Transactions of the American Mathematical Society},
  364:3365--3393, 2012.

\bibitem[FO10]{Fulton2010}
William Fulton and Martin Olsson.
\newblock {The Picard group of $M_{1,1}$}.
\newblock {\em Algebra and Number Theory}, 4:87--104, April 2010.

\bibitem[FvdG04]{Faber2004}
Carel Faber and Gerard van~der Geer.
\newblock {Complete subvarieties of moduli spaces and the Prym map}.
\newblock {\em Journal f\"{u}r die reine und angewandte Mathematik (Crelles
  Journal)}, 2004(573), January 2004.

\bibitem[Gro66]{EGAIV-3}
Alexander Grothendieck.
\newblock {\em {EGAIV-3 - \'{E}tude locale des sch\'{e}mas et des morphismes de
  sch\'{e}mas (Troisi\'{e}m partie) - \'{E}l\'{e}ments de g\'{e}om\'{e}trie
  alg\'{e}brique (r\'{e}dig\'{e}s avec la collaboration de Jean
  Dieudonn\'{e})}}.
\newblock Inst. Hautes \'{E}tudes Sci. Publ. Math. 28, 1966.

\bibitem[Gro67]{EGAIV-4}
Alexander Grothendieck.
\newblock {\em {EGAIV-4 - \'{E}tude locale des sch\'{e}mas et des morphismes de
  sch\'{e}mas (Quatri\`{e}me partie) - \'{E}l\'{e}ments de g\'{e}om\'{e}trie
  alg\'{e}brique (r\'{e}dig\'{e}s avec la collaboration de Jean
  Dieudonn\'{e})}}.
\newblock Inst. Hautes \'{E}tudes Sci. Publ. Math. 32, 1967.

\bibitem[Kle80]{Kleiman1980}
Steven~L. Kleiman.
\newblock {Relative Duality for Quasi-Coherent Sheaves}.
\newblock {\em Compositio Mathematica}, 41:39 -- 60, 1980.

\bibitem[MBL99]{Laumon1999}
Laurent Moret-Bailly and Gerard Laumon.
\newblock {\em {Champs alg\'{e}briques}}.
\newblock Springer, first edition, 1999.

\bibitem[Mil08]{Milne2008}
James~S. Milne.
\newblock {\em {Abelian Varieties}}.
\newblock Online lecture notes, 2008.

\bibitem[Mum63]{Mumford1963}
David Mumford.
\newblock {Picard groups of moduli problems}.
\newblock {\em Arithmetical Algebraic Geometry}, pages 33--81, 1963.

\bibitem[Mum66]{Mumford1966}
David Mumford.
\newblock {On the equations defining abelian varieties. I}.
\newblock {\em Inventiones Mathematicae}, 1(4):287--354, December 1966.

\bibitem[MV14]{melo-viviani}
Margarida Melo and Filippo Viviani.
\newblock {The Picard group of the compactified universal Jacobian}.
\newblock {\em Documenta Mathematica}, 19:457--506, July 2014.

\bibitem[Pag13]{Pagani2013}
Nicola Pagani.
\newblock {Moduli of abelian covers of elliptic curves}.
\newblock {\em arxiv id: 1303.2991}, page~24, March 2013.

\bibitem[Sil86]{Silverman1986}
Joseph~H. Silverman.
\newblock {\em {The Arithmetic of Elliptic Curves}}.
\newblock 1986.

\bibitem[SP014]{SP014}
{Stacks Project}, 2014.

\end{thebibliography}
\bibliographystyle{alpha}

\end{document}